\newtheorem{theorem}{Theorem}
\newtheorem{claim}{Claim}
\newtheorem{corollary}[theorem]{Corollary}
\newtheorem{definition}[theorem]{Definition}
\newtheorem{lemma}[theorem]{Lemma}
\newtheorem{proposition}[theorem]{Proposition}
\newtheorem{remark}[theorem]{Remark}
\newenvironment{proof}[1][Proof]{\noindent\textbf{#1.} }{\hfill \rule{0.5em}{0.5em}}
\newcommand{\Ocal}{\mathcal O}
\begin{document}

\title{On the degree-$1$ Abel map for nodal curves}
\author{Aldi Nestor de Souza\\{\scriptsize aldinestor@cpd.ufmt.br}
\and Frederico Sercio\thanks{Supported by
CNPq, Proc. 142165/2010-7.}\\{\scriptsize fred.feitosa@ufjf.edu.br}}
\maketitle

\begin{abstract}
\noindent Let $C$ be a nodal curve and $L$ be an invertible sheaf on $C$. Let
$\alpha_{L}:C\dashrightarrow J_{C}$ be the degree-$1$ rational Abel map, which
takes a smooth point $Q\in C$ to $\left[  m_{Q}\otimes L\right]  $ in the
Jacobian of $C$. In this work we extend $\alpha_{L}$ to a morphism
$\overline{\alpha}_{L}:C\rightarrow\overline{J}_{E}^{P}$ taking values on
Esteves' compactified Jacobian for any given polarization $E$. The maps
$\overline{\alpha}_{L}$ are limits of Abel maps of smooth curves of the type
$\alpha_{L}$.
\end{abstract}

\bigskip

\textbf{Keywords.} Abel map, nodal curves.

\bigskip

\textbf{Mathematics Subject Classification (MSC).} 14H50


\section{Introduction}

Let $C$ be a nodal curve defined over an algebraically closed field $k$. Let $J_{C}$ be the Jacobian of $C$ and $L$ be an invertible sheaf on $C$. The aim
of this article is to construct a resolution of the rational Abel map%
\[
\alpha_{L}:%
\begin{array}
[t]{ccl}%
C & \dashrightarrow & J_{C}\\
Q & \mapsto & \left[  m_{Q}\otimes L\right]  ,
\end{array}
\]
where $m_{Q}$ is the ideal sheaf of the point $Q$. If $Q\in C$ is smooth, then
$\alpha_{L}(Q)$ is well-defined but if $Q\in C$ is a node, then $\alpha
_{L}(Q)$ is not defined because $m_{Q}$ is noninvertible.

In order to solve that map, that is, to extend the Abel map on the whole $C$%
, we must enlarge the target space of $\alpha_{L}$, which leads us to the problem of how to find a good compactification for the Jacobian. The problem of the compactification goes back at least to Igusa \cite{I}. Later on D'Souza, following a suggestion of Mumford and Mayer, obtained in his thesis \cite{DS} a compactification of relative Jacobian of a family of irreducible curves with nodes and cusps as singularities under somewhat restrictive hypothesis. One year later, Altman and Kleiman \cite{AK80} gave a good solution for the case of families of geometrically integral curves. Their relative compactification parametrizes torsion-free and rank-$1$ sheaves on the fibers, and it admits a universal sheaf after an \'{e}tale base change.

For reducible nodal curves Oda-Seshadri \cite{OS} and Seshadri \cite{S} produced some compactifications. But these compactifications are not applicable to families of reduced curves. In her thesis \cite{C}, Caporaso constructed a compactification for the relative generalized Jacobians of families of stable curves. One year later, Pandharipande \cite{PP} produced in his thesis an equivalent construction, valid for higher ranks as well. At nearly the same time, Simpson \cite{Si} constructed moduli spaces of coherent sheaves over any family of projective varieties. The compactifications by Caporaso, Pandharipande and Simpson are not fine moduli spaces, and thus do not carry a Poincar\'{e} sheaf.

At last, Esteves considered in \cite{E01} the algebraic space constructed by Altman and Kleiman \cite{AK80}, parametrizing torsion-free rank-$1$ simple sheaves on the fibers of a family of curves and he showed that this space is universally closed over the base, and consequently one can regard it as a compactification of the relative Jacobian. This compactification is a fine moduli space, and hence it does admit a Poincar\'{e} sheaf after an \'{e}tale base change. In this work we consider Abel maps into Esteves' compactification.

We recall that the map $\alpha_{L}$ above is the case $d=1$ of the more
general rational degree-$d$ Abel map $\alpha_{L}^{d}$ defined, for a positive
integer $d$ and a line bundle $L$, in the following way:%
\[
\alpha_{L}^{d}:%
\begin{array}
[t]{ccl}%
C^{d} & \dashrightarrow & J_{C}\\
(Q_{1},\cdots,Q_{d}) & \mapsto & \left[  m_{Q_{1}}\otimes\cdots\otimes
m_{Q_{d}}\otimes L\right]  \text{.}%
\end{array}
\]
When $C$ is smooth, $\alpha_{L}^{d}$ is a morphism and a well-know result of
Abel says that the fibers of Abel map of degree $d$ are projectivized complete
linear series (up to the natural action of the $d$-th symmetric group). So,
when $C$ is smooth, all the possible embeddings of $C$ in projective spaces
are known once we know its Abel maps.

Some particular cases of the Abel maps have been solved. Altman and Kleiman in
\cite{AK80} considered the problem for integral curves. For reducible curves,
the problem of completing the Abel maps is open with a few exceptions:
Caporaso and Esteves in \cite{CE} constructed degree-$1$ Abel maps for stable
curves; Caporaso, Coelho and Esteves in \cite{CCoE} constructed a degree-$1$ Abel maps for Gorenstein curves; Coelho and Pacini in \cite{CoP} constructed
Abel maps of any degree for curves of compact type; Pacini in \cite{P1} and \cite{P2} constructed a degree-$2$ Abel maps for nodal curves and finally Abreu, Coelho and Pacini in \cite{ACoP} constructed degree-$d$ Abel maps for nodal curves with two components. In all cases the authors have been used a specific polarization and a particular $L$.

The general strategy to solve the Abel maps is to resort to families of
curves. More precisely, let $C$ be a nodal curve of genus $g$ and 
$f:\mathcal{C}\rightarrow B$ be a regular local smoothing of $C$, i.e., a
family of curves where $\mathcal{C}$ is smooth and $B$ is the spectrum
of a DVR (discrete valuation ring) with residue field $k$ and quotient field
$K$, and such that $f$ has special fiber isomorphic to $C$ and smooth generic
fiber $\mathcal{C}_{K}$. Let $\sigma:B\rightarrow\mathcal{C}$ be a section of
$f$ through the $B$-smooth locus of $\mathcal{C}$. Let $\mathcal{E}$ be a polarization on $\mathcal{C}$, i. e., a vector bundle on $\mathcal{C}$ such that rk($\mathcal{E}$) divides deg($\mathcal{E}$). Let $L$ be a line bundle on $C$ of degree $g-\mu(\mathcal{E})$, and consider a deformation $\mathcal{L}$ of $L$, i. e., an invertible sheaf $\mathcal{L}$ on $\mathcal{C}$ such that $\left.\mathcal{L}\right\vert _C=L$.

Consider the scheme $\overline{J}_{\mathcal{E}}^{\sigma}$ 
constructed in \cite{E01}, parametrizing torsion-free rank-$1$ sheaves $I$ of degree ($g-1-\mu(\mathcal{E})$) on $\mathcal{C}/B$ that are $\sigma$-quasistable with respect to $\mathcal{E}$. This means that $I$ satisfies certain numerical conditions depending on the degree of $I$ in each component of $C$. We recall that $\overline
{J}_{\mathcal{E}}^{\sigma}$ is a proper $B$-scheme.

Given $\mathcal{L}$ and $\mathcal{E}$ as above, we have a rational map
\[
\alpha_{\mathcal{L},\mathcal{E}}:%
\begin{array}
[t]{ccl}%
\mathcal{C} & \dashrightarrow & \overline{J}_{\mathcal{E}}^{\sigma}%
\end{array}
\]
defined over $\mathcal{C}_K$ which takes $Q\in\mathcal{C}_{K}$ to $\alpha_{\mathcal{L},\mathcal{E}}(Q)=\left[
m_{Q}\otimes \left.\mathcal{L}\right\vert _{\mathcal{C}_K}\right]  $.
Our aim is to extend this map to the whole $\mathcal{C}$.
Since $\overline{J}_{\mathcal{E}}^{\sigma}$ is a fine moduli space, to extend
the map $\alpha_{\mathcal{L},\mathcal{E}}$ to the whole $\mathcal{C}$ it is enough to
give a relatively torsion-free rank-$1$ $\sigma$-quasistable sheaf $\mathcal{M}$ on the family 
\[
p_1:\mathcal{C}\times_{B}\mathcal{C}\rightarrow \mathcal{C}
\]
given by the projection map $p_1$ onto the first factor. The moduli map induced by $\mathcal{M}$ is given by its restriction over the fibers of the family $p_1$.

As we will see in Theorem \ref{teo:main}, for any invertible sheaf $\mathcal{L}$ and for any polarization $\mathcal{E}$ we can show that $\alpha_{\mathcal{L},\mathcal{E}}:\mathcal{C} \dashrightarrow \overline{J}_{\mathcal{E}}^{\sigma}$
is already a morphism. More precisely, let $\phi: \widetilde{\mathcal{C}}^2\rightarrow \mathcal{C}^2$ be a good partial desingularization. We will be able to construct an invertible sheaf $\widetilde{\mathcal{F}}$ over $\widetilde{\mathcal{C}}^2$, such that $\phi_*\widetilde{\mathcal{F}}$ is a relatively torsion-free, rank-$1$, $\widetilde{\sigma}$-quasistable sheaf over $\mathcal{C}^2$, where $\widetilde{\sigma}:\mathcal{C}\rightarrow \mathcal{C}^2$ is the section of the projection $\mathcal{C}^2\rightarrow\mathcal{C}$ onto the second factor induced by the section $\sigma:B\rightarrow \mathcal{C}$. This sheaf induces a morphism  
\[
\overline{\alpha}_{\mathcal{L},\mathcal{E}}:\mathcal{C}\rightarrow\overline{J}_{\mathcal{E}}^{\sigma}
\] 
which takes $Q \in \mathcal{C}$ to 
\[
\left. \phi_*\widetilde{\mathcal{F}} \right\vert _{p_1^{-1}(Q)}
\]
restricting to $\alpha_{\mathcal{L},\mathcal{E}}$ over the smooth locus of $f:\mathcal{C}\rightarrow B$.

We recall that in \cite{CCoE}, \cite{CE} and \cite{Co} the authors have been
used a particular polarization and the particular $L=\mathcal{O}_{C}(P)$. Notice that we use the approach used by Caporaso and Esteves in \cite{CE} where the obstruction to extend the Abel map is overcome by using a special type of invertible sheaves, named twisters. Rocha, in his thesis \cite{R} constructed degree-$1$ and degree-$0$ Abel maps avoiding the use of twisters, putting Simpson's compactified Jacobians as the target of Abel maps.

In short, here is a summary of this article. In Section 2 we review the
technical background, in particular the concepts of
$P$-quasistability and $\sigma$-quasistability. In Sections 3 and 4 we define
twisters and twister difference. In Section 5 we construct the sheaf
$\mathcal{M}$ on $\mathcal{C}\times_{B}\mathcal{C}/\mathcal{C}$ which solves
the first Abel map.

\section{Technical background}

Let $k$ be an algebraically closed field. A \textit{curve} is a connected,
projective and reduced scheme of dimension $1$ over $K$. A \textit{subcurve} $Y$ of a curve $C$ is a reduced subscheme of pure dimension $1$, or equivalently, a reduced
union of irreducible components of $C$. A \textit{node} of a curve $C$ is a singular point $N$ of $C$ such that $\widehat{\Ocal}_{C,N}=k[[x,y]]/(xy)$. A node $N$ of $C$ is called a \textit{separating node} if there is a subcurve $Y$ of $C$ such that $Y\cap Y'=\{N\}$, where $Y'=\overline{C\setminus Y}$. A node $N$ of $C$ is called \textit{reducible} (or external) if there are $C_i$ and $C_j$, distinct irreducible components of $C$, such that $N \in C_i \cap C_j$, otherwise, $N$ is called an \textit{irreducible} (or internal) node. A \textit{nodal curve} is a curve with only nodal singularities.  We denote by $C^{\text{sing}}$ and $C^{\text{sm}}$ respectively the singular and smooth locus of $C$. 

If $Y,Z\subseteq C$ are subcurves, we
let $Y\wedge Z$ denote the maximum subcurve of $C$ contained in $Y\cap Z$; we
let $\overline{Z-Y}$ denote the minimum subcurve containing $Z\setminus Y$. If
$Y,Z\subseteq C$ are subcurves such that $\dim\left(  Y\cap Z\right)  =0$,
define $\delta_{YZ}$ as the number of nodes in $Y\cap Z$. In addition, if
$Y=C_{i}$ and $Z=C_{j}$ are irreducible components of a curve $C$ we use
$\delta_{i,j}$ to denote $\delta_{C_{i}C_{j}}$.

A \textit{chain of rational curves} is a curve whose components are smooth and rational and can be ordered, $E_1,...,E_n$, in such a way that $\#(E_i \cap E_{i+1})=1$ for $i=1,...,n-1$ and $E_i \cap E_j = \emptyset$ if $|i-j| > 1$. If $n$ is the number of components, we say that the chain has \textit{length} $n$. The components $E_1$ and $E_n$ are called the \textit{extreme curves} of the chain. 

Let $\mathcal{N}$ be a collection of nodes of $C$, and $\eta : \mathcal{N} \rightarrow \mathbb{N}$ a function. Denote by $\widetilde{C}_{\mathcal{N}}$ the partial normalization of $C$ at $\mathcal{N}$. For each $P \in \mathcal{N}$, let $E_P$ be a chain of rational curves of length $\eta(P)$. Let $C_{\eta}$ denote the curve obtained as the union of $\widetilde{C}_{\mathcal{N}}$ and the $E_P$ for $P \in \mathcal{N}$ in the following way: each chain $E_P$ intersects no other chain, but intersects $\widetilde{C}_{\mathcal{N}}$ transversally at two points, the branches over $P$ on $\widetilde{C}_{\mathcal{N}}$ on one hand, and nonsingular points on each of the two extreme curves of $E_P$ on the other hand. There is a natural map $\mu_{\eta} : C_{\eta} \rightarrow C$ collapsing each chain $E_P$ to a point, whose restriction to $\widetilde{C}_{\mathcal{N}}$ is the partial normalization map. The curve $C_{\eta}$ and the map $\mu_{\eta}$ are well-defined up to $C$-isomorphism. A rational curve in any of the chain $E_P$ is called $\mu_{\eta}$\textit{-exceptional curve}. We call the curve $C_{\eta}$ (or the map $\mu_{\eta}$), a \textit{semistable modification of} $C$. If $\eta(P)=1$ for each $P \in \mathcal{N}$, then $C_{\eta}$ (or the map $\mu_{\eta}$) is called a \textit{quasistable modification of} $C$.

There are two special cases of the above construction that will be useful for us. First, if $\mathcal{N}=\{R\}$ and $\eta$ takes $R$ to $1$, let $C_R := C_{\eta}$ and $\mu_R := \mu_{\eta}$. Second, if $\mathcal{N}=\mathcal{N}(C)$, where $\mathcal{N}(C)$ is the collection of reducible nodes of $C$, and $\mu : \mathcal{N}(C) \rightarrow \mathbb{N}$ is the constant function with value $m$, let $C(m) := C_{\eta}$ and $\mu(m) := \mu_{\eta}$. Set $C(0) := C$ and $\mu(0) := \text{id}_C$.

Given a map of curves $\phi:C^{\prime}\rightarrow C$, we say that an
irreducible component of $C^{\prime}$ is $\phi$-\textit{exceptional} if it is
a smooth rational curve and is contracted by the map.

Let $I$ be a coherent sheaf on a curve $C$. We say that $I$ is
\textit{torsion-free} if its associated points are generic points of $C$. We
say that $I$ is \textit{of rank }$1$ if $I$ is invertible on a dense open
subset of $C$. If $I$ is a rank-$1$ torsion-free sheaf, we call $\deg
(I):=\chi(I)-\chi\left(  \mathcal{O}_{C}\right)  $ the \textit{degree} of $I$
and we define 
\[
I_{Y}:=\frac{\left.  I\right\vert _{Y}}{\mathcal{T}(\left.
I\right\vert _{Y})},
\]
where $\mathcal{T}\left(  \left.  I\right\vert
_{Y}\right)  $ is the torsion subsheaf of $\left.  I\right\vert _{Y}$. Note
that $I_{Y}$ is torsion-free. A sheaf $I$ is said to be \textit{simple} if
End$(I)=k$, or equivalently, if $I$ is not decomposable \cite[Lemma 1.1.5, p.
11]{Co}.

Let $E$ be a vector bundle on a curve $C$. The \textit{slope} of $E$ is the
number 
\[
\mu(E):=\frac{\deg(E)}{\text{rk}(E)}.
\]
A \textit{polarization} on a
curve $C$, in the sense of Esteves \cite{E01}, is a vector bundle $E$ on $C $
whose slope is an integer, that is, such that rk$(E)$ divides $\deg(E)$. A
torsion-free rank-$1$ sheaf $I$ on a curve $C$ is \textit{semistable} with
respect to $E$ if $\chi(E\otimes I)=0$ and $\chi(E\otimes I_{Y})\geq0 $, for
all proper subcurves $Y$ of $C$. If $P$ is a nonsingular point of $C $, we say
that a torsion-free rank-$1$ sheaf $I$ is $P$-\textit{quasistable} with
respect to $E$ if $I$ is semistable and in addition $\chi(E\otimes I_{Y})>0$
for every proper subcurve $Y$ of $C$ containing $P$. Let%
\[
\beta_{Y}(I):=\chi(I_{Y})+\frac{\deg_{Y}(E)}{\text{rk}(E)}\text{.}%
\]
As $\chi(E\otimes I_{Y})=\,$rk$(E)\chi(I_{Y})+\deg_{Y}(E)$, the sheaf $I$ is
$P$-quasistable if and only if $\beta_{Y}(I)\geq0$ for every proper subcurve
$Y$ of $X$ and $\beta_{Y}(I)>0$ for every subcurve $Y$ such that $P\in Y$.

A \textit{family of (connected) curves} is a proper and flat morphism $f:\mathcal{C}%
\rightarrow B$ whose geometric fibers are connected curves. If $b\in B$, we denote by
$\mathcal{C}_{b}:=f^{-1}(b)$ its fiber. The family $f:\mathcal{C}\rightarrow
B$ is called \textit{local} if $B=\,$Spec$\left(  K\left[  \left[  t\right]
\right]  \right)  $, \textit{regular} if $\mathcal{C}$ is regular and
\textit{pointed} if it is endowed with a section $\sigma:B\rightarrow
\mathcal{C}$ through the smooth locus of $f$. A \textit{smoothing} of a curve
$C$ is a regular local family $f:\mathcal{C}\rightarrow B$ with special fiber
$C$. A \textit{sheaf} on $\mathcal{C}/B$ is a $B$-flat coherent sheaf on $\mathcal{C}$. Given a pointed smoothing $f:\mathcal{C}\rightarrow B$ of a curve $C$
with section $\sigma:B\rightarrow\mathcal{C}$, we define $P:=\sigma(0)$. If
$f:\mathcal{C}\rightarrow B$ is a family of curves, we denote by
$\mathcal{C}^{2}$ the product $\mathcal{C}\times_{B}\mathcal{C}$ and by
$\mathcal{C}_{T}$ the product $\mathcal{C}\times_{B}T$ where $T$ is a $B$-scheme.

Let $f:\mathcal{C} \rightarrow B$ be a family of curves and let $\mathcal{I}$ be a sheaf on $\mathcal{C}/B$. The sheaf $\mathcal{I}$ is called \textit{torsion-free} (resp. \textit{rank}-$1$ and \textit{simple}) on $\mathcal{C}/B$ if, for each $b \in B$, the restriction $\mathcal{I}(b)$ is torsion-free (resp. rank-$1$ and simple).

Let $f:\mathcal{C}\rightarrow B$ be a smoothing of $C$. The \textit{relative
compactified Jacobian functor} of the family $\mathcal{C}/B$ is the
contravariant functor%
\[
\overline{\mathcal{J}}_{\mathcal{C}/B}:\left(  \text{Sch/}B\right)  ^{\circ
}\longrightarrow\left(  \text{Sets}\right)
\]
that associates to each $B$-scheme $T$ the set of simple, torsion-free,
rank-$1$ sheaves on $\mathcal{C}_{T}/T$ modulo equivalence, where we say that
two sheaves $\mathcal{F}_{1}$ and $\mathcal{F}_{2}$ on $\mathcal{C}_{T}/T$ are
equivalent if there exists an invertible sheaf $\mathcal{G}$ on $T$ such that
$\mathcal{F}_{1}\simeq\mathcal{F}_{2}\otimes p_{2}^{\ast}\left(
\mathcal{G}\right)  $, with $p_{2}:\mathcal{C}_{T}\rightarrow T$ being the
projection onto the second factor.

Since $f:\mathcal{C}\rightarrow B$ is a family of curves, the functor
$\overline{\mathcal{J}}_{\mathcal{C}/B}$ is represented by an
\textit{algebraic space} $\overline{J}_{\mathcal{C}/B}$ \cite[Theorem 7.4, p.
99]{AK80}. Esteves showed that, after a suitable \'{e}tale base change to
obtain enough sections \cite[Lemma 18, p. 3061]{E01}, $\overline
{J}_{\mathcal{C}/B}$ becomes a \textit{scheme }\cite[Theorem B, p. 3048]{E01},
consequently it is a fine moduli space. But the space $\overline
{J}_{\mathcal{C}/B}$ is not proper.

Let $f:\mathcal{C}\rightarrow B$ be a family of curves and consider a vector
bundle $\mathcal{E}$ on $\mathcal{C}$. The \textit{relative degree of
}$\mathcal{E}/B$ is $\deg(\mathcal{E}/B):=\deg_{\mathcal{C}(b)}(\det(\left.
\mathcal{E}\right\vert _{\mathcal{C}(b)}))$, where $\mathcal{C}(b)$ is the
fiber of $\mathcal{C}/B$ over any $b\in B$. The quotient 
\[
\mu(\mathcal{E}):=\frac{\text{deg}(\mathcal{E}/B)}{\text{rk}(\mathcal{E})}
\]
is called the \textit{slope} of $\mathcal{E}$. We say $\mathcal{E}$ is a \textit{polarization} if rk$(\mathcal{E})$ divides $\deg(\mathcal{E}/B)$ (or
equivalently $\mu(\mathcal{E})$ is an integer). Let $\sigma:B\rightarrow
\mathcal{C}$ be a section of $f$ through the smooth locus of $\mathcal{C}$. A
torsion-free rank-$1$ sheaf $\mathcal{I}$ on $\mathcal{C}/B$ is $\sigma
$-quasistable with respect to $\mathcal{E}$ if $\mathcal{I}(b)$ is $\sigma
(b)$-quasistable with respect to $\mathcal{E}(b)$ for every geometric point
$b$ of $B$. According to \cite[Lemma 1.3.5, p. 19]{Co}, if $\mathcal{I}$ is
$\sigma$-quasistable, then $\mathcal{I}$ is simple on $\mathcal{C}/B$.

Denote by $\overline{J}_{\mathcal{E}}^{\sigma}$ the subspace of $\overline
{J}_{\mathcal{C}/B}$ parametrizing the torsion-free rank-$1$ sheaves
$\mathcal{I}$ on $\mathcal{C}/B$ that are $\sigma$-quasistable with respect to
$\mathcal{E}$. Esteves showed that $\overline{J}_{\mathcal{E}}^{\sigma}$ is
proper over $B$ \cite[Theorem A, p. 3047]{E01}.

Since that $\overline{J}_{\mathcal{E}}^{\sigma}$ is a fine moduli space, to
give a morphism $\alpha:\mathcal{C}\rightarrow\overline{J}_{\mathcal{E}}^{\sigma}$
is equivalent to give a simple, torsion-free, rank-$1$ sheaf $\mathcal{M}$ on
$\mathcal{C\times}_{B}\mathcal{C}/\mathcal{C}$ which is $\sigma$-quasistable
on the fibers. Given $\mathcal{L}$ an invertible sheaf on $\mathcal{C}/B$ and
$\mathcal{E}$ a polarization on $\mathcal{C}$, we have a rational map%
\[
\alpha_{\mathcal{L}}:%
\begin{array}
[t]{ccl}%
\mathcal{C} & \dashrightarrow & \overline{J}_{\mathcal{E}}^{\sigma},
\end{array}
\]
where $\alpha_{\mathcal{L}}(Q)=\left[  m_{Q}\otimes\mathcal{L}_{K}\right]  $
for $Q\in\mathcal{C}_{K}$, $\mathcal{C}_{K}$ is the generic fiber of
$f:\mathcal{C}\rightarrow B$ and $\mathcal{L}_{K}=\left.  \mathcal{L}%
\right\vert _{\mathcal{C}_{K}}$. So, to extend this map it suffices to give a
simple, torsion-free, rank-$1$ sheaf $\mathcal{M}$ on $\mathcal{C\times}%
_{B}\mathcal{C}/\mathcal{C}$ which is $\sigma$-quasistable on the fibers, such
that for $Q\in\mathcal{C}_{K},$%
\[
\left.  \mathcal{M}\right\vert _{\mathcal{C\times}_{B}\{Q\}}=m_{Q}%
\otimes\left.  \mathcal{L}\right\vert _{\mathcal{C}_{K}}.
\]

Given a smoothing $\mathcal{C}/B$ of $C$, a \textit{twister} of $\mathcal{C}%
/B$ is a line bundle of degree-$0$ on $C$ of the form $\left.  \mathcal{O}%
_{\mathcal{C}}(Z)\right\vert _{C}$, where $Z$ is a Cartier divisor of
$\mathcal{C}$ supported in $C$, so a formal sum of components of $C$ (each component of $C$ is a Cartier divisor of $\mathcal{C}$ because $\mathcal{C}$ is regular) of the
type $\sum a_{i}C_{i}$ where $a_{i}\in\mathbb{Z}$. We set $\mathcal{O}%
_{C}(Z):=\left.  \mathcal{O}_{\mathcal{C}}(Z)\right\vert _{C}$.


\section{Nodal curves and twisters}


Let $C$ be a connected, nodal curve with components $C_{1},...,C_{p}$. Let $f:\mathcal{C}\rightarrow B$ be a regular local smoothing of $C$. Let $\sigma:B\rightarrow\mathcal{C}$ be a section of
$f$ through the $B$-smooth locus of $\mathcal{C}$. Let $\mathcal{E}$ be a polarization on $\mathcal{C}$. Let $L$ be a line bundle on $C$, and consider a deformation $\mathcal{L}$ of $L$, i. e., an invertible sheaf $\mathcal{L}$ on $\mathcal{C}/B$ such that $\left.\mathcal{L}\right\vert _C=L$.

Let $\phi:\widetilde{\mathcal{C}^{2}}\rightarrow\mathcal{C}^{2}$ be a desingularization of $\mathcal{C}^{2}$. Let $\Delta
\subset\mathcal{C}^{2}$ be the diagonal subscheme and let $\widetilde{\Delta}$
be the strict transform of $\Delta$ (via $\phi$). Denoting by $p_2:\mathcal{C}^2 \rightarrow \mathcal{C}$ the second projection, we obtain a family of curves
\begin{displaymath}
\xymatrix{
\widetilde{\mathcal{C}^{2}} \ar[r]^{\phi} & \mathcal{C}^{2} \ar[r]^{p_2} & \mathcal{C} }
\end{displaymath}

Consider the following sheaf over the family $\widetilde{\mathcal{C}^{2}}/\mathcal{C}$
\begin{equation}
\widetilde{\mathcal{M}}:=\mathcal{I}_{\widetilde{\Delta}/\widetilde{\mathcal{C}^{2}}%
}\otimes\left(  p_{2}\phi\right)  ^{\ast}\mathcal{L}\otimes\widetilde
{\mathcal{T}},\label{eqn1}%
\end{equation}
where $\widetilde{\mathcal{T}}$ is an invertible sheaf. 

We want to find a desingularization $\phi:\widetilde{\mathcal{C}^{2}}\rightarrow\mathcal{C}^{2}$ so that $\phi_*\widetilde{\mathcal{M}}$ is a relatively torsion-free, rank-$1$, $\sigma$-quasistable sheaf on $\mathcal{C}^{2}/\mathcal{C}$.

The map 
\[
\overline{\alpha}_{\mathcal{L},\mathcal{E}}:%
\begin{array}
[t]{ccl}%
\mathcal{C} & \rightarrow & \overline{J}_{\mathcal{E}}^{\sigma}
\end{array}
\]
induced by $\phi_*\widetilde{\mathcal{M}}$ coincides with $\alpha_{\mathcal{L},\mathcal{E}}$ over the generic fiber of $f$


\subsection{Good partial desingularizations}


By \cite[Section 3.1]{CoEP}, the $3$-fold $\mathcal{C}^2$ is singular exactly at the points $(R,S)$ where $R,S$ are nodes of $C$, not necessarily distinct, i. e., 
\[
\text{Sing}(\mathcal{C}^{2})=\left\{  (R,S):R,S\in
C^{\text{sing}}\right\}  \text{.}%
\]
In fact, $\mathcal{C}^2$ has a quadratic isolated singularity at $(R,S)$. This singularity can be resolved by blowing up $\mathcal{C}^2$ at $(R,S)$, at the cost of replacing the point by a quadric surface. However, for our purposes, we choose a desingularization that replaces each point $(R,S)$ by a unique smooth rational curve. Thus, firstly we need a convenient desingularization of $\mathcal{C}^2$.
For more details on the subject, we refer the reader to \cite[Sections 3 and 4]{CoEP}.

Let $\phi:\widetilde{\mathcal{C}^{2}}\rightarrow\mathcal{C}^{2}$ be a
\textit{good partial resolution of singularities} of $\mathcal{C}^{2}$ as introduced by \cite[Section 4.1, p.2936]{CoEP}, that
is, $\phi:\widetilde{\mathcal{C}^{2}}\rightarrow\mathcal{C}^{2}$ is a sequence
of blowups, starting by the blowup along the diagonal subscheme of $\mathcal{C}^{2}$
and then blowing up all strict transform of products $Y\times Z$ of irreducible components $Y$ and $Z$ of $C$ with $Y\neq Z$. According to \cite[Section 4.1, p. 2936]{CoEP}, $\widetilde{\mathcal{C}^{2}}$ is nonsingular away from the points over the
pairs $(R,S)$ of distinct nodes $R,S$ of $C$ where either $R$ or $S$ is irreducible, so the strict
transform, via $\phi$, of any product $Y\times Z$ is a Cartier divisor in
$\widetilde{\mathcal{C}^{2}}$.

In this context the family of curves
\begin{displaymath}
\xymatrix{
\widetilde{\mathcal{C}^{2}} \ar[r]^{\phi} & \mathcal{C}^{2} \ar[r]^{p_2} & \mathcal{C} }
\end{displaymath}
looks locally over a node $R \in C$ like the below diagram

\begin{figure}[!h]
\centering
\begin{pspicture}(4,0)(15,10)
\pscurve(9.5,2)(10,4)(9.5,6)
\pscurve(9.5,5)(10,7)(9.5,9)
\put(10.3,4){$C_i$}
\put(10.1,5.3){$R$}
\put(10.3,7){$C_j$}
\pscurve(4.7,2)(5.2,3.5)(4.3,5)
\pscurve(4.3,6)(5.2,7.5)(4.7,9)
\put(5.5,3.5){$\widetilde{C}_i$}
\put(4.8,5.3){$\mathbb{P}^1$}
\put(5.5,7){$\widetilde{C}_j$}
\psline[linecolor=blue](4.6,4)(4.6,6.8)
\put(4.7,9.5){$\widetilde{C}$}
\put(9.7,9.5){$C$}
\put(14,5.3){$R \in C^{\text{sing}} \subset \mathcal{C}$}
\psline{->}(6,5.5)(9,5.5)
\psline{->}(11,5.5)(13,5.5)
\end{pspicture}
\caption{Fiber over $R \in C^{\text{sing}} \subset \mathcal{C}$ of the family $p_2 \circ \phi$.}
\end{figure}
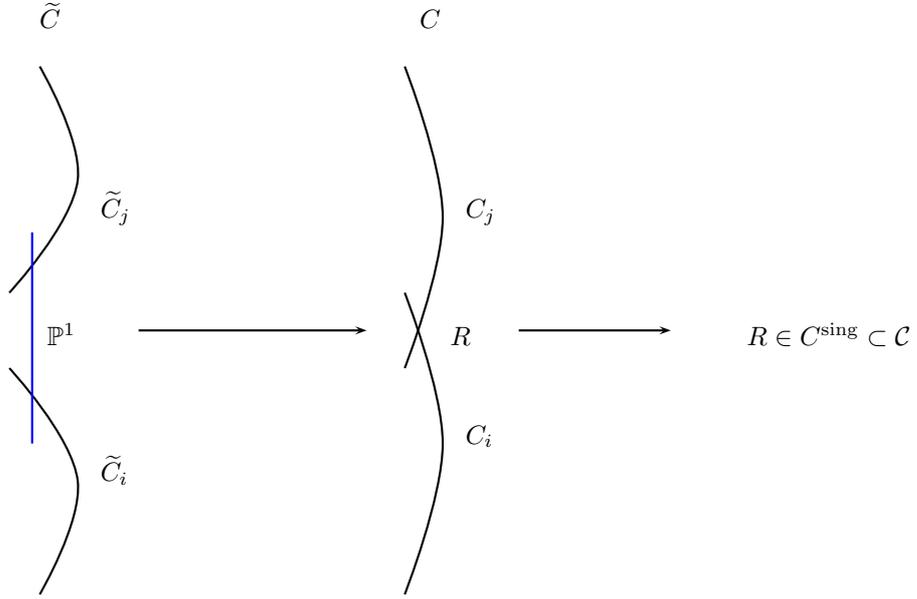

Let $C$ be a nodal curve. We define $C_R$ as the curve obtained from $C$ by replacing the node $R$ by a smooth rational curve, and $C(1)$ as the curve obtained from $C$ by replacing each reducible node of $C$ by a smooth rational curve. Let $\mathcal{I}_{\left. \Delta \right\vert \mathcal{C}^2}$ be the ideal sheaf of $\Delta \subset \mathcal{C}^2$ and let 
\[
\mathbb{P}_{\mathcal{C}^2}(\mathcal{I}_{\left. \Delta \right\vert \mathcal{C}^2}) := \text{Proj}_{\mathcal{C}^2}(\mathcal{S}(\mathcal{I}_{\left. \Delta \right\vert \mathcal{C}^2})),
\]
where $\mathcal{S}(\mathcal{I}_{\left. \Delta \right\vert \mathcal{C}^2})$ is the sheaf of symmetric algebras of $\mathcal{I}_{\left. \Delta \right\vert \mathcal{C}^2}$. 

The next three propositions summarize the properties of the good partial desingularizations we are looking for

\begin{proposition} \label{pro:diag}
Let $\phi:\widetilde{\mathcal{C}^{2}}\rightarrow\mathcal{C}^{2}$ be the blowup of $\mathcal{C}^2$ along $\Delta$. Let $\rho_i := p_i \phi$, where $p_i:\mathcal{C}^2 \rightarrow \mathcal{C}$ is the projection onto the $i$-th factor for $i=1,2$. Let $R \in C$. For $i=1,2$, let $X_i := \rho_i^{-1}(R)$ and denote by $\mu_i:X_i \rightarrow C$ the restriction of $\rho_{3-i}$ to $X_i$. Then the following statements hold:
\begin{enumerate}[1)]
\item $\widetilde{\mathcal{C}^2}$ is $\mathcal{C}^2$-isomorphic to $\mathbb{P}_{\mathcal{C}^2}(\mathcal{I}_{\left. \Delta \right\vert \mathcal{C}^2})$. 
\item $\rho_i$ is flat for $i=1,2$.
\item If $R$ is not a node of $C$, then $\mu_i$ is an isomorphism for $i=1,2$.
\item If $R$ is a node of $C$, then $X_i$ is $C$-isomorphic to $C_R$ and $\widetilde{\mathcal{C}^2}$ is regular along the rational component of $X_i$ contracted by $\mu_i$ for $i=1,2$.
\end{enumerate}
\end{proposition}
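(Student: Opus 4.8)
The plan is to reduce every assertion to a local computation at the finitely many points where $\phi$ fails to be an isomorphism, namely the points $(R,R)$ with $R\in C^{\text{sing}}$. Since $f:\mathcal{C}\to B$ is a regular smoothing of the nodal curve $C$, at a node $R$ the completed local ring is $\widehat{\mathcal{O}}_{\mathcal{C},R}\cong K[[x,y]]$ with the family given by $t=xy$, so that
\[
\widehat{\mathcal{O}}_{\mathcal{C}^{2},(R,R)}\cong K[[x_{1},y_{1},x_{2},y_{2}]]/(x_{1}y_{1}-x_{2}y_{2})
\]
is the three-dimensional ordinary double point, and $\mathcal{I}_{\Delta}=(u,v)$ with $u=x_{1}-x_{2}$, $v=y_{1}-y_{2}$. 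Away from these points $\mathcal{C}^{2}$ is regular, $\Delta$ is Cartier (a height-one subvariety of a regular, hence factorial, local ring is principal), and $\phi$ is an isomorphism; so all four statements are trivial there and the content sits at the conifold above. For 1), I would use the canonical closed immersion $\widetilde{\mathcal{C}^{2}}=\mathrm{Bl}_{\Delta}\mathcal{C}^{2}\hookrightarrow\mathbb{P}_{\mathcal{C}^{2}}(\mathcal{I}_{\Delta})$ coming from the surjection of the symmetric algebra onto the Rees algebra, and show it is an isomorphism by proving $\mathcal{I}_{\Delta}$ is of linear type. Rewriting the defining equation as $x_{1}y_{1}-x_{2}y_{2}=y_{2}u+x_{1}v$ produces the linear syzygy $y_{2}U+x_{1}V$; lifting syzygies to $K[[x_{1},y_{1},x_{2},y_{2}]]$, where $u,v$ form part of a regular system of parameters, shows this syzygy together with the Koszul syzygy generate all relations. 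Equivalently, and this is the computation I would record: in the chart $U\neq 0$ one sets $v=su$ and $y_{2}=-x_{1}s$, and checks the quadric becomes identically zero, so $\mathrm{Proj}(\mathcal{S}(\mathcal{I}_{\Delta}))$ is in this chart the smooth spectrum of $K[[u,x_{2}]][s]$, coinciding with the blowup chart.

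The same chart computation yields 3) and 4). If $R$ is not a node, then $(R,R)$ is a smooth point of $\mathcal{C}^{2}$, the fiber $p_{i}^{-1}(R)=\{R\}\times C$ meets $\Delta$ only at $(R,R)$, and $\phi$ is an isomorphism over a neighborhood of it, so $X_{i}\cong p_{i}^{-1}(R)\cong C$ with $\mu_{i}=\rho_{3-i}|_{X_{i}}$ the evident identification; this is 3). If $R$ is a node, the chart above shows the strict transform of the quadric is smooth and that $\phi^{-1}(R,R)$ is a single $\mathbb{P}^{1}$ (the blowup of the non-Cartier Weil divisor $\Delta$ is the small resolution inserting a ruling of the cone, not the big resolution by a quadric surface), giving regularity of $\widetilde{\mathcal{C}^{2}}$ along this rational curve. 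To identify $X_{i}$, note that $p_{1}^{-1}(R)=\{R\}\times C$ acquires a node at $(R,R)$; pulling it back, its two branches separate into the strict transforms of the two branches and are joined by the exceptional $\mathbb{P}^{1}$, which is precisely $C_{R}$, while $\mu_{i}=\rho_{3-i}|_{X_{i}}$ contracts this $\mathbb{P}^{1}$ to $R$ and restricts isomorphically to each branch, i.e. it is the modification map $\mu_{R}:C_{R}\to C$. This proves 4); the matching of the two branches with the two special points of the inserted $\mathbb{P}^{1}$ is where I would be most careful.

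For 2), I would invoke the local criterion of flatness (miracle flatness): the target $\mathcal{C}$ is regular; the source $\widetilde{\mathcal{C}^{2}}$ is Cohen--Macaulay (smooth along the exceptional loci over the nodes by the above, and elsewhere isomorphic to $\mathcal{C}^{2}$, which is locally the hypersurface $x_{1}y_{1}=x_{2}y_{2}$ in a regular fourfold, hence Cohen--Macaulay); and every fiber of $\rho_{i}$ is one-dimensional, since $p_{i}^{-1}(q)$ is a fiber of $f$ for each $q\in\mathcal{C}$ and $\phi$ alters it only by possibly inserting a $\mathbb{P}^{1}$, preserving the dimension. A morphism from a Cohen--Macaulay scheme to a regular scheme with equidimensional fibers is flat, giving 2).

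The main obstacle is the local analysis of the blowup of the conifold $x_{1}y_{1}=x_{2}y_{2}$ along $\Delta=(u,v)$: verifying that $\mathcal{I}_{\Delta}$ is of linear type, that the blowup is the small resolution inserting a single $\mathbb{P}^{1}$, and that the total transform of $\{R\}\times C$ is exactly $C_{R}$. Everything else follows formally once this local picture is pinned down.
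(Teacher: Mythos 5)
Your argument is sound, but it is worth noting that the paper gives no proof of this proposition at all: it simply cites \cite[Proposition 2.2]{CoEP}, where the statement is proved by essentially the local analysis you carry out. So you have not diverged from the paper so much as supplied the computation the paper outsources. Your local model is the right one: $\widehat{\Ocal}_{\mathcal{C}^2,(R,R)}\cong K[[x_1,y_1,x_2,y_2]]/(x_1y_1-x_2y_2)$ with $\mathcal{I}_\Delta=(u,v)$ and the rewriting $x_1y_1-x_2y_2=y_2u+x_1v$, and your chart computation correctly shows that $\mathrm{Proj}(\mathcal{S}(\mathcal{I}_\Delta))$ is regular and irreducible there, that the fiber over $(R,R)$ is a single $\mathbb{P}^1$ (a small resolution), and that $X_i$ becomes $C_R$, with miracle flatness giving statement 2. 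One imprecision to fix: your opening claim that ``away from the points $(R,R)$ with $R\in C^{\text{sing}}$ the scheme $\mathcal{C}^2$ is regular'' is false --- $\mathcal{C}^2$ is singular at \emph{every} pair $(R,S)$ of nodes, as the paper itself records. What is true, and what your argument actually needs, is that $\Delta$ meets $\mathrm{Sing}(\mathcal{C}^2)$ only at the points $(R,R)$ with $R$ a node, so that at all other points of $\Delta$ the ambient scheme is regular, $\Delta$ is Cartier, and $\phi$ is an isomorphism; away from $\Delta$ the blowup is trivially an isomorphism regardless of the singularities of $\mathcal{C}^2$. You implicitly correct this later when you justify Cohen--Macaulayness of $\widetilde{\mathcal{C}^2}$ by noting that $\mathcal{C}^2$ is locally a hypersurface, but the opening sentence should be restated. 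With that repair the proof is complete and matches the cited source in both method and conclusion.
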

\begin{proof}
\cite[Proposition 2.2, p.2925]{CoEP}
\end{proof} \\

We have the following diagram

\begin{displaymath}
\xymatrix{
\widetilde{\mathcal{C}^{2}} \ar[r]^{\phi} \ar@/^2pc/[rr]^{\rho_2} \ar@/_/[dr]_{\rho_1} & \mathcal{C}^{2} \ar[r]^{p_2} \ar[d]^{p_1} & \mathcal{C} \ar[d]^{f} \\
& \mathcal{C} \ar[r]_{f} & B  }
\end{displaymath}
where $p_{1}$ and $p_{2}$ are, respectively, the projections onto the first
and second factors.

\begin{proposition} \label{pro:strict}
Let $R$ and $S$ be reducible nodes of $C$. Assume $R \in C_i \cap C_j$ and $S \in C_k \cap C_l$, for integers $i,j,k,l$ with $i \neq j$ and $k \neq l$. If $R=S$, assume $i=k$ and $j=l$. Let $\phi: \widetilde{\mathcal{C}}^2 \rightarrow \mathcal{C}^2$ denote the blowup of $\mathcal{C}^2$ along $C_i \times C_l$, or along the diagonal if $R=S$. Put $E := \phi^{-1}(R,S)$. Then the following statements hold:
\begin{enumerate}[1)]
\item $E$ is a smooth rational curve and $\widetilde{\mathcal{C}}^2$ is regular in a neighborhood of $E$.
\item The strict transforms of $C_i \times C_l$ and $C_j \times C_k$ contain $E$, while those of $C_i \times C_k$ and $C_j \times C_l$ intersect $E$ transversally at a unique point, distinct for each transform.
\item If $R=S$, the strict transform of the diagonal contains $E$.
\item The composition $\widetilde{\mathcal{C}}^2 \rightarrow \mathcal{C}$ of $\phi$ with the projection of $\mathcal{C}^2$ onto any of its factors is flat.
\end{enumerate}
\end{proposition}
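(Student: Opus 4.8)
The plan is to reduce every assertion to an explicit local computation around the point $(R,S)$, since all four statements concern only a neighborhood of the fibre $E=\phi^{-1}(R,S)$. First I would record local coordinates coming from the hypothesis that $f:\mathcal{C}\to B$ is a regular smoothing: near the node $R\in C_i\cap C_j$ one may write $\mathcal{C}=\mathrm{Spec}\,K[[x_1,y_1]]$ with the family given by $t=x_1y_1$ and $C_i=\{y_1=0\}$, $C_j=\{x_1=0\}$, and similarly $\mathcal{C}=\mathrm{Spec}\,K[[x_2,y_2]]$ near $S\in C_k\cap C_l$ with $C_k=\{y_2=0\}$, $C_l=\{x_2=0\}$. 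Because $\mathcal{C}^2=\mathcal{C}\times_B\mathcal{C}$ imposes $x_1y_1=x_2y_2$, near $(R,S)$ the threefold $\mathcal{C}^2$ is the hypersurface
\[
\{x_1y_1-x_2y_2=0\}\subset\mathrm{Spec}\,K[[x_1,y_1,x_2,y_2]],
\]
an ordinary double point (the affine cone over a smooth quadric surface). The four product surfaces $C_a\times C_b$ through $(R,S)$ are exactly the four coordinate $2$-planes lying on this quadric, and they fall into the two rulings of the quadric: $\{C_i\times C_l,\ C_j\times C_k\}$ in one ruling and $\{C_i\times C_k,\ C_j\times C_l\}$ in the other.

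The heart of the argument is that blowing up one plane of a ruling (here $C_i\times C_l$, of ideal $(y_1,x_2)$) is the classical \emph{small resolution} of the conifold. I would verify this by hand in the two standard charts: setting $x_2=\lambda y_1$ on one chart and $y_1=\kappa x_2$ on the other, the hypersurface equation lets one eliminate a variable so that each chart is isomorphic to $\mathbb{A}^3$; this shows $\widetilde{\mathcal{C}}^2$ is regular near $E$ and that $E$ is obtained by gluing the two affine lines $\{\lambda\}$ and $\{\kappa\}$ along $\lambda=\kappa^{-1}$, hence $E\cong\mathbb{P}^1$. This gives (1). The case $R=S$ is handled identically after the coordinate change $u=x_1-x_2$, $v=y_1-y_2$ that puts the diagonal at $\{u=v=0\}$: the equation becomes $x_2v+y_2u+uv=0$ (still an ordinary double point), and blowing up $(u,v)$ is again a small resolution, so (1) holds uniformly.

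For (2) and (3) I would simply read off the strict transforms of the four planes in the same two charts. The two planes lying in the ruling of the blown-up center have preimage ideals that become principal and cut out surfaces through the whole of $E$, so their strict transforms contain $E$, giving the first half of (2); in the $R=S$ case the identical computation applied to the diagonal gives (3). The two planes in the opposite ruling instead have preimages meeting a chart only along $E$ except over a single point: one appears as a coordinate surface transverse to $E$ in the first chart (namely $C_j\times C_l$, at $\lambda=0$) and the other in the second chart (namely $C_i\times C_k$, at $\lambda=\infty$), so each meets $E$ transversally in one point, and the two points $\lambda=0$ and $\lambda=\infty$ of $E\cong\mathbb{P}^1$ are distinct. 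This bookkeeping is the main obstacle: one must keep straight which plane belongs to which ruling and, in the $R=S$ case, re-express the four planes in the shifted coordinates $(u,v,x_2,y_2)$ before tracking them, which is where indexing and sign errors are easiest to make.

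Finally, for (4) I would invoke miracle flatness. The source $\widetilde{\mathcal{C}}^2$ is Cohen–Macaulay — regular near $E$ by (1), and isomorphic to the hypersurface $\mathcal{C}^2$ (which is Cohen–Macaulay) away from $E$; the target $\mathcal{C}$ is a regular surface; and every fibre of $\rho_i=p_i\circ\phi$ is a curve, i.e.\ the fibres are equidimensional of dimension $1$, as one checks directly in each chart (for instance the fibre of $(y_1,y_2,\lambda)\mapsto(\lambda y_2,y_1)$ over a point is a conic or a pair of lines). The local criterion of flatness for a Cohen–Macaulay scheme over a regular base with equidimensional fibres then shows $\rho_i$ is flat for either projection, proving (4).
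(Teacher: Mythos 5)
Your proposal is correct, but it is worth noting that the paper does not actually prove this proposition: its ``proof'' is a citation to Coelho--Esteves--Pacini \cite[Proposition 3.1]{CoEP}, so you are supplying an argument where the authors defer to a reference. Your route is the standard one and the computations check out: near $(R,S)$ the threefold $\mathcal{C}^2$ is the hypersurface $x_1y_1=x_2y_2$ in $\mathrm{Spec}\,K[[x_1,y_1,x_2,y_2]]$, the four products $C_a\times C_b$ are the four planes on this quadric cone split into the two rulings exactly as you say ($C_i\times C_l$ and $C_j\times C_k$ meet only at the vertex, hence lie in one ruling), and blowing up the Weil divisor $(y_1,x_2)$ is the small resolution: in the chart $x_2=\lambda y_1$ the relations force $x_1=\lambda y_2$, giving a smooth chart with coordinates $(y_1,y_2,\lambda)$ in which $E=\{y_1=y_2=0\}$ and the strict transforms behave exactly as claimed ($C_j\times C_l$ at $\lambda=0$, $C_i\times C_k$ at $\lambda=\infty$). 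The coordinate change $u=x_1-x_2$, $v=y_1-y_2$ correctly reduces the diagonal case to the same picture, and your identification of which planes share a ruling with the diagonal is right. For part (4) your appeal to miracle flatness is sound, though you should state the fibre-dimension check slightly more carefully: the point is that $\phi$ is an isomorphism away from the finitely many points $(R,S)$ and each exceptional fibre is a single $\mathbb{P}^1$ lying over a point of $\mathcal{C}$, so no fibre of $\rho_i$ acquires a two-dimensional component; combined with $\widetilde{\mathcal{C}}^2$ Cohen--Macaulay and $\mathcal{C}$ regular of dimension $2$, flatness follows. What your self-contained argument buys is transparency (the reader sees the conifold and its small resolution explicitly, which also explains why $E$ is a single rational curve rather than a quadric surface); what the paper's citation buys is compatibility with the later, more delicate statements (Proposition \ref{pro:fiber} and the twister formula \eqref{twister}) that genuinely require the full machinery of \cite{CoEP} and are proved there in the same coordinates.
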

\begin{proof}
\cite[Proposition 3.1, p.2928]{CoEP}
\end{proof} \\

The pictures \ref{fig1} and \ref{fig2} below illustrates the possible blowups along a product of irreducible subcurves of $C$.

\begin{figure}[!h]
\centering

\begin{minipage}[b]{0.45\linewidth}
\centering

\begin{pspicture}(-1.8,-1.6)(2.5,3)
\psline(-1,0)(-1,3)
\psline(0,-1)(3,-1)
\psdots(2,-1)(-1,2)
\put(1,-1.6){$C_i$}
\put(1.9,-1.5){$R$}
\put(2.5,-1.6){$C_j$}
\put(-1.8,2.5){$C_l$}
\put(-1.5,1.9){$S$}
\put(-1.8,0.9){$C_k$}
\psline(0,2)(1,2)
\psline(1,2)(1,3)
\psline[linecolor=blue](1,2)(2,1)
\psline(2,0)(2,1)
\psline(2,1)(3,1)
\psdots(1,2)(2,1)
\put(1.6,1.6){{\scriptsize $E$}}
\put(2,2.1){$\widetilde{C_j \times C_l}$}
\put(-0.2,0.8){$\widetilde{C_i \times C_k}$}
\end{pspicture}
\caption{Blow up along $C_i\times C_k$. The strict transforms of $C_i\times C_k$ and $C_j\times C_l$ contains $E$.}\label{fig1}

\end{minipage}
\hfill
\begin{minipage}[b]{0.45\linewidth}
\centering

\begin{pspicture}(0,-1.6)(4.4,3)
\psline(4,0)(4,3)
\psline(0,-1)(3,-1)
\psdots(1,-1)(4,2)
\put(0.2,-1.6){$C_i$}
\put(0.9,-1.5){$R$}
\put(1.8,-1.6){$C_j$}
\put(4.4,2.5){$C_l$}
\put(4.2,1.8){$S$}
\put(4.4,0.9){$C_k$}
\psline(0,1)(1,1)
\psline(1,0)(1,1)
\psline[linecolor=blue](1,1)(2,2)
\psline(2,2)(3,2)
\psline(2,2)(2,3)
\psdots(2,2)(1,1)
\put(1.5,1.2){{\scriptsize $E$}}
\put(1.8,0.7){$\widetilde{C_j \times C_k}$}
\put(0,1.9){$\widetilde{C_i \times C_l}$}
\end{pspicture}
\caption{Blow up along $C_i\times C_l$. The strict transforms of $C_i\times C_l$ and $C_j\times C_k$ contains $E$.}\label{fig2}

\end{minipage}

\end{figure}

We denote by $\mathcal{N}(C)$ the collection of reducible nodes of $C$ and for each $i,k \in \{ 1,...,p\}$ let $\mathcal{N}_{i,k}(C)$ denote the subset of $\mathcal{N}(C)^2$ containing every pair of nodes $(R,S)$ such that $R \in C_i$ and $S \in C_k$. Let $\phi:\widetilde{\mathcal{C}^2} \rightarrow \mathcal{C}^2$ be a good partial desingularization of $\mathcal{C}^2$. Denote by $\mathcal{N}_{i,k}(\phi)$ the subset of $\mathcal{N}_{i,k}(C)$ formed by pairs $(R,S)$ such that $\phi^{-1}(R,S) \subset \widetilde{C_i \times C_k}$. 

\begin{proposition} \label{pro:fiber}
Let $\phi:\widetilde{\mathcal{C}^2} \rightarrow \mathcal{C}^2$ be a good partial desingularization. Let $\rho:\widetilde{\mathcal{C}^2} \rightarrow \mathcal{C}$ denote its composition with the first projection $p_1:\mathcal{C}^2 \rightarrow \mathcal{C}$.  Let $R \in C$ and $\widehat{C} := \rho^{-1}(R)$. Let $\mu:\widehat{C} \rightarrow C$ be the restriction to $\widehat{C}$ of $\phi$ composed with the second projection $p_2:\mathcal{C}^2 \rightarrow \mathcal{C}$. Then, the following statements hold:
\begin{enumerate}[1)]
\item $\rho$ is flat.
\item $\widetilde{\mathcal{C}}^2$ is regular along each smooth rational curve of $\widehat{C}$ contracted by $\mu$.
\item If $R$ is not a node of $C$, then $\mu$ is an isomorphism.
\item If $R$ is an irreducible node of $C$, then $\widehat{C}$ is $C$-isomorphic to $C_R$.
\item If $R$ is a reducible node of $C$, then $\widehat{C}$ is $C$-isomorphic to $C(1)$.
\end{enumerate}
Furthermore, for each $i,k \in \{1,...,p\}$, let $D_{i,k}$ denote the strict transform to $\widetilde{\mathcal{C}}^2$ of $C_i \times C_k$. Consider the Cartier divisor
\[D=\sum_{i,k} w_{i,k}D_{i,k}\]
for given integers $w_{i,k}$. Then, if $R$ is a reducible node of $\widehat{C}$, the resctriction $\left. \Ocal_{\widetilde{\mathcal{C}}^2}(D) \right\vert_{\widehat{C}}$ is a twister of $\widehat{C}$. More specifically, for each $i=1,...,p$, let $\widehat{C}_i$ be the strict transform to $\widehat{C}$ of $C_i$ via $\mu$, and for each reducible node $S$ of $C$, let $E_S := \mu^{-1}(S)$. Then
\begin{equation}
\left.  \mathcal{O}_{\widetilde{\mathcal{C}^{2}}}\left(  D\right)  \right\vert
_{\widehat{C}}=\mathcal{O}_{\widehat{C}}\left(  \sum_{k=1}^{p}a_{k}%
\widehat{C_{k}}+\sum_{S \in \mathcal{N}(C)}b_{S}E_{S}\right), \label{twister}%
\end{equation}
where $a_k := \sum_{i,k} w_{i,k}$, the sum over the two $i$ such that $R \in C_i$, and $b_S := \sum_{i,k} w_{i,k}$, the sum over two pairs $(i,k)$ such that $(R,S) \in \mathcal{N}_{i,k}(\phi)$.
\end{proposition}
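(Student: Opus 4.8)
The plan is to treat the first five assertions as local statements over the point $R\in C$, reducing the whole proposition to the two elementary blowups already analyzed in Propositions \ref{pro:diag} and \ref{pro:strict}. Writing $\phi$ as the composition of the diagonal blowup followed by the product blowups, the fiber $\widehat{C}=\rho^{-1}(R)$ is altered only by those blowups whose centers pass through a singular point of $\mathcal{C}^2$ lying over $R$, i.e. through a point $(R,S)$ with $S\in C^{\text{sing}}$. The key local fact I would isolate first is that, in the local model of $\mathcal{C}^2$ at $(R,S)$ (the $3$-fold ordinary double point $x_1y_1=x_2y_2$ coming from the two nodal equations $x_1y_1=t=x_2y_2$), the product $C_i\times C_k$ is a \emph{Cartier} divisor --- so its blowup is an isomorphism near $(R,S)$ --- unless both $R$ and $S$ are reducible nodes. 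Indeed, if either node is irreducible both of its branches lie on a single component and $C_i\times C_k$ becomes the principal divisor $\{x_2=0\}$ (resp. $\{x_1=0\}$). Thus the only centers that modify $\widehat{C}$ are the diagonal at $(R,R)$ and the product blowups at pairs of reducible nodes.

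From this, (1)--(5) follow quickly. For (1), $p_1$ is the base change of the flat morphism $f$, hence flat, and flatness of $\rho$ is preserved through the blowups by Propositions \ref{pro:diag}(2) and \ref{pro:strict}(4); alternatively one invokes miracle flatness, since $\widetilde{\mathcal{C}^2}$ is Cohen--Macaulay (its only singularities are the remaining hypersurface double points), $\mathcal{C}$ is regular, and every fiber of $\rho$ is a curve. Assertion (2) is exactly the regularity statements of Propositions \ref{pro:diag}(4) and \ref{pro:strict}(1). For (3), if $R$ is not a node there is no singular point of $\mathcal{C}^2$ over $R$, so $\phi$ is an isomorphism there and $\mu$ is an isomorphism. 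For (4), if $R$ is an irreducible node the only center meeting the fiber is the diagonal at $(R,R)$, since all products through the points $(R,S)$ are Cartier by the observation above; hence $\widehat{C}$ is $C$ with the single node $R$ replaced by a smooth rational curve, i.e. $C_R$. For (5), if $R$ is reducible then $(R,R)$ and every $(R,S)$ with $S$ reducible are resolved, inserting one rational curve over each reducible node, while the $(R,S)$ with $S$ irreducible stay untouched and remain nodes of $\widehat{C}$; this is precisely $C(1)$.

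For the twister formula I would pass to a surface. Choose a general smooth curve $T\subset\mathcal{C}$ through $R$, transverse to every branch of $C$ at $R$ and avoiding the other nodes, and set $\mathcal{S}:=\widetilde{\mathcal{C}^2}\times_{\mathcal{C}}T$ via $\rho$; then $\mathcal{S}\to T$ is a one-parameter smoothing of $\widehat{C}$, the components $\widehat{C_k}$ and the exceptional curves $E_S$ are divisors on $\mathcal{S}$, and by definition the right-hand side of \eqref{twister} is $\mathcal{O}_{\mathcal{S}}(\widehat{D})|_{\widehat{C}}$ with $\widehat{D}=\sum_k a_k\widehat{C_k}+\sum_S b_SE_S$. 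Since each $D_{i,k}$ lies over the special fiber $C\times C$ of $\mathcal{C}^2/B$, while $T$ meets $C$ only at $R$, the restriction $D_{i,k}|_{\mathcal{S}}$ is supported entirely on $\widehat{C}$; in particular $\mathcal{O}(D)|_{\widehat{C}}$ is visibly a twister of $\widehat{C}$. It then remains to read off the multiplicities. A local coordinate computation shows that $D_{i,k}|_{\mathcal{S}}$ contains $\widehat{C_k}$ with multiplicity $1$ exactly when $R\in C_i$ and contains no other $\widehat{C_m}$; summing over the two components through $R$ gives the coefficient $a_k=\sum_{R\in C_i}w_{i,k}$. Likewise, $D_{i,k}|_{\mathcal{S}}$ contains $E_S$ with multiplicity $1$ precisely when $\phi^{-1}(R,S)\subset\widetilde{C_i\times C_k}$, i.e. when $(R,S)\in\mathcal{N}_{i,k}(\phi)$, which by Proposition \ref{pro:strict}(2)--(3) happens for exactly two pairs $(i,k)$ per reducible node $S$; summing gives $b_S=\sum_{(R,S)\in\mathcal{N}_{i,k}(\phi)}w_{i,k}$. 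Adding the weighted contributions yields $D|_{\mathcal{S}}=\widehat{D}$ near $\widehat{C}$, and restricting to $\widehat{C}$ gives \eqref{twister}.

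The main obstacle I expect lies in the exact bookkeeping of these multiplicities at the nodes of $\widehat{C}$ and along the curves $E_S$, rather than at the generic points of the components. At a point $(R,S)$ with $S$ irreducible the surface $\mathcal{S}$ itself acquires an $A_1$-singularity, its local equation becoming $x_2y_2=s^2$, and along the $E_S$ several strict transforms meet, some containing $E_S$ and some crossing it transversally by Proposition \ref{pro:strict}(2). Confirming that each relevant component occurs with multiplicity exactly $1$, that the transversal strict transforms contribute no spurious point-divisors on $\widehat{C}$, and that the identification $D|_{\mathcal{S}}=\widehat{D}$ is independent of the choice of $T$, all require a careful analysis in the conifold model and its partial resolutions; this is where the regularity furnished by assertion (2) and Propositions \ref{pro:diag}(4) and \ref{pro:strict}(1) does the essential work.
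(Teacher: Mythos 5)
The paper offers no argument of its own for this proposition: its ``proof'' is the single citation \cite[Proposition~4.4, p.~27]{CoEP}. Your proposal is therefore necessarily a different route, namely a self-contained reconstruction of the cited result from the two elementary blowup computations the paper does import (Propositions~\ref{pro:diag} and~\ref{pro:strict}), and its skeleton is sound. The observation you put at the center --- that in the conifold model $x_1y_1=x_2y_2$ at $(R,S)$ the center $C_i\times C_k$ is already Cartier unless both $R$ and $S$ are reducible nodes, so the only blowups that change the fiber over $R$ are the diagonal at $(R,R)$ and the product blowups at pairs of reducible nodes --- is exactly what makes items (3)--(5) fall out, and miracle flatness is a clean alternative for (1). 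Two caveats are worth recording. First, Proposition~\ref{pro:strict} is stated for a single blowup of $\mathcal{C}^2$ along one product (or the diagonal), whereas the good partial desingularization blows up \emph{strict transforms} in a fixed order; your Cartier-divisor remark is precisely what licenses applying it link by link (at each conifold point only the first center through it acts, the later ones having become Cartier), but this should be said explicitly, as should the fact that a non-Cartier center with $Y\neq Z$ always exists through each pair of distinct reducible nodes. Second, for the formula (\ref{twister}) your slice $\mathcal{S}=\rho^{-1}(T)$ is finite of degree $2$ over $B$, and the multiplicity-one claims --- along each $\widehat{C}_k$ (computed branch by branch, where transversality of $T$ to the branch of $C_i$ gives order $1$) and especially along each $E_S$, where several strict transforms meet and the transversal ones must be checked to contribute only $\widehat{C}_k$ and no embedded or point components --- are asserted rather than carried out. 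You flag this honestly; it is where the remaining work lies, so as written the proposal is a faithful outline of the proof in \cite{CoEP} rather than a complete replacement for it, but there is no gap in the strategy.
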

\begin{proof}
\cite[Proposition 4.4, p.2939]{CoEP}
\end{proof} \\

For instance, let $C = C_1 \cup C_2$ with $C_1 \cap C_2=\{S\}$ and let $f:\mathcal{C} \rightarrow B$ be a regular smoothing of $C$. Suppose that $\phi: \widetilde{\mathcal{C}}^2 \rightarrow \mathcal{C}^2$ is a blowup of $\mathcal{C}^2$ and consider a Cartier divisor
\[
D=w_{1,1}D_{1,1}+w_{1,2}D_{1,2}+w_{2,1}D_{2,1}+w_{2,2}D_{2,2}.
\]
If $\phi$ is the blowup along $C_1 \times C_1$, then
\[
\left. \Ocal_{\widetilde{\mathcal{C}}^2}(D) \right\vert_{\widehat{C}} \simeq \Ocal_{\widehat{C}} \left( (w_{1,1}+w_{2,1})\widehat{C}_1+(w_{1,2}+w_{2,2})\widehat{C}_2+(w_{1,1}+w_{2,2})E_S \right).
\]
If $\phi$ is the blowup along $C_1 \times C_2$, then
\[
\left. \Ocal_{\widetilde{\mathcal{C}}^2}(D) \right\vert_{\widehat{C}} \simeq \Ocal_{\widehat{C}} \left( (w_{1,1}+w_{2,1})\widehat{C}_1+(w_{1,2}+w_{2,2})\widehat{C}_2+(w_{1,2}+w_{2,1})E_S \right).
\]

By definition, $\widehat{C}$ contains a copy
of each component $C_{i}$ of $C$ and contains an exceptional component glued
at each node of $C$ (see figure \ref{fig3})%

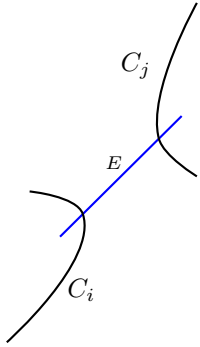
\begin{figure}[!h]

\centering
\begin{pspicture}(1,0)(3.5,4.8)
\psline[linecolor=blue](1.7,1.7)(3.3,3.3)
\pscurve(1.3,2.3)(2,2)(1,0.3)
\put(1.8,0.9){$C_i$}
\pscurve(3.5,2.5)(3,3)(3.5,4.8)
\put(2.5,3.9){$C_j$}
\put(2.3,2.6){{\scriptsize $E$}}
\end{pspicture}
\caption{$\widehat{C}$ locally around an $\phi$-exceptional component $E$.}\label{fig3}
\end{figure}


\subsection{Admissible sheaves}


Recall the notation of preceding Sections. Let
$P\in C$ be a fixed smooth point. For each $i\in\left\{  1,...,p\right\}  ,$
let $Q_{i}\in C_{i}$ be a smooth point. For a line bundle $L$ on $C$ consider
the sheaf
\[
L_{i}:=m_{Q_{i}}\otimes L\text{.}%
\]
According to \cite[p. 12-13]{CoEP} and \cite[Lemmas 30 and 31, p. 3068]{E01}, there exists a unique twister
$T_{i}:=\mathcal{O}_{C}(Z_{i})$ where $Z_{i}$ is a formal sum of
components of $C$ such that $L_{i}\otimes T_{i}$ is $P$-quasistable.

For $i\in\left\{  1,...,p\right\}  $, set
$\widetilde{T}_{i}:=\mathcal{O}_{\widetilde{\mathcal{C}^{2}}}\left(
\widetilde{C_{i}\times Z_{i}}\right)  $ and $\widetilde{\mathcal{T}}=\bigotimes
\limits_{i=1}^{n}\widetilde{T_{i}}.$ If $\phi$ is a good partial desingularization, then $\widetilde{C_{i}\times Z_{i}}$ is a Cartier divisor of $\widetilde
{\mathcal{C}^{2}}$ for all $i\in\left\{  1,...,p\right\}  $. So, $\widetilde{\mathcal{T}}$ is an invertible sheaf of $\widetilde{\mathcal{C}^{2}}$.

Recall the definition of the sheaf on $\widetilde{\mathcal{C}^{2}}/\mathcal{C}$
\begin{equation}
\widetilde{\mathcal{M}}:=\mathcal{I}_{\widetilde{\Delta}/\widetilde{\mathcal{C}^{2}}%
}\otimes\left(  p_{2}\phi\right)  ^{\ast}\mathcal{L}\otimes\widetilde
{\mathcal{T}}, \nonumber%
\end{equation}
where $\mathcal{I}_{\widetilde{\Delta}/\widetilde{\mathcal{C}^{2}}}$ and $\widetilde
{\mathcal{T}}$ are invertible. We will show that $\phi_*\widetilde{\mathcal{M}}$ is a relatively torsion-free, rank-$1$ sheaf. For this we need the notion of admissibility introduced by \cite{EP} and \cite{CoEP}.

Let $\gamma:\mathcal{X} \rightarrow \mathcal{S}$ be a family of connected curves and $\psi: \mathcal{Y} \rightarrow \mathcal{X}$ be a proper morphism such that the composition $\theta := \gamma \circ \psi$ is another family of curves. We say that $\psi$ is a \textit{semistable modification} of $\gamma$ if for each geometric point $s \in \mathcal{S}$, there are a collection of nodes $\mathcal{N}_s$ of the fiber $\mathcal{X}_s$ 
 and a map $\eta_s :\mathcal{N}_s \rightarrow \mathbb{N}$ such that the induced map $\psi_s:\mathcal{Y}_s \rightarrow \mathcal{X}_s$, where $\mathcal{Y}_s$ is the fiber of $\theta$ over $s$, is $\mathcal{X}_s$-isomoprhic to $\mu_{\eta _s}:(\mathcal{X}_s)_{\eta _s} \rightarrow \mathcal{X}_s$. If $\eta_s$ is constant and equal to $1$ for every $s$, we say that $\psi$ is a \textit{small semistable modification} of $\gamma$.

Assume $\psi$ is a semistable modification of $\gamma$. Let $\mathcal{L}$ be an invertible sheaf on $\mathcal{Y}$. We say that $\mathcal{L}$ is $\psi$-\textit{admissible} (resp. \textit{negatively} $\psi$-\textit{admissible}, resp. \textit{positively} $\psi$-\textit{admissible}, resp. $\psi$-\textit{invertible}) at a given geometric point $s \in \mathcal{S}$ if the restriction of $\mathcal{L}$ to every chain of rational curves of $\mathcal{Y}_s$ over a node of $\mathcal{X}_s$ has degree $-1$, $0$ or $1$. (resp. $-1$ or $0$, resp. $0$ or $1$, resp. $0$). We say that $\mathcal{L}$ is $\psi$-\textit{admissible} (resp. \textit{negatively} $\psi$-\textit{admissible}, resp. \textit{positively} $\psi$-\textit{admissible}, resp. $\psi$-\textit{invertible}) if $\mathcal{L}$ is so at every $s \in \mathcal{S}$. Notice that, if $\mathcal{L}$ is negatively (resp. positively) $\psi$-admissible, for every chain of rational curves of $\mathcal{Y}_s$ over a node of $\mathcal{X}_s$, the degree of $\mathcal{L}$ on each component of the chain is $0$ but for at most one component where the degree is $-1$ (resp. $1$).

\begin{proposition} \label{pro:admissible} 
Let $\gamma:\mathcal{X} \rightarrow \mathcal{S}$ be a family of connected curves, $\psi: \mathcal{Y} \rightarrow \mathcal{X}$ a semistable modification of $\gamma$ and $\theta := \gamma \circ \psi$. Let $\mathcal{L}$ be an invertible sheaf on $\mathcal{Y}$ of relative degree $d$ over $\mathcal{S}$. Then the following statements hold:
\begin{enumerate}[1)]
\item The points $s \in \mathcal{S}$ at which $\mathcal{L}$ is $\psi$-\textit{admissible} (resp. negatively $\psi$-admissible, resp. positively $\psi$-admissible, resp. $\psi$-invertible) form an open set of $\mathcal{S}$.
\item $\mathcal{L}$ is $\psi$-\textit{admissible} if and only if $\psi_*\mathcal{L}$ is a relatively torsion-free, rank-$1$ sheaf on $\mathcal{X}/\mathcal{S}$ of relative degree $d$, whose formation commutes with base change. In this case, $\text{R}^1\psi_*\mathcal{L}=0$.
\item If $\mathcal{L}$ is $\psi$-admissible then the evaluation map $v:\psi^*\psi_*\mathcal{L} \rightarrow \mathcal{L}$ is surjective if and only if $\mathcal{L}$ is positively $\psi$-admissible. Furthermore, $v$ is bijective if and only if $\mathcal{L}$ is $\psi$-invertible, if and only if $\psi_*\mathcal{L}$ is invertible.
\end{enumerate}
\end{proposition}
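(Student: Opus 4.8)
The plan is to reduce every assertion to a local computation on a single chain of rational curves and then to spread the result over $\mathcal{S}$ by cohomology and base change. Since $\psi$ is an isomorphism away from the relative exceptional locus, each statement may be tested in an \'etale neighborhood of the image of one chain; there $\gamma:\mathcal{X}\to\mathcal{S}$ acquires a single node whose fiber over a geometric point $s$ carries the chain $E=E_s\subset\mathcal{Y}_s$ contracted by $\psi$ to the node $P=P_s$, and $E$ meets the strict transform of $\mathcal{X}_s$ at exactly the two branch points $y_-,y_+$ lying over $P$. All four notions (admissible, negatively/positively admissible, $\psi$-invertible) are conditions on the multidegree of $\mathcal{L}|_E$, so it suffices to translate them into properties of $\psi_*\mathcal{L}$ and $R^1\psi_*\mathcal{L}$ near $P$.

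First I would run the fiberwise analysis. Because the fibers of $\psi$ are one-dimensional, $R^1\psi_*\mathcal{L}$ is the top direct image and therefore commutes with base change; by Nakayama it vanishes near $P$ exactly when $H^1(E,\mathcal{L}|_E)=0$. When this holds, cohomology and base change make $\psi_*\mathcal{L}$ flat over $\mathcal{S}$ with formation commuting with base change, so its restriction to $\mathcal{X}_s$ is $\psi_{s*}(\mathcal{L}|_{\mathcal{Y}_s})$. A section in the fiber at $P$ is killed by the maximal ideal of $\mathcal{O}_{\mathcal{X}_s,P}$ precisely when it is supported on the contracted chain, i.e. when it comes from a global section of $\mathcal{L}|_E$ vanishing at both $y_-$ and $y_+$; hence $\psi_*\mathcal{L}$ is torsion-free at $P$ iff $H^0(E,\mathcal{L}|_E(-y_--y_+))=0$, and a Riemann--Roch count on the tree $E$ shows that when both vanishings hold the fiber is rank-$1$ of the predicted degree $d$. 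The heart of the matter is to check that these two vanishings hold simultaneously exactly on the admissible multidegrees: for a length-one chain $E\cong\mathbb{P}^1$ one has $H^1(\mathcal{O}(d))=0\iff d\ge-1$ and $H^0(\mathcal{O}(d-2))=0\iff d\le 1$, forcing $d\in\{-1,0,1\}$, and for a longer chain I would propagate the bound by induction, peeling off one terminal component at a time. This gives statement $2)$, and statement $1)$ follows at once from upper semicontinuity: the four loci are the complements of the supports of the coherent sheaves $R^1\psi_*\mathcal{L}$, of the torsion of $\psi_*\mathcal{L}$, and of the cokernel and kernel of the evaluation map $v$ of $3)$, each of which is closed.

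For $3)$ I would study the evaluation $v:\psi^*\psi_*\mathcal{L}\to\mathcal{L}$ along a chain. Away from the chains $v$ is an isomorphism, so its behavior is governed by each $E$, where its restriction is the evaluation map of the global sections of $\mathcal{L}|_E$. Hence $v$ is surjective over $E$ iff $\mathcal{L}|_E$ is globally generated, which for an admissible multidegree happens iff no component of the chain carries negative degree, that is iff $\mathcal{L}$ is positively $\psi$-admissible. Likewise $v$ is an isomorphism over $E$ iff $\mathcal{L}|_E$ is trivial of degree $0$ on every component, i.e. iff $\mathcal{L}$ is $\psi$-invertible; in that case $\mathcal{L}$ is pulled back near $E$ and $\psi_*\mathcal{O}_{\mathcal{Y}}=\mathcal{O}_{\mathcal{X}}$ (the fibers of $\psi$ being connected trees) shows $\psi_*\mathcal{L}$ invertible, while conversely invertibility of $\psi_*\mathcal{L}$ forces the fiber at $P$ to be one-dimensional and hence every component degree to vanish. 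Thus the three conditions in the last sentence of $3)$ coincide.

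The main obstacle is the chain computation underlying $2)$: one must control the torsion-free condition and the vanishing of $R^1$ simultaneously along a chain of arbitrary length and verify that the resulting constraints are exactly admissibility, rather than a weaker condition on partial degree sums. A secondary difficulty is uniformity in the family, since the number and lengths of the chains in the fibers of $\theta=\gamma\psi$ can jump over special points of $\mathcal{S}$; for this reason the base-change and semicontinuity arguments should be phrased intrinsically, through the coherent sheaves $\psi_*\mathcal{L}$ and $R^1\psi_*\mathcal{L}$ on $\mathcal{X}$, rather than fiber by fiber.
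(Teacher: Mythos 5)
The paper offers no internal proof of this proposition: its ``proof'' is the single citation \cite[Theorem 3.1]{EP}, so there is no argument of the authors' own to measure yours against. Your sketch is, in outline, the argument that the cited reference actually gives: localize at one contracted chain $E$ over a node $P$, characterize torsion-freeness, rank $1$ and degree preservation of $\psi_*\mathcal{L}$ by the two vanishings $H^1(E,\mathcal{L}|_E)=0$ and $H^0(E,\mathcal{L}|_E(-y_- -y_+))=0$, match these against the admissible multidegrees by induction on the length of the chain, and deduce openness and the description of the evaluation map from semicontinuity and global generation on $E$. Two places where you are glib and the reference must work harder. First, cohomology and base change for $\psi$ does not apply verbatim: $\mathcal{L}$ is flat over $\mathcal{S}$ but not over $\mathcal{X}$ (since $\psi$ contracts curves), so the identification $(\psi_*\mathcal{L})|_{\mathcal{X}_s}\cong \psi_{s*}(\mathcal{L}|_{\mathcal{Y}_s})$ has to be extracted from the fiberwise vanishing of $R^1\psi_{s*}$ together with the theorem on formal functions or an explicit local computation, not simply quoted. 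Second, your openness argument in part 1) expresses the four loci as complements of supports of coherent sheaves whose relevant properties are only established in parts 2) and 3) under an admissibility hypothesis, so a bootstrapping step is needed. Finally, be aware that admissibility is a condition on every connected subchain, not merely on the total degree of each chain (the paper's stated definition is loose here, though its subsequent remark reveals the intent); your $\mathbb{P}^1$ computation and the induction that peels off terminal components do yield exactly that condition, so the matching you rightly single out as the main obstacle does close.
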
 
\begin{proof}
\cite[Theorem 3.1, p.63]{EP}
\end{proof} \\

\begin{proposition} \label{pro:stability}
Let $X$ be a curve and $\psi:Y \rightarrow X$ a semistable modification of $X$. Let $P$ be a single point of $Y$ not lying on any component contracted by $\psi$. Let $\mathcal{E}$ be a locally free sheaf on $X$ and $\mathcal{L}$ an invertible sheaf on $Y$. Then $\mathcal{L}$ is semistable (resp. $P$-quasistable, resp. stable) with respect to $\psi ^*\mathcal{E}$ if and only if $\mathcal{L}$ is $\psi$-admissible (resp. negatively $\psi$-admissible, resp. $\psi$-invertible) and $\psi _* \mathcal{L}$ is semistable (resp. $\psi(P)$-quasistable, resp. stable) with respect to $\mathcal{E}$.
\end{proposition}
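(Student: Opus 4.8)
The plan is to compare the semistability inequalities on $Y$ with those on $X$ by exploiting two features of $\psi$. First, $\psi^{*}\mathcal{E}$ is trivial of rank $r:=\text{rk}(\mathcal{E})$ on every $\psi$-exceptional component, since such components are contracted; hence the exceptional locus is invisible to the degree term of the polarization. Second, Proposition \ref{pro:admissible}(2) identifies $\psi$-admissibility of $\mathcal{L}$ with $I:=\psi_{*}\mathcal{L}$ being relatively torsion-free of rank $1$, together with $R^{1}\psi_{*}\mathcal{L}=0$. Throughout I will use that for an invertible $\mathcal{L}$ one has $\mathcal{L}_{W}=\mathcal{L}|_{W}$, that $\chi(\psi^{*}\mathcal{E}\otimes\mathcal{L}|_{W})=r\,\chi(\mathcal{L}|_{W})+\deg_{W}(\psi^{*}\mathcal{E})$, and the additivity $\chi(\psi^{*}\mathcal{E}\otimes\mathcal{L})=\chi(\psi^{*}\mathcal{E}\otimes\mathcal{L}|_{W})+\chi(\psi^{*}\mathcal{E}\otimes\mathcal{L}|_{W'})-r\,\delta_{WW'}$ whenever $Y=W\cup W'$ with $\dim(W\cap W')=0$.

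First I would show that the chain part of the statement, namely admissibility, is exactly the part of semistability detected by the exceptional locus. Fix a chain $\Gamma$ of rational curves of $Y$ over a node of $X$, with components $E_{1},\dots,E_{n}$ and total $\mathcal{L}$-degree $d$. Since $\psi^{*}\mathcal{E}|_{\Gamma}$ is trivial and every connected sub-chain has arithmetic genus $0$, testing the subcurve $\Gamma$ gives $\chi(\psi^{*}\mathcal{E}\otimes\mathcal{L}|_{\Gamma})=r(d+1)$, while testing $\overline{Y-\Gamma}$ gives $r(1-d)$ (using $\chi(\psi^{*}\mathcal{E}\otimes\mathcal{L})=0$ and that $\Gamma$ meets $\overline{Y-\Gamma}$ in two points). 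Semistability then forces $-1\le d\le 1$ and, via connected sub-chains, that every sub-chain has degree $\ge -1$, equivalently $R^{1}\psi_{*}\mathcal{L}=0$; this is $\psi$-admissibility. In the $P$-quasistable case $P$ lies off the exceptional locus, so $P\in\overline{Y-\Gamma}$, and, together with the removal of single interior components, the strict inequalities upgrade $d$ to $\{-1,0\}$ with all but at most one component of degree $0$, i.e.\ negative $\psi$-admissibility; in the stable case strictness on both $\Gamma$ and $\overline{Y-\Gamma}$ forces $d=0$ on every component, i.e.\ $\psi$-invertibility, which by Proposition \ref{pro:admissible}(3) is the invertibility of $\psi_{*}\mathcal{L}$.

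With admissibility in hand, the heart of the argument is a minimization identity. For a subcurve $W\subseteq Y$ let $X_{W}\subseteq X$ denote the union of the components of $X$ whose strict transform lies in $W$. I would prove that, for each subcurve $X'\subseteq X$,
\[
\min_{X_{W}=X'}\ \chi(\psi^{*}\mathcal{E}\otimes\mathcal{L}|_{W})=\chi(\mathcal{E}\otimes I_{X'}),
\]
the minimum being realized by the subcurve $W^{*}$ containing the strict transform of $X'$, all chains over nodes interior to $X'$, and exactly those boundary chains whose $\mathcal{L}$-degree is negative. Indeed $W^{*}\rightarrow X'$ is again a semistable modification to which $\mathcal{L}$ restricts admissibly, so the projection formula and $R^{1}=0$ give $\chi(\psi^{*}\mathcal{E}\otimes\mathcal{L}|_{W^{*}})=\chi(\mathcal{E}\otimes\psi_{*}(\mathcal{L}|_{W^{*}}))=\chi(\mathcal{E}\otimes I_{X'})$, the last step because the chosen boundary chains reproduce node by node the local torsion-free quotient defining $I_{X'}$. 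That $W^{*}$ is a minimizer follows from admissibility: adjoining to any competitor a connected sub-chain attached at one end alters $\chi$ by $r$ times its degree, hence by at least $-r$, a floating sub-chain alters it by $r(\deg+1)\ge 0$, and interior chains of degree $\le 1$ are cheapest to include; these elementary moves show no $W$ with $X_{W}=X'$ drops below $\chi(\mathcal{E}\otimes I_{X'})$.

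Granting the identity, both implications are formal. If $\mathcal{L}$ is semistable then each $\chi(\mathcal{E}\otimes I_{X'})=\chi(\psi^{*}\mathcal{E}\otimes\mathcal{L}|_{W^{*}})\ge 0$, so $I$ is semistable; conversely, if $\mathcal{L}$ is admissible and $I$ semistable, every subcurve satisfies $\chi(\psi^{*}\mathcal{E}\otimes\mathcal{L}|_{W})\ge\chi(\mathcal{E}\otimes I_{X_{W}})\ge 0$, so $\mathcal{L}$ is semistable. For the quasistable refinement one uses that $P\in W^{*}$ precisely when $\psi(P)\in X'$, so subcurves of $Y$ through $P$ correspond to subcurves of $X$ through $\psi(P)$ and the strict inequalities match; the stable case is identical with strictness imposed on all proper subcurves. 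The main obstacle is exactly the minimization identity of the third paragraph: the combinatorial bookkeeping showing that, under admissibility, no subcurve meeting the chains partially or in disconnected pieces can beat the canonical minimizer $W^{*}$, and that the optimal inclusion of boundary chains reconstructs the torsion-free quotient $I_{X'}$ at each boundary node. Once this local analysis is carried out, the equivalence of the three pairs of conditions follows at once.
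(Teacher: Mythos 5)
The paper offers no proof of this proposition: it is quoted verbatim from Esteves and Pacini, and its ``proof'' is just the citation \cite[Theorem 4.1]{EP}. So there is nothing in the paper to compare your argument against line by line; what you have written is, in substance, a reconstruction of the Esteves--Pacini argument itself. Your first step --- reading off admissibility, negative admissibility and $\psi$-invertibility from the semistability inequalities applied to sub-chains and their complements, using that $P$ avoids the exceptional locus --- is complete and correct. The second step, matching the remaining tests on $Y$ with those on $X$ through a minimization over the subcurves $W$ with $X_W=X'$, is the right strategy, but two points need repair before it is a proof. First, your minimizer $W^{*}$ is described incorrectly: at a boundary node of $X'$ one must adjoin the maximal \emph{initial sub-chain}, on the $X'$ side, of $\mathcal{L}$-degree $-1$, not ``the boundary chains whose $\mathcal{L}$-degree is negative''. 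For an $\mathcal{L}$ that is admissible but not negatively admissible these differ: a length-two chain with degrees $(+1,-1)$ has total degree $0$, yet for the subcurve $X'$ on the $-1$ side the minimum is attained only by adjoining that single component, and $\chi(\mathcal{E}\otimes I_{X'})$ does drop by $\mathrm{rk}(\mathcal{E})$ there. Your lower-bound bookkeeping by elementary moves survives this correction, but the attainment claim must be restated. Second, the identity $\chi(\psi^{*}\mathcal{E}\otimes\mathcal{L}|_{W^{*}})=\chi(\mathcal{E}\otimes I_{X'})$ hinges on $\psi_{*}(\mathcal{L}|_{W^{*}})\cong I_{X'}$ together with the vanishing of $R^{1}$, which you assert (``the chosen boundary chains reproduce node by node the local torsion-free quotient'') rather than prove; this is exactly the local structure theory of the pushforward of an admissible sheaf developed in \cite[Section 3]{EP}, and it is the one computation the whole equivalence rests on. With those two items supplied the argument closes, and it is then essentially the proof of the cited source, carried out rather than invoked.
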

\begin{proof}
\cite[Theorem 4.1, p.70]{EP}
\end{proof} \\

According to Propositions \ref{pro:admissible} and \ref{pro:stability}, to conclude that $\phi_*\widetilde{\mathcal{M}}$ is a rank-$1$, torsion-free sheaf, it suffices to show that $\deg_E \widetilde{\mathcal{M}} \in \{-1,0,1\}$ for all $\phi$-exceptional component $E \subset \widehat{C}$.

Notice that $\left. (p_{2}\phi)^*\mathcal{L}\right\vert_{\widehat{C}}$ has degree $0$ on a $\phi$-exceptional component $E$. In the next lemma we calculate the degree of $\mathcal{I}_{\widetilde{\Delta}/\widetilde{\mathcal{C}^2}}$ on each exceptional component $E \subset \widehat{C}$.

\begin{lemma} \label{lem:diag}
Let $\phi:\widetilde{\mathcal{C}^2} \rightarrow \mathcal{C}^2$ be a good partial desingularization of $\mathcal{C}^2$. If $E$ is a $\phi$-exceptional component such that $\phi(E)=(R,R)$, where $R$ is a node of $C$, then $\deg_E (\mathcal{I}_{\widetilde{\Delta}/\widetilde{\mathcal{C}^2}})=1$.
\end{lemma}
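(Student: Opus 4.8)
The plan is to reduce the assertion to an explicit local computation near $E$, using that $\deg_E(\mathcal{I}_{\widetilde{\Delta}/\widetilde{\mathcal{C}^2}})$ depends only on the restriction of the ideal sheaf to a formal (or étale) neighborhood of $E$. First I would observe that $E$ lies over the diagonal point $(R,R)$, hence is created already by the first step of the good partial desingularization, the blowup of $\mathcal{C}^2$ along $\Delta$; by Proposition \ref{pro:diag}(4) the threefold is regular along $E$ after this single blowup. Every later center in the sequence is the strict transform of a product $Y\times Z$ with $Y\neq Z$, and near $E$ such a center is a principal (Cartier) divisor in the already smooth threefold, so each subsequent blowup is an isomorphism over a neighborhood of $E$ and changes neither $E$ nor $\widetilde{\Delta}$ there. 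Thus it suffices to compute on $\mathrm{Bl}_{\Delta}\mathcal{C}^2$.

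Next I would set up the local model. Since $f$ is a regular smoothing and $R$ is a node, $\mathcal{C}$ is étale-locally $\mathrm{Spec}\,K[[x,y]]$ with $f$ given by $t=xy$, so $\mathcal{C}^2$ near $(R,R)$ is the threefold ordinary double point $\mathrm{Spec}\,K[[x_1,y_1,x_2,y_2]]/(x_1y_1-x_2y_2)$, with $\Delta$ cut out by $x_1=x_2,\ y_1=y_2$. After the linear change $s=x_1+x_2,\ u=x_1-x_2,\ w=y_1+y_2,\ v=y_1-y_2$ the equation becomes $sv+uw=0$ and $\Delta=V(u,v)$, a Weil but non-Cartier divisor through the singular point. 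Blowing up the ideal $(u,v)$ produces a small resolution with charts $v=\xi u,\ w=-s\xi$ (coordinates $s,u,\xi$) and $u=\zeta v,\ s=-w\zeta$ (coordinates $w,v,\zeta$), glued by $\xi\zeta=1$; in either chart the threefold is smooth, and $E=\phi^{-1}(R,R)$ is the resulting $\mathbb{P}^1$, namely the $\xi$-line $\{s=u=0\}$. I would then check that the total transform of $\Delta$ is $\{u=0\}$ in the first chart and $\{v=0\}$ in the second; each is a smooth principal divisor, so $\widetilde{\Delta}$ is Cartier, and being irreducible and dominating $\Delta$ one gets $\widetilde{\Delta}=\{u=0\}$, which visibly contains $E$.

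Finally I would compute the degree directly. Writing $\mathcal{I}_{\widetilde{\Delta}/\widetilde{\mathcal{C}^2}}=\Ocal(-\widetilde{\Delta})$, this line bundle is trivialized by the frame $u$ on the first chart and by the frame $v$ on the second; the identity $u=\zeta v$ on the overlap shows the two frames differ by the factor $\zeta$, which restricts on $E$ to $\zeta=1/\xi$. Hence the rational section equal to the frame $u$ on the $\xi$-chart is regular and nonvanishing for finite $\xi$ and acquires a simple zero at $\xi=\infty$, whence $\deg_E(\mathcal{I}_{\widetilde{\Delta}/\widetilde{\mathcal{C}^2}})=1$, as claimed. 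As a consistency check I would note the conceptual version: $\widetilde{\Delta}\to\Delta$ is a birational morphism of smooth surfaces contracting $E$ to a smooth point of $\Delta\cong\mathcal{C}$, so $E$ is a $(-1)$-curve in $\widetilde{\Delta}$, while $N_{E/\widetilde{\mathcal{C}^2}}\cong\Ocal(-1)^{\oplus2}$ for the conifold small resolution; the normal-bundle sequence $0\to N_{E/\widetilde{\Delta}}\to N_{E/\widetilde{\mathcal{C}^2}}\to N_{\widetilde{\Delta}/\widetilde{\mathcal{C}^2}}|_E\to0$ then gives $\widetilde{\Delta}\cdot E=-2-(-1)=-1$ and $\deg_E\mathcal{I}_{\widetilde{\Delta}/\widetilde{\mathcal{C}^2}}=-(\widetilde{\Delta}\cdot E)=1$.

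The step I expect to be the main obstacle is the bookkeeping forced by the fact that $E\subset\widetilde{\Delta}$: since the curve lies inside the divisor, one cannot naively restrict the local equation of $\widetilde{\Delta}$ to $E$, and the degree must be read off from the gluing of the two charts (equivalently from the normal-bundle computation). Getting the small-resolution charts and the induced transition function right — in particular the change of coordinates that puts the total space of the node into the standard conifold form and exhibits $\Delta$ as the non-Cartier plane — is precisely where the sign and the value $+1$ are determined.
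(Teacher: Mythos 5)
Your argument is correct, but it takes a genuinely different route from the paper. The paper's proof is a specialization argument: it uses that the degree of $\mathcal{I}_{\widetilde{\Delta}/\widetilde{\mathcal{C}^2}}$ is constant on the flat families of subcurves obtained by letting the first coordinate tend to $R$ along $C_i$ and along $C_j$; the limit of the copy of $C_i$ in the fiber is $\widehat{C}_i$ in the first case and $\widehat{C}_i\cup E$ in the second (this is exactly the content of Proposition \ref{pro:strict}(2)--(3)), so the two generic degrees $-1$ and $0$ force $\deg_E(\mathcal{I}_{\widetilde{\Delta}/\widetilde{\mathcal{C}^2}})=0-(-1)=1$. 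You instead reduce to the first blowup (your reduction is sound: after blowing up $\Delta$ the threefold is regular, hence locally factorial, near $E$, so the later centers are Cartier divisors there and those blowups are local isomorphisms) and then compute everything in the standard conifold model, reading the degree off the transition function $\zeta=1/\xi$ of $\Ocal(-\widetilde{\Delta})|_E$. What your approach buys is self-containedness and extra information: you verify directly that $\widetilde{\Delta}$ is Cartier and contains $E$, identify $\widetilde{\Delta}\rightarrow\Delta$ as the blowup of the regular surface $\mathcal{C}$ at $R$, and get the normal-bundle cross-check; the paper's argument is shorter and coordinate-free but leans on the structural facts imported from \cite{CoEP} (flatness of $\rho$ and the description of the fibers of the strict transforms). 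One small blemish: the substitution $s=x_1+x_2$, etc., divides by $2$; since no characteristic assumption is made on $K$, you should instead use, say, $x_1y_1-x_2y_2=x_1(y_1-y_2)+y_2(x_1-x_2)$, which puts the equation in the form $x_1v+y_2u=0$ with $\Delta=V(u,v)$ and the identical small-resolution computation in all characteristics.
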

\begin{proof}
By definition of good partial desingularization, the exceptional component $E$ is obtained after blowing-up the diagonal subscheme $\Delta$. Suppose $R \in C_i \cap C_j$. By Proposition \ref{pro:strict}, $\widetilde{\Delta}$ contains $E$ and, equivalently, $E \subset \widetilde{C_i \times C_j}$. The figure \ref{fig4} illustrates this situation:

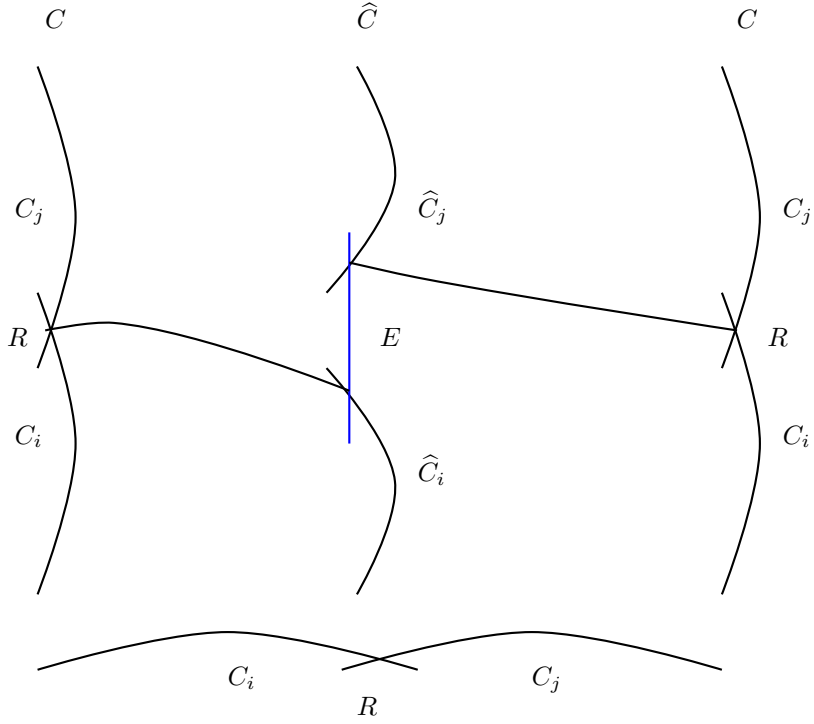
\begin{figure}[!h]
\centering
\begin{pspicture}(0,0)(10,10)
\pscurve(0.5,1)(3,1.5)(5.5,1)
\put(3,0.8){$C_i$}
\pscurve(4.5,1)(7,1.5)(9.5,1)
\put(7,0.8){$C_j$}
\put(4.7,0.4){$R$}
\pscurve(0.5,2)(1,4)(0.5,6)
\pscurve(0.5,5)(1,7)(0.5,9)
\put(0.2,4){$C_i$}
\put(0.1,5.3){$R$}
\put(0.2,7){$C_j$}
\pscurve(9.5,2)(10,4)(9.5,6)
\pscurve(9.5,5)(10,7)(9.5,9)
\put(10.3,4){$C_i$}
\put(10.1,5.3){$R$}
\put(10.3,7){$C_j$}
\pscurve(4.7,2)(5.2,3.5)(4.3,5)
\pscurve(4.3,6)(5.2,7.5)(4.7,9)
\put(5.5,3.5){$\widehat{C}_i$}
\put(5,5.3){$E$}
\put(5.5,7){$\widehat{C}_j$}
\psline[linecolor=blue](4.6,4)(4.6,6.8)
\pscurve(0.6,5.5)(1.5,5.6)(4.6,4.7)
\pscurve(4.6,6.4)(5.5,6.2)(9.7,5.5)
\put(0.6,9.5){$C$}
\put(4.7,9.5){$\widehat{C}$}
\put(9.7,9.5){$C$}
\end{pspicture}
\caption{Local blowup.}\label{fig4}
\end{figure}

We can see that $C_i$ degenerates to $\widehat{C}_i$ on the left side and degenerates to $\widehat{C}_i \cup E$ on the right side. Using the degeneration of $C_i$ to $\widehat{C}_i$ and $\deg_{C_i}(\mathcal{I}_{\widetilde{\Delta}/\widetilde{\mathcal{C}^2}})=-1$, it follows that 
\[
-1=\deg_{C_i}(\mathcal{I}_{\widetilde{\Delta}/\widetilde{\mathcal{C}^2}})=\deg_{\widehat{C}_i}(\mathcal{I}_{\widetilde{\Delta}/\widetilde{\mathcal{C}^2}}).
\]

On the other hand, using the degeneration of $C_i$ to $\widehat{C}_i \cup E$ we get
\[
0=\deg_{C_i}(\mathcal{I}_{\widetilde{\Delta}/\widetilde{\mathcal{C}^2}})=\deg_{\widehat{C}_i \cup E}(\mathcal{I}_{\widetilde{\Delta}/\widetilde{\mathcal{C}^2}},)
\]
so, $\deg_{E}(\mathcal{I}_{\widetilde{\Delta}/\widetilde{\mathcal{C}^2}})=1$. The proof is complete.
\end{proof} \\

To complete the analysis of the degree of the invertible sheaf $\widetilde{\mathcal{M}}$ over the $\phi$-exceptional components contained in $\widehat{C}$, it remains to analyze the sheaf $\widetilde{\mathcal{T}}$.

\begin{lemma}
\label{lem:degzero}Let $E$ be a $\phi$-exceptional component contained in the
fiber $\widehat{C}=\rho^{-1}\left(  R\right)  ,$ where $R\in C_{i}\cap C_{j}$
is a node of $C$. Then,

\begin{description}
\item[(a)] $\deg_{E}\left[  \mathcal{O}_{\widetilde{\mathcal{C}^{2}}}\left(
\widetilde{C_{i}\times Z_{i}}+\widetilde{C_{j}\times Z_{i}}\right)  \right]
=0;$

\item[(b)] $\deg_{E}\left[  \mathcal{O}_{\widetilde{\mathcal{C}^{2}}}\left(
\widetilde{C_{i}\times Z_{i}}+\widetilde{C_{j}\times Z_{j}}\right)  \right]
=\deg_{E}\left[  \mathcal{O}_{\widetilde{\mathcal{C}^{2}}}\left(
\widetilde{C_{j}\times Z_{j}}-\widetilde{C_{j}\times Z_{i}}\right)  \right]  $.
\end{description}
\end{lemma}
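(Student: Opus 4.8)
The plan is to compute these exceptional degrees by exploiting the structure of the divisor $\widetilde{C_i \times Z_i}$ together with Proposition~\ref{pro:fiber}, which explicitly describes the restriction of a Cartier divisor $D = \sum w_{i,k} D_{i,k}$ to the fiber $\widehat{C}$ in terms of the weights. The key observation is that $Z_i$ is a formal sum $\sum_k z_{i,k} C_k$ of components of $C$, so that $\widetilde{C_i \times Z_i}$ decomposes as $\sum_k z_{i,k}\, \widetilde{C_i \times C_k} = \sum_k z_{i,k}\, D_{i,k}$, which is exactly the kind of divisor to which Proposition~\ref{pro:fiber} applies. Restricting to $\widehat{C}$ and extracting the coefficient of $E = E_S$ (with $S = R$, since $\phi(E) = (R,R)$ and $R \in C_i \cap C_j$), formula~(\ref{twister}) tells me that $\deg_E$ is governed by the coefficient $b_S = \sum w_{i,k}$ summed over the two pairs $(i,k)$ with $(R,R) \in \mathcal{N}_{i,k}(\phi)$; since $\deg_E \mathcal{O}_{\widehat{C}}(E_S)$ is computed by the intersection of $E$ with the rest of the fiber.

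For part (a), I would apply this to the divisor $\widetilde{C_i \times Z_i} + \widetilde{C_j \times Z_i}$. The point is that both summands involve the \emph{same} second factor $Z_i$, and the node $R$ lies on $C_i \cap C_j$, so the two branches $C_i$ and $C_j$ over $R$ enter symmetrically. Writing out the $E$-coefficient from~(\ref{twister}), the contributions from $\widetilde{C_i \times Z_i}$ and $\widetilde{C_j \times Z_i}$ should cancel against each other precisely because of how the good partial desingularization routes the exceptional curve $E$ (recall from Proposition~\ref{pro:strict} that exactly one of $\widetilde{C_i \times C_k}$ or $\widetilde{C_j \times C_k}$ contains $E$ while the other meets it transversally, depending on which product was blown up). Summing the relevant weights over the two pairs yields $b_S = 0$, giving $\deg_E = 0$. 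The main obstacle here is bookkeeping: I must keep careful track of which of the two strict transforms $\widetilde{C_i \times C_k}$ and $\widetilde{C_j \times C_k}$ actually contains $E$ versus merely meets it, since that determines the sign of each weight's contribution to $b_S$, and this depends on the chosen good partial desingularization $\phi$.

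For part (b), I would argue purely by additivity and cancellation rather than computing either side outright. Using part (a), I have $\deg_E[\mathcal{O}(\widetilde{C_i \times Z_i} + \widetilde{C_j \times Z_i})] = 0$, hence $\deg_E[\mathcal{O}(\widetilde{C_i \times Z_i})] = -\deg_E[\mathcal{O}(\widetilde{C_j \times Z_i})]$. Then, by linearity of $\deg_E$ on Cartier divisors,
\[
\deg_E\!\left[\mathcal{O}(\widetilde{C_i \times Z_i} + \widetilde{C_j \times Z_j})\right]
= \deg_E\!\left[\mathcal{O}(\widetilde{C_i \times Z_i})\right] + \deg_E\!\left[\mathcal{O}(\widetilde{C_j \times Z_j})\right],
\]
and substituting $\deg_E[\mathcal{O}(\widetilde{C_i \times Z_i})] = -\deg_E[\mathcal{O}(\widetilde{C_j \times Z_i})]$ converts the first term so that the whole expression becomes $\deg_E[\mathcal{O}(\widetilde{C_j \times Z_j} - \widetilde{C_j \times Z_i})]$, which is exactly the claimed identity. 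Thus part (b) is a formal consequence of part (a), and the only genuine work lies in establishing (a) correctly.

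I expect the main difficulty throughout to be the careful case analysis of the good partial desingularization near the diagonal point $(R,R)$: unlike the distinct-node case covered by Proposition~\ref{pro:strict}, here the blowup starts along the diagonal $\Delta$, so I must verify that the weight-routing described in Proposition~\ref{pro:fiber} still applies with $S = R$ and that the symmetry between the $C_i$ and $C_j$ branches over a single node $R$ produces the needed cancellation. Once the coefficient $b_S$ in~(\ref{twister}) is read off correctly, both claims follow from the linearity of the degree function.
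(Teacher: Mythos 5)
Your overall strategy is the same as the paper's: restrict the divisors to $\widehat{C}$ via Proposition \ref{pro:fiber} and compute degrees on $E$, and your derivation of (b) from (a) by linearity of $\deg_E$ is exactly the paper's one-line argument. However, your execution of (a) has two problems, one of scope and one of substance. On scope: you fix $\phi(E)=(R,R)$, but the lemma concerns \emph{every} $\phi$-exceptional component of the fiber $\widehat{C}=\rho^{-1}(R)$; when $R$ is a reducible node, $\widehat{C}\cong C(1)$ contains a component $E_S=\mu^{-1}(S)$ for every reducible node $S$ of $C$, and the later uses of the lemma (Lemma \ref{lem:deg_ttil}, Propositions \ref{lem:RS_igual} and \ref{lem:RS_dif}) need all of these cases. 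The paper's proof accordingly works with a general $E$ over $(R,S)$, $S\in C_k\cap C_l$.

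On substance: your cancellation mechanism is incorrect. Write $Z_i=\cdots+a_kC_k+a_lC_l+\cdots$. For the divisor $D=\widetilde{C_i\times Z_i}+\widetilde{C_j\times Z_i}$, the coefficient $b_S$ of $E=E_S$ produced by formula (\ref{twister}) is the sum of the two weights over the pairs whose strict transforms contain $E$, which in either routing of Proposition \ref{pro:strict} equals $a_k+a_l$, not $0$. Moreover $\deg_E$ is not governed by $b_S$ alone: since $E$ meets $\widehat{C_k}$ and $\widehat{C_l}$ transversally once each and $\deg_E\mathcal{O}_{\widetilde{\mathcal{C}^2}}(E)=-2$, one has
\[
\deg_E\left[\mathcal{O}_{\widetilde{\mathcal{C}^2}}(D)\big\vert_{\widehat{C}}\right]=(\text{coeff of }\widehat{C_k})+(\text{coeff of }\widehat{C_l})-2b_S .
\]
Carrying this out (as the paper does) gives $\deg_E[\mathcal{O}(\widetilde{C_i\times Z_i})]=a_k-a_l$ and $\deg_E[\mathcal{O}(\widetilde{C_j\times Z_i})]=a_l-a_k$: each term is individually nonzero in general, and the vanishing occurs only in their sum. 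So the conclusion of (a) is true and reachable with exactly the tools you name, but the step ``summing the relevant weights over the two pairs yields $b_S=0$, giving $\deg_E=0$'' is false on both counts and needs to be replaced by the full three-coefficient computation. Part (b) as you state it is fine once (a) is repaired.
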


\begin{proof}
(a) Assume that $E$ is such that $\phi(E)=(R,S)$, with $S\in C_{k}\cap$ $C_{l}$. Suppose that $Z_{i}=a_{l}C_{l}+a_{k}C_{k}+...$, where $a_{l},a_{k}\in\mathbb{Z}$.
Without loss generality, suppose that $E$ is contained in the strict transform
of $C_{i}\times C_{l}$ and that $E$ is not contained in the strict transform
of $C_{s}\times C_{l}$. According to Proposition \ref{pro:fiber}, locally around $E$, we have
\[
\left.  \mathcal{O}_{\widetilde{\mathcal{C}^{2}}}\left(  \widetilde
{C_{i}\times Z_{i}}\right)  \right\vert _{\widehat{C}}=\mathcal{O}%
_{\widehat{C}}(\cdots+a_{l}\widehat{C_{l}}+a_{l}E+a_{k}\widehat{C_{k}}+\cdots)
\]
and
\[
\left.  \mathcal{O}_{\widetilde{\mathcal{C}^{2}}}\left(  \widetilde
{C_{j}\times Z_{i}}\right)  \right\vert _{\widehat{C}}=\mathcal{O}%
_{\widehat{C}}(\cdots+a_{l}\widehat{C_{l}}+a_{k}E+a_{k}\widehat{C_{k}}%
+\cdots).
\]
So
\[
\deg_{E}\left[  \mathcal{O}_{\widetilde{\mathcal{C}^{2}}}\left(
\widetilde{C_{i}\times Z_{i}}\right)  \right]  =a_{k}-a_{l}%
\]
and%
\[
\deg_{E}\left[  \mathcal{O}_{\widetilde{\mathcal{C}^{2}}}\left(
\widetilde{C_{j}\times Z_{i}}\right)  \right]  =a_{l}-a_{k}.
\]

Hence%
\[
\deg_{E}\left[  \mathcal{O}_{\widetilde{\mathcal{C}^{2}}}\left(
\widetilde{C_{i}\times Z_{i}}+\widetilde{C_{j}\times Z_{i}}\right)  \right]
=0.
\]

(b) Since%
\[
\widetilde{C_{i}\times Z_{i}}+\widetilde{C_{j}\times Z_{j}}=\left(
\widetilde{C_{i}\times Z_{i}}+\widetilde{C_{j}\times Z_{i}}\right)  +\left(
\widetilde{C_{j}\times Z_{j}}-\widetilde{C_{j}\times Z_{i}}\right)  \text{,}%
\]
the result follows from (a).
\end{proof}

\bigskip

The line bundle $\mathcal{O}_{\widetilde{\mathcal{C}^{2}}}\left(
\widetilde{C_{j}\times Z_{j}}-\widetilde{C_{j}\times Z_{i}}\right)  $ over
$\widetilde{\mathcal{C}^{2}}$ is called the \textit{twister difference} with respect to $i$ and $j$ and it will denoted by $\widetilde{T}_{j-i}$


\section{Twister difference}


Let $C$ be a connected, nodal curve with components $C_{1},...,C_{p}$. Let
$P\in C$ be a fixed smooth point. 


Recall from Section 2 if $I$ is an invertible sheaf on $C$, $E$ is a
polarization on $C$ and $Y$ is a subcurve of $C$, then
\[
\beta_{Y}(I)=\chi(I_{Y})+\frac{\deg_{Y}(E)}{\text{rk}(E)}\text{.}%
\]

\bigskip

\begin{lemma}
\label{lem:beta}Let $I$ be an invertible sheaf on $C$ and let $E$ be a
polarization on $C$. If $Y$ and $Z$ are subcurves of $C$ such that $\dim Y\cap
Z=0$ or $Y \cap Z=\emptyset$, then%
\[
\beta_{Y\cup Z}(I)=\beta_{Y}(I)+\beta_{Z}(I)-\delta_{YZ},
\]
where $\delta_{YZ}=\#(Y\cap Z)$.
\end{lemma}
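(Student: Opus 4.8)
The plan is to reduce the entire statement to an Euler-characteristic computation carried out through a Mayer--Vietoris sequence for the gluing $Y\cup Z$ along the finite set $Y\cap Z$. The first observation I would record is that because $I$ is invertible, its restriction $\left.I\right\vert_Y$ to any subcurve $Y$ is again a line bundle, hence already torsion-free; thus the torsion subsheaf $\mathcal{T}(\left.I\right\vert_Y)$ vanishes and $I_Y=\left.I\right\vert_Y$. In particular $\chi(I_Y)=\chi(\left.I\right\vert_Y)$, so in the definition of $\beta_Y(I)$ I may freely replace $\chi(I_Y)$ by $\chi(\left.I\right\vert_Y)$, and similarly for $Z$ and for $W:=Y\cup Z$. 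This removes all subtlety about torsion and lets me work with honest restrictions of a line bundle.

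Next I would invoke the standard structure sequence for the union. Writing $W=Y\cup Z$, with $Y\cap Z$ a finite (possibly empty) set of nodes of cardinality $\delta_{YZ}$, there is the exact sequence
\[
0\longrightarrow \mathcal{O}_{W}\longrightarrow \mathcal{O}_{Y}\oplus\mathcal{O}_{Z}\longrightarrow \mathcal{O}_{Y\cap Z}\longrightarrow 0 .
\]
Tensoring by the invertible (hence flat) sheaf $\left.I\right\vert_{W}$ preserves exactness and yields
\[
0\longrightarrow \left.I\right\vert_{W}\longrightarrow \left.I\right\vert_{Y}\oplus \left.I\right\vert_{Z}\longrightarrow \left.I\right\vert_{Y\cap Z}\longrightarrow 0 .
\]
Since $\left.I\right\vert_{Y\cap Z}$ is a rank-$1$ sheaf supported on the $\delta_{YZ}$ points of $Y\cap Z$, its Euler characteristic is exactly $\delta_{YZ}$. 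Taking the alternating sum of Euler characteristics (additive on short exact sequences) then gives
\[
\chi(\left.I\right\vert_{W})=\chi(\left.I\right\vert_{Y})+\chi(\left.I\right\vert_{Z})-\delta_{YZ}.
\]
The hypothesis $\dim(Y\cap Z)=0$ (or $Y\cap Z=\emptyset$) is precisely what guarantees that $Y$ and $Z$ share no irreducible component, so this sequence is the correct gluing sequence; when $Y\cap Z=\emptyset$ it degenerates to the direct-sum decomposition and the formula holds with $\delta_{YZ}=0$.

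Finally I would handle the polarization term, which is the easy half. Because $Y$ and $Z$ have no common component, the components of $W$ are the disjoint union of those of $Y$ and those of $Z$, and degree of the vector bundle $E$ is additive over components, so $\deg_{W}(E)=\deg_{Y}(E)+\deg_{Z}(E)$; dividing by $\mathrm{rk}(E)$ preserves this additivity. Adding the two contributions,
\[
\beta_{W}(I)=\chi(\left.I\right\vert_{W})+\frac{\deg_{W}(E)}{\mathrm{rk}(E)}=\beta_{Y}(I)+\beta_{Z}(I)-\delta_{YZ},
\]
which is the claim. I do not expect any serious obstacle here: the only point requiring a moment of care is justifying that the Mayer--Vietoris sequence tensored by $I$ stays exact and that $\left.I\right\vert_{Y\cap Z}$ contributes exactly $\delta_{YZ}$ to the Euler characteristic, both of which follow from $I$ being locally free of rank $1$.
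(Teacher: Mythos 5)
Your proof is correct and follows essentially the same route as the paper: both arguments reduce the claim to the additivity $\chi(I_{Y\cup Z})=\chi(I_Y)+\chi(I_Z)-\delta_{YZ}$ obtained from a standard short exact sequence for the union (the paper uses $0\to I_{Z}(-Y\cap Z)\to I_{Y\cup Z}\to I_{Y}\to 0$, you use the equivalent Mayer--Vietoris sequence $0\to I_{Y\cup Z}\to I_{Y}\oplus I_{Z}\to I_{Y\cap Z}\to 0$), combined with additivity of $\deg(E)$ over components. The extra remark that invertibility of $I$ makes $I_Y=\left.I\right\vert_Y$ is a worthwhile clarification the paper leaves implicit.
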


\begin{proof}
The proof when $Y \cap Z=\emptyset$ is trivial. If $\dim Y\cap
Z=0$, it follows from the exact sequence%
\[
0\rightarrow I_{Z}(-Y\cap Z)\rightarrow I_{Y\cup Z}\rightarrow I_{Y}%
\rightarrow0
\]
that $\chi(I_{Y\cup Z})=\chi(I_{Y})+\chi(I_{Z})-\delta_{YZ}$. As $E$ is a locally free sheaf, it follows that
\[
\frac{\deg_{Y\cup Z}(E)}{\text{rk}(E)}=\frac{\deg_{Y}(E)}{\text{rk}(E)}%
+\frac{\deg_{Z}(E)}{\text{rk}(E)}%
\]
and so
\begin{align*}
\beta_{Y\cup Z}(I)  & =\chi(I_{Y})+\chi(I_{Z})-\delta_{YZ}+\frac{\deg_{Y}%
(E)}{\text{rk}(E)}+\frac{\deg_{Z}(E)}{\text{rk}(E)}\\
& =\beta_{Y}(I)+\beta_{Z}(I)-\delta_{YZ}%
\end{align*}
and the proof is complete.
\end{proof}

\bigskip

Let $C_{i}$ and $C_{j}$ be irreducible components
of $C$ such that $\delta_{i,j}>0$. Consider $Q_{i}\in C_{i},$ $Q_{j}\in C_{j}$
and $P\in C_{1}$ nonsingular points of $C$. Let $T_{i}=\left.\Ocal_{\mathcal{C}}(Z_i)\right\vert _C$ and $T_{j}=\left.\Ocal_{\mathcal{C}}(Z_j)\right\vert _C$
be the twisters such that
\[
M_{i}:=m_{Q_{i}}\otimes L\otimes T_{i}\text{ \ and \ }M_{j}:=m_{Q_{j}}\otimes
L\otimes T_{j}%
\]
are $P$-quasistable with respect to $E$.

Consider%
\[
M:=m_{Q_{j}}\otimes L\otimes T_{i}.
\]
If $Y$ is a subcurve of $C$, we have three possibilities:
\begin{eqnarray}
\beta_{Y}(M)=\beta_{Y}(M_{i})-1,  \text{ if } Y\supset C_{j} \text{ and } Y\nsupseteq
C_{i} \label{beta1}\\
\beta_{Y}(M)=\beta_{Y}(M_{i})+1,  \text{ if } Y\supset C_{i} \text{ and } Y\nsupseteq
C_{j} \label{beta2}\\
\beta_{Y}(M)=\beta_{Y}(M_{i}),  \text{ otherwise}. \label{beta3}
\end{eqnarray}
Let $A$ and $B$ be the following sets of subcurves of $C$:%
\begin{align*}
A  & =\{Y;\beta_{Y}(M)<0\text{ and }\beta_{Y}(M)\leq\beta_{X}(M),\text{
}\forall X\text{ subcurve of }C\};\\
B  & =\{Y;\beta_{Y}(M)=0\text{, }Y\supseteq C_{1}\text{ and }\beta_{Y}%
(M)\leq\beta_{X}(M),\text{ }\forall X\text{ subcurve of }C\}.
\end{align*}
Note that if $A$ and $B$ are empty, then $M$ is $P$-quasistable. We now define
a subcurve $Z_{i,j}$ of $C$ in the following way. If $A\neq\emptyset$, then let $Z_{i,j}$ be a subcurve of $A$ with minimal number of components. If $A=\emptyset$ and $B\neq\emptyset$, we let $Z_{i,j}$ be a subcurve of $B$ with
minimal number of component. 

Notice that, in all situations, $Z_{i,j}$ contains $C_{j}$ and does not contain $C_{i}$.

\bigskip

\begin{proposition}
\label{pro:subcurve}With the notation fixed above, the sheaf $M\otimes
\mathcal{O}_{C}(-Z_{i,j})$ is \ $P$-quasistable.
\end{proposition}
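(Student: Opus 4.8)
The plan is to verify the two numerical conditions of $P$-quasistability directly for the line bundle $M':=M\otimes\mathcal{O}_{C}(-Z_{i,j})$, namely that $\beta_{Y}(M')\geq0$ for every proper subcurve $Y$ and $\beta_{Y}(M')>0$ whenever $P\in Y$ (equivalently $C_{1}\subseteq Y$). The total-degree condition $\beta_{C}(M')=0$ is automatic, because $\mathcal{O}_{C}(-Z_{i,j})$ is a twister, so $\deg M'=\deg M$ and $\beta_{C}(M')=\beta_{C}(M)=0$. Writing $W:=Z_{i,j}$, $Y'=\overline{C-Y}$ and $W'=\overline{C-W}$, the first step is to compute how twisting by $-W$ acts on $\beta$. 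The degree of the twister on each component gives $\deg_{Y}\mathcal{O}_{C}(-W)=\delta_{Y\wedge W,\,Y'\wedge W'}-\delta_{\overline{Y-W},\,\overline{W-Y}}$, and feeding this into the additivity of Lemma \ref{lem:beta} (applied to $Y$, $W$ and $Y\cup W$) yields the identity
\[
\beta_{Y}(M')=\beta_{Y\cup W}(M)+\beta_{Y\wedge W}(M)-\beta_{W}(M)+\delta_{Y\wedge W,\,Y'\wedge W'},
\]
whose error term is nonnegative and which, specialized through Lemma \ref{lem:beta}, also gives the submodularity $\beta_{Y\cup W}(M)+\beta_{Y\wedge W}(M)\leq\beta_{Y}(M)+\beta_{W}(M)$.

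Next I would extract the structural content of the minimality defining $Z_{i,j}$. Set $t:=\beta_{W}(M)$, so $t=-1$ when $A\neq\emptyset$ and $t=0$ with $W\in B$ when $A=\emptyset$. Submodularity shows the minimizers of $\beta(M)$ are closed under $\wedge$ and $\cup$; hence when $A\neq\emptyset$ the subcurve $W$, chosen with the fewest components among all minimizers, is contained in every minimizer and every proper subcurve strictly inside $W$ has $\beta(M)>t$, while when $A=\emptyset$ the same argument inside $B$ shows $W$ is the smallest minimizer containing $C_{1}$. From the comparison $\beta_{Y}(M)=\beta_{Y}(M_{i})+\varepsilon(Y)$ of (\ref{beta1})--(\ref{beta3}) and the $P$-quasistability of $M_{i}$ I would also record $C_{j}\subseteq W$, $C_{i}\not\subseteq W$, and that $C_{1}\not\subseteq W$ when $t=-1$ whereas $C_{1}\subseteq W$ when $W\in B$.

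With these facts the verification splits on whether $W\subseteq Y$. If $W\not\subseteq Y$ then $Y\wedge W\subsetneq W$: when $A\neq\emptyset$ this forces $\beta_{Y\wedge W}(M)\geq0$, which together with $\beta_{Y\cup W}(M)\geq t$ makes the identity give $\beta_{Y}(M')\geq0$; when $A=\emptyset$ the sheaf $M$ is already semistable and the identity gives $\beta_{Y}(M')\geq0$ immediately. In either situation the strict inequality for $P\in Y$ is obtained by improving one of the two summands $\beta_{Y\cup W}(M)$, $\beta_{Y\wedge W}(M)$ through the strict positivity of $\beta(M_{i})$ on subcurves containing $C_{1}$. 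The remaining, delicate case is $W\subseteq Y$: here $Y\wedge W=W$ and $Y'\wedge W'=Y'$, so the identity collapses to
\[
\beta_{Y}(M')=\beta_{Y}(M)+\delta_{W,Y'},
\]
and a priori $\beta_{Y}(M)$ could be as negative as $t=-1$.

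I expect this nested case to be the crux, and it is resolved precisely by the hypothesis $\delta_{i,j}>0$. Whenever $\beta_{Y}(M)<0$ — or, in the quasistability step, $\beta_{Y}(M)=0$ with $P\in Y$ — the comparison with $M_{i}$ forces $C_{i}\not\subseteq Y$, hence $C_{i}\subseteq Y'$; since $C_{j}\subseteq W$, the $\delta_{i,j}$ nodes joining $C_{i}$ and $C_{j}$ all lie in $W\cap Y'$, so $\delta_{W,Y'}\geq\delta_{i,j}>0$. This compensates exactly and gives $\beta_{Y}(M')\geq0$ (respectively $>0$), completing the argument. The point to stress is that bare minimality of $W$ only secures $\beta_{Y}(M')\geq t=-1$; it is the adjacency $\delta_{i,j}>0$ of $C_{i}$ and $C_{j}$ that places the offending nodes into the correction term $\delta_{W,Y'}$ and thereby upgrades the estimate to the bound required for $P$-quasistability.
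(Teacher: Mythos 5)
Your proof is correct, and it reaches the paper's conclusion by a noticeably different organization of what is at bottom the same numerical verification. The paper also argues directly from Lemma \ref{lem:beta}, the twister degree formula and the minimality of $Z_{i,j}$, but it decomposes the test curve as $Y=\overline{Y-Z_{i,j}}\cup(Y\wedge Z_{i,j})$ and splits the cases according to whether $C_i\subseteq Y$, with minimality entering only through the inequalities $\beta_{Y_1}(M)\geq\delta_{Y_1,Z_{i,j}}$ and $\beta_{Z_1}(M)\geq\beta_{Z_{i,j}}(M)$. You instead split on whether $W:=Z_{i,j}$ is contained in $Y$, and you extract more structure from the minimality: the submodularity of $\beta(M)$ shows the minimizers are closed under $\wedge$ and $\cup$, so the one with fewest components sits inside every other and every proper subcurve of it has strictly larger $\beta$. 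That lattice observation, which the paper never states, is what lets you dispose of the case $W\not\subseteq Y$ in one line via your (correct) identity $\beta_Y(M')=\beta_{Y\cup W}(M)+\beta_{Y\wedge W}(M)-\beta_W(M)+\delta_{Y\wedge W,\,Y'\wedge W'}$. The decisive step is the same in both arguments: when the estimate threatens to bottom out at $\beta_Y(M)=-1$, or at $0$ with $P\in Y$, relation (\ref{beta1}) forces $C_i\subseteq Y'$ while $C_j\subseteq Z_{i,j}$, so the $\delta_{i,j}>0$ nodes joining them land in the positive correction term ($\delta_{W,Y'}$ for you, $\delta_{Z_1Z'_{i,j}}-\delta_{Y_1Z_1}$ for the paper) and restore the required bound. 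Two small remarks: in the non-nested case with $A=\emptyset$ the strict inequality $\beta_{Y\wedge W}(M)>0$ really comes from the minimality of $W$ in $B$ (which you did establish) rather than from the quasistability of $M_i$ as your wording suggests; and your formulation handles the degenerate situation $Y\wedge Z_{i,j}=\emptyset$ cleanly, where the paper's blanket claim in its Case (I) that $\delta_{Z_1Z'_{i,j}}-\delta_{Y_1Z_1}\geq1$ is stated a bit too broadly.
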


\begin{proof}
We consider only the case $A\neq\emptyset.$ The case $A=\emptyset$ and
$B\neq\emptyset$ is similar. Define $M^{\prime}=M\otimes\mathcal{O}_{C}(-Z_{i,j})$.

Let $Y$ be a subcurve of $C$. We can write
\[Y=Y_1 \cup Z_1,\]
with $Z_{1}=\overline{Z_{i,j} \cap Y}$ and $Y_{1}=\overline{Y-Z_{i,j}}$. Note that, by the
minimality of $\beta_{Z_{i,j}}(M)$ and by Lemma \ref{lem:beta}, we have
\[
\beta_{Z_{i,j}}(M)\leq\beta_{Z_{i,j}\cup Y_{1}}(M)=\beta_{Z_{i,j}}(M)+\beta_{Y_{1}}%
(M)-\delta_{Y_{1},Z_{i,j}}.
\] 
So, 
\begin{equation}
\beta_{Y_1}(M)-\delta_{Y_1,Z_{i,j}}\geq 0. \label{in1}
\end{equation}

Now,
\begin{align}
\beta_{Y}(M^{\prime})  & =\beta_{Y}(M)+\delta_{Z_{1}Z^{\prime}_{i,j}}-\delta_{Y_{1},Z_{i,j}}\\
& =\beta_{Z_{1}}(M)+\beta_{Y_{1}}(M)-\delta_{Y_{1}Z_{1}}+\delta_{Z_{1}Z^{\prime}_{i,j}}-\delta_{Y_{1},Z_{i,j}}\\
& =(\beta_{Y_{1}}(M)-\delta_{Y_{1}Z_{i,j}})+(\beta_{Z_{1}}(M)+\delta_{Z_{1}Z^{\prime}_{i,j}}-\delta_{Y_{1}Z_{1}})\\
& \geq (\beta_{Y_{1}}(M)-\delta_{Y_{1}Z_{i,j}})+(\beta_{Z_{i,j}}(M)+\delta_{Z_{1}Z^{\prime}_{i,j}}-\delta_{Y_{1}Z_{1}}) \label{in4},
\end{align}
where inequality (\ref{in4}) follows from the minimal property of $\beta_{Z_{i,j}}(M)$. 
Note that, by (\ref{in1}), the first parenthesis in (\ref{in4}) is nonnegative.

We have two cases to consider.

\bigskip

\textbf{(I)} $C_i$ is not a component of $Y$.

If $Z_1=\emptyset$ then, $Y$ does not contain $C_j$. So, $\delta_{Z_1Z^{\prime}_{i,j}}=\delta_{Y_1Z_{i,j}}=0$ and we conclude that $\beta_Y(M^{\prime})=\beta_Y(M) \geq 0$.
 
Suppose that $Z_1 \neq \emptyset$. In this case, since $C_j \subset Z_{i,j}$, we have $C_i \cap Z_{i,j} \neq \emptyset$ and so,
\[\delta_{Z_1Z^{\prime}_{i,j}}-\delta_{Y_{1}Z_{1}} \geq 1.\]
By (\ref{beta1}), we have $\beta_{Z_{i,j}}(M) \geq -1$ and we conclude that the second parenthesis in (\ref{in4}) is non negative too. Thus $\beta_{Y}(M^{\prime}) > 0 $.

Now, if $Y \supset C_1$ and $C_1 \subset Z_{i,j}$ then, by  (\ref{beta1}), $\beta_{Z_{i,j}}(M) > -1$, so, the second parenthesis in (\ref{in4}) is positive and we conclude that $\beta_{Y}(M^{\prime}) > 0 $. If $Y \supset C_1$, $C_1 \not\subset Z_{i,j}$ and $\beta_{Z_{i,j}}(M) > -1$ then, we are done, as above. If $Y \supset C_1$, $C_1 \not\subset Z_{i,j}$ and $\beta_{Z_{i,j}}(M) = -1$ then, since
\[C_1 \subseteq Z_{i,j} \cup Y_1,\]
it follows from (\ref{beta1}) we have $\beta_{Z_{i,j}\cup Y_1}(M) > -1$ and consequently (\ref{in1}) is strict. So, the first parenthesis in (\ref{in4}) is also strict. Thus, $\beta_{Y}(M^{\prime}) \geq 0 $ in any case.

\bigskip

\textbf{(II)} $C_i$ is a component of $Y$.

In this case, the subcurve $Y_1$ is nonempty and the subcurve $Z_{i,j} \cup Y_1$ contains $C_i$ and $C_j$. Thus, by (\ref{beta3}) we have 
\[\beta_{Z_{i,j} \cup Y_1}(M)=\beta_{Z_{i,j}}(M)+\beta_{Y_{1}}%
(M)-\delta_{Y_{1},Z_{i,j}} \geq 0\]
and, by (\ref{beta3}), the inequality is strict if $Y_1 \supset C_1$.

Writing the right side of (\ref{in4}) in the form
\[
(\beta_{Z_{i,j}}(M)+\beta_{Y_{1}}(M)-\delta_{Y_{1}Z_{i,j}})+(\delta_{Z_{1}Z^{\prime}_{i,j}}-\delta_{Y_{1}Z_{1}})
\]
we obtain, by (\ref{in4}), that
\begin{equation}
\beta_{Y}(M^{\prime}) \geq (\beta_{Z_{i,j}}(M)+\beta_{Y_{1}}(M)-\delta_{Y_{1}Z_{i,j}})+(\delta_{Z_{1}Z^{\prime}_{i,j}}-\delta_{Y_{1}Z_{1}}). \label{in5}
\end{equation}

Since the two parenthesis in (\ref{in5}) are non negative, we have $\beta_{Y}(M^{\prime}) \geq 0$. Moreover, if $C_1 \subset Y$ then, the first parenthesis in (\ref{in5}) is strict, so $\beta_{Y}(M^{\prime}) > 0$. Therefore the sheaf $M'$ is $P$-quasistable and the proof is complete.
\end{proof}

\bigskip

\begin{corollary}
Keep the notation of Proposition \ref{pro:subcurve}. Then,
\[
T_j=T_i \otimes \Ocal_{C}({-Z_{i,j})}.
\]
\end{corollary}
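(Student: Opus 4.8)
The plan is to deduce the identity from the \emph{uniqueness} of the twister that brings a given line bundle to $P$-quasistable form, recorded in Section~2 (following \cite[p.~12-13]{CoEP} and \cite[Lemmas 30 and 31]{E01}). Writing $L_j := m_{Q_j}\otimes L$, that result furnishes a unique twister $T$ such that $L_j\otimes T$ is $P$-quasistable with respect to $E$. I will exhibit \emph{two} twisters enjoying this property, namely $T_j$ and $T_i\otimes\mathcal{O}_C(-Z_{i,j})$, and then conclude that they coincide, which is exactly the assertion of the corollary.

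First I would check that $T_i\otimes\mathcal{O}_C(-Z_{i,j})$ is genuinely a twister. By hypothesis $T_i=\left.\mathcal{O}_{\mathcal{C}}(Z_i)\right\vert_C$ with $Z_i$ a formal sum of components of $C$. Since $Z_{i,j}$ is a subcurve of $C$, it is a reduced sum of components $\sum_{C_k\subseteq Z_{i,j}}C_k$, and because $\mathcal{C}$ is regular each component is Cartier in $\mathcal{C}$; hence $Z_{i,j}$ is a Cartier divisor of $\mathcal{C}$ supported on $C$. Therefore $T_i\otimes\mathcal{O}_C(-Z_{i,j})=\left.\mathcal{O}_{\mathcal{C}}(Z_i-Z_{i,j})\right\vert_C$ is again of the form $\left.\mathcal{O}_{\mathcal{C}}(Z)\right\vert_C$ with $Z$ a formal integral combination of components, and it has total degree $0$ (the whole fibre being trivial on $C$), so it belongs to the class of twisters to which the uniqueness statement applies.

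It then remains to recognize the two sheaves as $P$-quasistable stabilizations of $L_j$. On one hand, $M_j=m_{Q_j}\otimes L\otimes T_j=L_j\otimes T_j$ is $P$-quasistable by hypothesis. On the other hand, $M=L_j\otimes T_i$, so
\[
M\otimes\mathcal{O}_C(-Z_{i,j})=L_j\otimes\bigl(T_i\otimes\mathcal{O}_C(-Z_{i,j})\bigr),
\]
and this sheaf is $P$-quasistable by Proposition~\ref{pro:subcurve}. Thus both $T_j$ and $T_i\otimes\mathcal{O}_C(-Z_{i,j})$ are twisters carrying $L_j$ to a $P$-quasistable sheaf, and uniqueness forces $T_j=T_i\otimes\mathcal{O}_C(-Z_{i,j})$. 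The argument is essentially bookkeeping: the only points demanding care are to invoke the uniqueness for the bundle $L_j=m_{Q_j}\otimes L$ (rather than for $L_i$) and to confirm that $T_i\otimes\mathcal{O}_C(-Z_{i,j})$ is a twister, both of which are immediate once Proposition~\ref{pro:subcurve} is available; I therefore expect no genuine obstacle.
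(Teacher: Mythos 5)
Your proposal is correct and follows exactly the paper's own (much terser) argument: apply Proposition \ref{pro:subcurve} to see that $L_j\otimes\bigl(T_i\otimes\mathcal{O}_C(-Z_{i,j})\bigr)$ is $P$-quasistable and then invoke the uniqueness of the twister stabilizing $L_j=m_{Q_j}\otimes L$. The extra check that $T_i\otimes\mathcal{O}_C(-Z_{i,j})$ is itself a twister is a sensible detail the paper leaves implicit.
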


\begin{proof}
It follows from Proposition \ref{pro:subcurve} and unicity of the twister.
\end{proof}

\bigskip

\begin{definition} \label{def:subcurve}
The subcurve $Z_{i,j}$ in the Proposition \ref{pro:subcurve} is called the twister difference subcurve between $j$ and $i$. We define $Z_{i,i}=\emptyset$.
\end{definition}

\bigskip

Note that if $M$ is $P$-quasistable then $Z_{i,j}=C$. Let us go back to the analysis of $\widetilde{\mathcal{T}}$. We have the following

\bigskip

\begin{lemma}
\label{lem:deg_ttil}Let $E$ be a $\phi$-exceptional component of $\widehat{C}%
$. Then,
\[
\deg_{E}\widetilde{\mathcal{T}}=\deg_{E}\left( \widetilde{T}_{j-i}\right)  =\deg
_{E}\left[  \mathcal{O}_{\widetilde{\mathcal{C}^{2}}}\left(  \widetilde
{C_{i}\times Z_{i}}+\widetilde{C_{j}\times Z_{j}}\right)  \right]  .
\]

\end{lemma}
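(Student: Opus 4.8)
The plan is to compute the degree of $\widetilde{\mathcal{T}}$ on a fixed $\phi$-exceptional component $E \subset \widehat{C} = \rho^{-1}(R)$, where $R \in C_i \cap C_j$ is a node of $C$, and show that only the two factors $\widetilde{T}_i$ and $\widetilde{T}_j$ can contribute, and that their combined contribution agrees with the stated twister-difference sheaf. Recall that $\widetilde{\mathcal{T}} = \bigotimes_{k=1}^{p} \widetilde{T}_k$ where $\widetilde{T}_k = \mathcal{O}_{\widetilde{\mathcal{C}^2}}(\widetilde{C_k \times Z_k})$, so by additivity of degree it suffices to analyze each $\deg_E(\widetilde{C_k \times Z_k})$ separately.

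First I would argue that for $k \notin \{i,j\}$ the factor $\widetilde{T}_k$ restricts to degree $0$ on $E$. The key point is that $E$ lies over $(R,S)$ for some node $S$, and $R \in C_i \cap C_j$; the strict transform $\widetilde{C_k \times Z_k}$ for $k \neq i,j$ involves the component $C_k$ in the first factor, which does not pass through $R$. By the local description in Proposition \ref{pro:fiber} (formula (\ref{twister})), the coefficient $a_k$ of a component $\widehat{C_k}$ and the coefficient $b_S$ of an exceptional curve $E_S$ are obtained by summing the $w$-weights over the two $i$ with $R \in C_i$, respectively over the pairs realizing $(R,S) \in \mathcal{N}_{i,k}(\phi)$. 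Since $\widetilde{C_k \times Z_k}$ contributes $w$-weights only in rows indexed by $C_k$, and $R \notin C_k$ for $k \neq i,j$, the first-factor component of this divisor is disjoint from the fiber $\widehat{C}$ along $E$, forcing $\deg_E(\widetilde{T}_k) = 0$. Thus $\deg_E \widetilde{\mathcal{T}} = \deg_E(\widetilde{T}_i \otimes \widetilde{T}_j) = \deg_E\left[\mathcal{O}_{\widetilde{\mathcal{C}^2}}(\widetilde{C_i \times Z_i} + \widetilde{C_j \times Z_j})\right]$, which is the right-hand equality asserted in the lemma.

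Next I would establish the left-hand equality, namely that this common value equals $\deg_E(\widetilde{T}_{j-i})$, where $\widetilde{T}_{j-i} = \mathcal{O}_{\widetilde{\mathcal{C}^2}}(\widetilde{C_j \times Z_j} - \widetilde{C_j \times Z_i})$. This is exactly part (b) of Lemma \ref{lem:degzero}, which states
\[
\deg_E\left[\mathcal{O}_{\widetilde{\mathcal{C}^2}}\left(\widetilde{C_i \times Z_i} + \widetilde{C_j \times Z_j}\right)\right] = \deg_E\left[\mathcal{O}_{\widetilde{\mathcal{C}^2}}\left(\widetilde{C_j \times Z_j} - \widetilde{C_j \times Z_i}\right)\right],
\]
whose proof rests on the identity $\widetilde{C_i \times Z_i} + \widetilde{C_j \times Z_j} = (\widetilde{C_i \times Z_i} + \widetilde{C_j \times Z_i}) + (\widetilde{C_j \times Z_j} - \widetilde{C_j \times Z_i})$ together with part (a), which gives $\deg_E(\widetilde{C_i \times Z_i} + \widetilde{C_j \times Z_i}) = 0$. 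Chaining these equalities yields the full statement.

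The main obstacle I anticipate is the vanishing claim for the indices $k \neq i,j$: one must be careful that even though the \emph{first} factor $C_k$ avoids $R$, the twister $Z_k$ appearing in the \emph{second} factor may well involve components $C_l$ that do pass through the node $S$ below $E$, so a naive ``disjointness'' argument is too crude. The correct bookkeeping is the one supplied by Proposition \ref{pro:fiber}: the exceptional coefficient $b_S$ picks up a nonzero weight only from pairs $(i',k')$ with $(R,S) \in \mathcal{N}_{i',k'}(\phi)$, and since $R \notin C_k$ the divisor $\widetilde{C_k \times Z_k}$ contributes no such weight to $E$. I would verify this explicitly using the two worked formulas displayed after Proposition \ref{pro:fiber} as a model, making sure the coefficient assigned to $E$ by $\widetilde{C_k \times Z_k}$ genuinely cancels. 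Once that local coefficient computation is in hand, additivity of $\deg_E$ and Lemma \ref{lem:degzero} finish the argument immediately.
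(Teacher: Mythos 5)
Your proposal is correct and follows essentially the same route as the paper: decompose $\deg_E\widetilde{\mathcal{T}}$ into the factors $\widetilde{T}_r$, show via the coefficient formula of Proposition \ref{pro:fiber} that the terms with $r\neq i,j$ restrict to degree $0$ on $E$ (since $R\notin C_r$ kills both the $a_k$ and $b_S$ contributions), and invoke Lemma \ref{lem:degzero}(b) for the identification with $\widetilde{T}_{j-i}$. If anything, you are slightly more explicit than the paper about why the second factor $Z_r$ cannot sneak in a contribution through the node $S$, which is a worthwhile clarification.
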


\begin{proof}
As we have
\[
\deg_{E}\widetilde{\mathcal{T}}=\deg_{E}\left[  \mathcal{O}_{\widetilde{\mathcal{C}^{2}}}\left(\widetilde{C_{i}\times Z_{i}}+\widetilde{C_{j}\times Z_{j}}\right)  \right] + \deg_{E}\left[ \Ocal_{\widetilde{\mathcal{C}^{2}}}\left(\sum_{r \neq i,j}\widetilde{C_{r}\times Z_{r}}\right)\right],
\]
it is sufficient to prove that $\deg_{E}\left(  \widetilde{C_{r}\times Z_{r}%
}\right)  =0$ if $r\neq i,j$. Suppose $E=\mu^{-1}(S)$ with $S\in C_{k}\cap
C_{l}$ and $k,l \neq i,j$. According to Proposition \ref{pro:fiber}, we have
\[
\left.  \mathcal{O}_{\widetilde{\mathcal{C}^{2}}}\left(  \widetilde
{C_{r}\times Z_{r}}\right)  \right\vert _{\widehat{C}}=\mathcal{O}%
_{\widehat{C}}(\cdots+0\cdot\widehat{C_{k}}+0\cdot E+0\cdot\widehat{C_{l}%
}+\cdots).
\]
So, $\deg_{E}\left(  \widetilde{C_{r}\times Z_{r}}\right)  =0$, if $r\neq i,j$.
\end{proof}

\bigskip

Recall the sheaf $\widetilde{\mathcal{M}}$ defined in (\ref{eqn1}). Note that if $\mathcal{E}$ is a polarization over $\mathcal{C} \rightarrow B$ then, the vector bundle $\widetilde{\mathcal{E}}=(p_1\phi)^*\mathcal{E}$ is a polarization on the family 
\begin{displaymath}
\xymatrix{
\widetilde{\mathcal{C}^{2}} \ar[r]^{\phi} & \mathcal{C}^{2} \ar[r]^{p_1} & \mathcal{C}. }
\end{displaymath} 
So, we have $\deg_{E}(\widetilde{\mathcal{E}})=0$ and $\beta_E(\widetilde{\mathcal{M}})=\deg_E(\widetilde{\mathcal{M}})+1$ for every $\phi$-exceptional component $E$ of $\widehat{C}$. 

\bigskip

\begin{proposition}
\label{lem:RS_igual}Let $E$ be a $\phi$-exceptional component of $\widehat{C}%
$ with $R \in C_i \cap C_j$ and $E=\phi^{-1}(R,R)$. The following properties hold
\begin{enumerate}[(i)]
\item if $\Ocal_C(Z_{i,j}) \simeq \Ocal_C$ then, $\deg_E{\widetilde{\mathcal{M}}}=1$ and $\beta_E(\widetilde{\mathcal{M}})=2$;
\item otherwise $\deg_E{\widetilde{\mathcal{M}}}=0$ and $\beta_E(\widetilde{\mathcal{M}})=1$
\end{enumerate}
\end{proposition}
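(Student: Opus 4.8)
The plan is to compute $\deg_E\widetilde{\mathcal{M}}$ by exploiting the factorization
\[
\widetilde{\mathcal{M}} = \mathcal{I}_{\widetilde{\Delta}/\widetilde{\mathcal{C}^2}}\otimes (p_2\phi)^*\mathcal{L}\otimes\widetilde{\mathcal{T}}
\]
into invertible sheaves, so that $\deg_E\widetilde{\mathcal{M}} = \deg_E\mathcal{I}_{\widetilde{\Delta}/\widetilde{\mathcal{C}^2}} + \deg_E(p_2\phi)^*\mathcal{L} + \deg_E\widetilde{\mathcal{T}}$. The first two terms are already known: since $\phi(E)=(R,R)$ with $R$ a node, Lemma \ref{lem:diag} gives $\deg_E\mathcal{I}_{\widetilde{\Delta}/\widetilde{\mathcal{C}^2}}=1$, and because $E$ is contracted by $p_2\phi$ the pullback $(p_2\phi)^*\mathcal{L}$ has degree $0$ on $E$. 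By Lemma \ref{lem:deg_ttil} the third term equals $\deg_E\widetilde{T}_{j-i}$, where $\widetilde{T}_{j-i}=\mathcal{O}_{\widetilde{\mathcal{C}^2}}(\widetilde{C_j\times Z_j}-\widetilde{C_j\times Z_i})$ is the twister difference. Thus everything reduces to showing $\deg_E\widetilde{T}_{j-i}=0$ in case (i) and $\deg_E\widetilde{T}_{j-i}=-1$ in case (ii); the statements for $\beta_E$ then follow from the identity $\beta_E(\widetilde{\mathcal{M}})=\deg_E(\widetilde{\mathcal{M}})+1$ recorded just above the Proposition.

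To evaluate $\deg_E\widetilde{T}_{j-i}$ I first observe that $\widetilde{C_j\times Z_j}-\widetilde{C_j\times Z_i}=\widetilde{C_j\times(Z_j-Z_i)}$, since the strict transforms of the products $C_j\times C_r$ are the fixed Cartier divisors $D_{j,r}$ and the assignment $\sum_r m_r C_r\mapsto\sum_r m_r D_{j,r}$ is linear. The Corollary to Proposition \ref{pro:subcurve} gives $\mathcal{O}_C(Z_j-Z_i)\simeq\mathcal{O}_C(-Z_{i,j})$, so $Z_j-Z_i$ and $-Z_{i,j}$ differ by a divisor supported on $C$ that is trivial on $C$; since $\mathcal{C}$ is a regular surface and $C$ is a connected fibre, such a divisor is an integer multiple of the full fibre $C$. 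Because $\deg_E\mathcal{O}_{\widetilde{\mathcal{C}^2}}(\widetilde{C_j\times C})=0$ — one checks this directly from formula (\ref{twister}), as $\widetilde{C_j\times C}=\sum_r D_{j,r}$ restricts on $\widehat{C}$ to a divisor with coefficient $1$ on every $\widehat{C_k}$ and on $E$, whence $\deg_E=1+1-2\cdot1=0$ — the value $\deg_E\widetilde{T}_{j-i}$ depends only on the line-bundle class $\mathcal{O}_C(Z_j-Z_i)$, and I may compute it with $-Z_{i,j}$ in place of $Z_j-Z_i$.

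In case (i) this class is trivial, so by the invariance just established $\deg_E\widetilde{T}_{j-i}=0$ and hence $\deg_E\widetilde{\mathcal{M}}=1$, $\beta_E(\widetilde{\mathcal{M}})=2$. In case (ii) I apply Proposition \ref{pro:fiber}: writing $Z_{i,j}=\sum_r c_r C_r$ with $c_r\in\{0,1\}$, $c_j=1$ and $c_i=0$, formula (\ref{twister}) identifies $\left.\mathcal{O}_{\widetilde{\mathcal{C}^2}}(\widetilde{C_j\times Z_{i,j}})\right\vert_{\widehat{C}}$ with $\mathcal{O}_{\widehat{C}}(\sum_k c_k\widehat{C_k}+\sum_S b_S E_S)$, where the coefficient of $E=E_R$ is $b_R=c_i=0$ because, by Proposition \ref{pro:strict}, the two pairs contributing over the diagonal point $(R,R)$ are exactly $(i,j)$ and $(j,i)$. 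Since $E$ is a smooth rational curve meeting only its two neighbours $\widehat{C_i}$ and $\widehat{C_j}$, transversally at one point each, with $\deg_E\mathcal{O}_{\widehat{C}}(E)=-2$, the same intersection computation as in the proof of Lemma \ref{lem:degzero} yields $\deg_E\mathcal{O}_{\widetilde{\mathcal{C}^2}}(\widetilde{C_j\times Z_{i,j}})=c_i+c_j-2c_i=c_j-c_i=1$, so $\deg_E\widetilde{T}_{j-i}=-1$ and therefore $\deg_E\widetilde{\mathcal{M}}=0$, $\beta_E(\widetilde{\mathcal{M}})=1$.

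The main obstacle I anticipate is the bookkeeping in the middle steps: one must legitimately pass from the divisor $Z_j-Z_i$, which is what literally enters the definition of $\widetilde{T}_{j-i}$, to the subcurve $-Z_{i,j}$, which only represents the same line-bundle class, and this requires both the vanishing $\deg_E\mathcal{O}_{\widetilde{\mathcal{C}^2}}(\widetilde{C_j\times C})=0$ and the fact that the kernel of $Z\mapsto\mathcal{O}_C(Z)$ on divisors supported on $C$ is exactly $\mathbb{Z}\cdot C$. After that, the degree computation on $E$ is the delicate point, since it depends on correctly reading off the coefficient of the exceptional component from formula (\ref{twister}) and on the self-intersection $\deg_E\mathcal{O}_{\widehat{C}}(E)=-2$; both are precisely of the kind already carried out in the proof of Lemma \ref{lem:degzero}, so I would model this part of the argument on that computation.
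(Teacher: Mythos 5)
Your proof is correct and follows essentially the same route as the paper's: both reduce $\deg_E\widetilde{\mathcal{M}}$ to $\deg_E\mathcal{I}_{\widetilde{\Delta}/\widetilde{\mathcal{C}^2}}+\deg_E\widetilde{\mathcal{T}}$ via Lemmas \ref{lem:diag} and \ref{lem:deg_ttil} and then evaluate the twister difference on $E$ using formula (\ref{twister}) of Proposition \ref{pro:fiber}. The only difference is that you explicitly justify replacing $Z_j-Z_i$ by $-Z_{i,j}$ (modulo integer multiples of the fibre $C$, which contribute degree $0$ on $E$), a step the paper performs tacitly.
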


\begin{proof}
(i) Since $\Ocal_C(Z_{i,j}) \simeq \Ocal_C$, we have $\deg_E \widetilde{\mathcal{T}}=\deg_E \widetilde{T}_{j-i}=0$ and hence $\deg_E \widetilde{\mathcal{M}}=\deg_E \mathcal{I}_{\widetilde{\Delta}/\widetilde{\mathcal{C}^2}}$. By Lemma \ref{lem:diag}, we have $\deg_E \mathcal{I}_{\widetilde{\Delta}/\widetilde{\mathcal{C}^2}}=1$, from which we get
\[
\deg_E{\widetilde{\mathcal{M}}}=1 \text{ and } \beta_E(\widetilde{\mathcal{M}})=2
\]

(ii) In this case the subcurve $Z_{i,j}$ contains $C_{j}$ and does not contain $C_{i}$. By Proposition \ref{pro:strict}, we have
\[
E \subset \widetilde{C_i \times C_j} \text{ and } E \subset \widetilde{C_j \times C_i}.
\] 
So, by Proposition \ref{pro:fiber},
\[
\left.  \mathcal{O}_{\widetilde{\mathcal{C}^{2}}}(-\widetilde{C_{j}\times
Z_{i,j}})\right\vert _{\widehat{C}}=\mathcal{O}_{\widehat{C}}(\cdots0\cdot
\widehat{C_{i}}+0\cdot E-1\cdot\widehat{C_{j}}+\cdots).
\]
Hence 
\[
\deg_{E}\widetilde{\mathcal{M}}=\deg_{E}\mathcal{I}_{\widetilde
{\Delta}/\widetilde{\mathcal{C}^{2}}}+\deg_{E}\widetilde{\mathcal{T}}=1+(-1)=0
\]
and $\beta_{E}(\widetilde{\mathcal{M}})=1$.
\end{proof}

\bigskip

\begin{proposition}
\label{lem:RS_dif}
Let $E$ be a $\phi$-exceptional component of $\widehat{C}$. Suppose $R\in C_{i}\cap C_{j}$ and $E=\phi^{-1}(R,S)$ with $S \in C_k \cap C_l$ and $S\neq R$. The following properties hold
\begin{enumerate}[(i)]
\item if $\Ocal_C(Z_{i,j}) \simeq \Ocal_C$ then, $\deg_E{\widetilde{\mathcal{M}}}=0$ and $\beta_E(\widetilde{\mathcal{M}})=1$;
\item if $\Ocal_C(Z_{i,j})$ is not trivial then, the following properties hold: 
\begin{enumerate}[(a)]
\item If $C_{k}$ and $C_{l}$ are components of $Z_{i,j}$, then $\deg_E \widetilde{\mathcal{M}}=0$ and $\beta_{E}(\widetilde{\mathcal{M)}}=1$;
\item If $C_{k}$ and $C_{l}$ are components of $Z_{i,j}'$, then $\deg_E \widetilde{\mathcal{M}}=0$ and $\beta_{E}(\widetilde{\mathcal{M)}}=1$;
\item If $C_{k}\subset Z_{i,j}$ and $C_{l}\subset Z_{i,j}'$, then $\deg_E\widetilde{\mathcal{M}} \in \{-1,1\}$ and  $\beta
_{E} (\widetilde{\mathcal{M}})\in \{0,2\}$
\end{enumerate}
\end{enumerate}
\end{proposition}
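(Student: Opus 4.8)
The strategy is the same as in Proposition \ref{lem:RS_igual}: compute $\deg_E\widetilde{\mathcal{M}}$ by splitting it as
\[
\deg_E\widetilde{\mathcal{M}}=\deg_E\mathcal{I}_{\widetilde{\Delta}/\widetilde{\mathcal{C}^2}}+\deg_E\widetilde{\mathcal{T}},
\]
using that the pullback $(p_2\phi)^*\mathcal{L}$ has degree $0$ on every $\phi$-exceptional component. Since here $E=\phi^{-1}(R,S)$ with $R\neq S$ lies \emph{off} the diagonal, the strict transform $\widetilde{\Delta}$ does not contain $E$, so by the same degeneration argument underlying Lemma \ref{lem:diag} (now with no blowup of the diagonal needed at $(R,S)$) we get $\deg_E\mathcal{I}_{\widetilde{\Delta}/\widetilde{\mathcal{C}^2}}=0$. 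Thus the entire computation reduces to $\deg_E\widetilde{\mathcal{T}}$, and by Lemma \ref{lem:deg_ttil} this equals $\deg_E\widetilde{T}_{j-i}=\deg_E\bigl[\mathcal{O}_{\widetilde{\mathcal{C}^2}}(\widetilde{C_i\times Z_i}+\widetilde{C_j\times Z_j})\bigr]$.

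First I would dispose of case (i): if $\mathcal{O}_C(Z_{i,j})\simeq\mathcal{O}_C$ then $T_i\simeq T_j$ by the Corollary, so $\widetilde{T}_{j-i}=\mathcal{O}_{\widetilde{\mathcal{C}^2}}(\widetilde{C_j\times Z_j}-\widetilde{C_j\times Z_i})$ is trivial along $E$, giving $\deg_E\widetilde{\mathcal{T}}=0$ and hence $\deg_E\widetilde{\mathcal{M}}=0$, $\beta_E(\widetilde{\mathcal{M}})=1$. For case (ii) I would write $Z_{i,j}=\sum a_k C_k$ as in Lemma \ref{lem:degzero} and apply the restriction formula (\ref{twister}) of Proposition \ref{pro:fiber} to read off the coefficient $b_S$ of $E_S=E$. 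The key point is that the contribution to $\deg_E\widetilde{T}_{j-i}$ depends only on which of the two branches $C_k,C_l$ at $S$ lie inside $Z_{i,j}$, precisely because the branch on which $E$ sits (as dictated by whether $E\subset\widetilde{C_?\times C_l}$) determines how the coefficients $a_k,a_l$ enter $b_S$.

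The heart of the argument is the three-way split (a), (b), (c). In subcases (a) and (b)—where $C_k$ and $C_l$ lie on the \emph{same} side of the partition $C=Z_{i,j}\cup Z_{i,j}'$—the two branch-coefficients of $Z_{i,j}$ at $S$ are equal, so by the coefficient computation in Lemma \ref{lem:degzero}(a)–(b) the twister-difference contributions at the two branches cancel and $\deg_E\widetilde{\mathcal{T}}=0$, forcing $\deg_E\widetilde{\mathcal{M}}=0$ and $\beta_E(\widetilde{\mathcal{M}})=1$. In subcase (c), where $C_k\subset Z_{i,j}$ but $C_l\subset Z_{i,j}'$, the branch-coefficients differ by exactly $1$, so $\deg_E\widetilde{T}_{j-i}=\pm1$ according to which branch $E$ attaches to; combined with $\deg_E\mathcal{I}_{\widetilde{\Delta}/\widetilde{\mathcal{C}^2}}=0$ this yields $\deg_E\widetilde{\mathcal{M}}\in\{-1,1\}$ and correspondingly $\beta_E(\widetilde{\mathcal{M}})\in\{0,2\}$.

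The main obstacle I anticipate is bookkeeping the two sign possibilities in (c): the value $+1$ versus $-1$ is governed by whether $(R,S)\in\mathcal{N}_{i,l}(\phi)$ or $(R,S)\in\mathcal{N}_{j,k}(\phi)$, i.e.\ which product $\widetilde{C_i\times C_l}$ or $\widetilde{C_j\times C_k}$ the exceptional curve $E$ sits inside (Proposition \ref{pro:strict}(2) and Figures \ref{fig1}–\ref{fig2}). I would therefore carefully fix, as in the proof of Lemma \ref{lem:degzero}(a), the hypothesis $E\subset\widetilde{C_?\times C_l}$ and track how swapping the roles of $k$ and $l$ flips the sign, so that both values $\{-1,1\}$ genuinely occur and the stated membership $\deg_E\widetilde{\mathcal{M}}\in\{-1,1\}$ is the sharp conclusion rather than a single determinate value.
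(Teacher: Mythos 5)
Your proposal is correct and follows essentially the same route as the paper: you reduce to $\deg_E\widetilde{\mathcal{M}}=\deg_E\widetilde{\mathcal{T}}=\deg_E\widetilde{T}_{j-i}$ using that $E$ lies off the diagonal, and then read the degree off the restriction formula of Proposition \ref{pro:fiber}, with the same three-way case split on how $C_k$ and $C_l$ sit relative to $Z_{i,j}$ and the same observation that in case (c) the sign is decided by which strict transform contains $E$. The paper's proof is just the explicit writing-out of the coefficient bookkeeping you describe.
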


\begin{proof}
Note that $\deg_{E}\mathcal{I}_{\widetilde{\Delta}/\widetilde{\mathcal{C}^{2}}}=0$. So, in this case, we have
\[
\deg_E \widetilde{\mathcal{M}}=\deg_E \widetilde{\mathcal{T}}=\deg_E \widetilde{T}_{j-i}.
\]

(i) If $\Ocal_C(Z_{i,j}) \simeq \Ocal_C$, then $\deg_E \widetilde{\mathcal{T}}=0$. So, $\deg_E \widetilde{\mathcal{M}}=0$ and $\beta_E(\widetilde{\mathcal{M}})=1$.

(ii) (a) According to Proposition \ref{pro:fiber}, we have two possibilities for $\left.\mathcal{O}_{\widetilde{\mathcal{C}^{2}}}(-\widetilde{C_{j} \times Z_{i,j}%
})\right\vert _{\widehat{C}}$. 

If $E\subset\widetilde{C_{i}\times C_{l}}$ and $E\subset\widetilde{C_{j}\times C_{k}}$ then,%
\[
\left.  \mathcal{O}_{\widetilde{\mathcal{C}^{2}}}(-\widetilde{C_{j}\times
Z_{i,j}})\right\vert _{\widehat{C}}=\mathcal{O}_{\widehat{C}}(\cdots-1\cdot
\widehat{C_{k}}-1\cdot E-1\cdot\widehat{C_{l}}+\cdots).
\]
So, $\deg_{E}\widetilde{\mathcal{M}}=0$ and $\beta_{E}(\widetilde{\mathcal{M}})=1$.

If $E\subset\widetilde{C_{i}\times C_{k}}$ and $E\subset\widetilde
{C_{j}\times C_{l}}$ then,%
\[
\left.  \mathcal{O}_{\widetilde{\mathcal{C}^{2}}}(-\widetilde{C_{j}\times
Z_{i,j}})\right\vert _{\widehat{C}}=\mathcal{O}_{\widehat{C}}(\cdots-1\cdot
\widehat{C_{k}}-1\cdot E-1\cdot\widehat{C_{l}}+\cdots).
\]
So, $\deg_{E}\widetilde{\mathcal{M}}=0$ and $\beta_{E}(\widetilde{\mathcal{M}})=1$.

(b) In this case we have%
\[
\left.  \mathcal{O}_{\widetilde{\mathcal{C}^{2}}}(-\widetilde{C_{j}\times
Z_{i,j}})\right\vert _{\widehat{C}}=\mathcal{O}_{\widehat{C}}(\cdots+0\cdot
\widehat{C_{k}}+0\cdot E+0\cdot\widehat{C_{l}}+\cdots).
\]
So, $\deg_{E}\widetilde{\mathcal{M}}=0$ and $\beta_{E}(\widetilde{\mathcal{M}})=1.$

(c) In this case, we have two possibilities for $\left.  \mathcal{O}%
_{\widetilde{\mathcal{C}^{2}}}(-\widetilde{Z_{i,j}\times C_{j}})\right\vert
_{\widehat{C}}$. 

If $E\subset\widetilde{C_{i}\times C_{k}}$ and $E\subset\widetilde
{C_{j}\times C_{l}}$ then,
\[
\left.  \mathcal{O}_{\widetilde{\mathcal{C}^{2}}}(-\widetilde{C_{j}\times
Z_{i,j}})\right\vert _{\widehat{C}}=\mathcal{O}_{\widehat{C}}(\cdots-1\cdot
\widehat{C_{k}}+0\cdot E+0\cdot\widehat{C_{l}}+\cdots).
\]
So, $\deg_{E}\widetilde{\mathcal{M}}=-1$ and $\beta_{E}(\widetilde{\mathcal{M}})=0$.

If $E\subset\widetilde{C_{i}\times C_{l}}$ and $E\subset\widetilde
{C_{j}\times C_{k}}$ then,
\[
\left.  \mathcal{O}_{\widetilde{\mathcal{C}^{2}}}(-\widetilde{C_{j}\times
Z_{i,j}})\right\vert _{\widehat{C}}=\mathcal{O}_{\widehat{C}}(\cdots-1\cdot
\widehat{C_{k}}-1\cdot E+0\cdot\widehat{C_{l}}+\cdots).
\]
So, $\deg_{E}\widetilde{\mathcal{M}}=1$ and $\beta_{E}(\widetilde{\mathcal{M}})=2$.
 
Thus, the proof is complete.
\end{proof}

\bigskip

\begin{proposition}
\label{cor:betas}Keep the notation of Proposition \ref{lem:RS_dif} and suppose that $\Ocal_C(Z_{i,j})$ is not trivial. Let $Y$ be a subcurve of $C$ and let $E$ be a
$\phi$-exceptional component of $\widehat{C}$. Suppose $E=\phi^{-1}(R,S)$, with $S \in Y \cap Y'$ and $R \neq S$.
\begin{enumerate}[(a)]
\item If $Y\subset Z_{i,j}$ and $E\subset\widetilde{C_{i}\times Y}$, then
$\beta_{E}(\widetilde{\mathcal{M}})\in\{0,1\}$;
\item If $Y\subset Z_{i,j}$ and $E\subset\widetilde{C_{j}\times Y}$, then
$\beta_{E}(\widetilde{\mathcal{M}})\in\{1,2\}$;
\item If $Y\subset Z_{i,j}'$ and $E\subset\widetilde{C_{i}\times Y}$, then
$\beta_{E}(\widetilde{\mathcal{M}})\in\{1,2\}$;
\item If $Y\subset Z_{i,j}'$ and $E\subset\widetilde{C_{j}\times Y}$, then
$\beta_{E}(\widetilde{\mathcal{M}})\in\{0,1\}$.
\end{enumerate}
\end{proposition}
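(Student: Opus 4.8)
The statement concerns a $\phi$-exceptional component $E=\phi^{-1}(R,S)$ over a pair of \emph{distinct} nodes $R\in C_i\cap C_j$ and $S\in Y\cap Y'$, and asks us to bound $\beta_E(\widetilde{\mathcal{M}})$ in each of four configurations. The key observation, already recorded before this proposition, is that over such an $E$ the factor $(p_2\phi)^*\mathcal{L}$ contributes degree $0$, the diagonal ideal $\mathcal{I}_{\widetilde{\Delta}/\widetilde{\mathcal{C}^2}}$ contributes degree $0$ (since $R\neq S$), and $\deg_E\widetilde{\mathcal{E}}=0$, so that $\beta_E(\widetilde{\mathcal{M}})=\deg_E\widetilde{\mathcal{M}}+1=\deg_E\widetilde{\mathcal{T}}+1$. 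By Lemma \ref{lem:deg_ttil} we have $\deg_E\widetilde{\mathcal{T}}=\deg_E\widetilde{T}_{j-i}=\deg_E\mathcal{O}_{\widetilde{\mathcal{C}^2}}(-\widetilde{C_j\times Z_{i,j}})$ up to the terms handled in Proposition \ref{lem:RS_dif}. Thus the whole computation reduces to reading off $\deg_E\mathcal{O}_{\widetilde{\mathcal{C}^2}}(-\widetilde{C_j\times Z_{i,j}})$ from the local formula in Proposition \ref{pro:fiber}, which is exactly the bookkeeping carried out in the proof of Proposition \ref{lem:RS_dif}.

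The plan is to treat each of the four cases (a)--(d) by specializing the two subcases of Proposition \ref{lem:RS_dif} according to whether the branch $C_k$ (or $C_l$) lies in $Z_{i,j}$ or in $Z_{i,j}'$. Concretely, I would fix the containment $S\in Y\cap Y'$ and translate the hypotheses ``$Y\subset Z_{i,j}$'' or ``$Y\subset Z_{i,j}'$'' into statements about which of the branches through $S$ belong to $Z_{i,j}$. Since the degree of $\mathcal{O}_{\widetilde{\mathcal{C}^2}}(-\widetilde{C_j\times Z_{i,j}})$ on $E$ is governed, via formula (\ref{twister}), by the coefficients $b_S$ attached to the branches meeting $S$, the sign of the contribution depends precisely on whether $E\subset\widetilde{C_i\times Y}$ or $E\subset\widetilde{C_j\times Y}$ and on whether $Y\subset Z_{i,j}$ or $Y\subset Z_{i,j}'$. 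In cases (a) and (d) the relevant local coefficient will give $\deg_E\widetilde{\mathcal{M}}\in\{-1,0\}$, hence $\beta_E\in\{0,1\}$; in cases (b) and (c) it will give $\deg_E\widetilde{\mathcal{M}}\in\{0,1\}$, hence $\beta_E\in\{1,2\}$. The underlying asymmetry is the same one exhibited in part (ii)(c) of Proposition \ref{lem:RS_dif}, where placing $C_k$ in $Z_{i,j}$ versus $Z_{i,j}'$ flipped the degree between $-1$ and $1$.

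The main technical obstacle I anticipate is organizing the four cases cleanly against the two geometric alternatives for how $E$ sits relative to the strict transforms (the ``$E\subset\widetilde{C_i\times C_k}$ and $E\subset\widetilde{C_j\times C_l}$'' versus ``$E\subset\widetilde{C_i\times C_l}$ and $E\subset\widetilde{C_j\times C_k}$'' dichotomy of Proposition \ref{pro:strict}), so that each stated $\beta_E$-range genuinely captures \emph{both} possibilities rather than pinning down a single value. This is why the conclusions are stated as two-element sets: within a fixed case, say (a), the two incidence patterns of $E$ produce the two values $0$ and $1$, and one must verify that no other value can occur. I would therefore tabulate, for each of (a)--(d), the local expansion of $\mathcal{O}_{\widehat{C}}$ exactly as in the proof of Proposition \ref{lem:RS_dif}, read off the coefficient of $E$, and record the resulting $\deg_E\widetilde{\mathcal{M}}$ and $\beta_E(\widetilde{\mathcal{M}})=\deg_E\widetilde{\mathcal{M}}+1$. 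The argument is thus essentially a careful case-by-case application of Propositions \ref{pro:strict}, \ref{pro:fiber} and \ref{lem:RS_dif}, with no new ideas beyond correctly matching each configuration to its local twister contribution.
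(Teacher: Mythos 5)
Your overall strategy --- reducing everything to $\deg_E\widetilde{\mathcal{M}}=\deg_E\widetilde{T}_{j-i}$ (via Lemma \ref{lem:deg_ttil} and the vanishing of $\deg_E\mathcal{I}_{\widetilde{\Delta}/\widetilde{\mathcal{C}^2}}$ on $E$ when $R\neq S$) and then reading off the coefficient of $E$ in the local expansion of $\left.\Ocal_{\widetilde{\mathcal{C}^2}}(-\widetilde{C_j\times Z_{i,j}})\right\vert_{\widehat{C}}$ case by case --- is exactly the paper's route: the paper writes $S\in C_k\cap C_l$ with $C_k\subset Y$ and $C_l\subset Y'$, and in each of (a)--(d) quotes Proposition \ref{lem:RS_dif}(ii)(a) or (ii)(c).

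However, you misidentify the dichotomy that produces the two-element sets. You assert that ``within a fixed case, say (a), the two incidence patterns of $E$ produce the two values $0$ and $1$.'' In case (a) the hypothesis $E\subset\widetilde{C_i\times Y}$ already pins down the incidence pattern: since $C_l\subset Y'$ is not a component of $Y$, it forces $E\subset\widetilde{C_i\times C_k}$ and hence $E\subset\widetilde{C_j\times C_l}$, so only one of the two alternatives of Proposition \ref{pro:strict} can occur. The genuine source of the ambiguity is that $Y\subset Z_{i,j}$ constrains only the branch $C_k\subset Y$ of $S$; the other branch $C_l\subset Y'$ may lie either in $Z_{i,j}$ (then Proposition \ref{lem:RS_dif}(ii)(a) applies and $\beta_E(\widetilde{\mathcal{M}})=1$) or in $Z_{i,j}'$ (then (ii)(c) applies with the incidence pattern fixed as above, giving $\beta_E(\widetilde{\mathcal{M}})=0$). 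A tabulation organized over incidence patterns, as you propose, would yield only one value per case and would miss the configuration covered by (ii)(a) entirely. The fix is small --- take ``position of $C_l$ relative to $Z_{i,j}$'' rather than ``incidence pattern of $E$'' as the branching variable --- but this is the only nontrivial content of the proposition, so it needs to be right.
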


\begin{proof}
Let $C_k$ and $C_l$ be irreducibles components of $C$ such that $C_k \subset Y$, $C_l \subset Y'$ and $S \in C_k \cap C_l$.

(a) If $C_{k}$ and $C_{l}$ are components of $Z_{i,j}$, then, by Proposition
\ref{lem:RS_dif}(ii)(a), $\beta_{E}(\widetilde{\mathcal{M}})=1$. If $C_{k}$ and $C_{l}$ are components of $Z_{i,j}$ and $Z_{i,j}'$, respectively, then, by hypothesis, $E\subset\widetilde{C_{i}\times C_{k}}$ and according to Proposition \ref{lem:RS_dif}(ii)(c), we have $\beta_{E}(\widetilde{\mathcal{M}})=0$.%

(b) If $C_{k}$ and $C_{l}$ are components of $Z_{i,j}$ then, by Proposition
\ref{lem:RS_dif}(ii)(a), $\beta_{E}(\widetilde{\mathcal{M}})=1$. If $C_{k}$ and $C_{l}$ are components of $Z_{i,j}$ and $Z_{i,j}'$, respectively, then, by hypothesis, $E\subset\widetilde{C_{j}\times C_{k}}$ and according to Proposition \ref{lem:RS_dif}(ii)(c), we have $\beta_{E}(\widetilde{\mathcal{M}})=2$.

The cases (c) and (d) are similar and the proof is complete.
\end{proof}

\bigskip

\begin{corollary} \label{cor:tor}
The sheaf $\phi_* \widetilde{\mathcal{M}}$ is relatively torsion-free and rank-$1$ and its formation commutes with base changes.
\end{corollary}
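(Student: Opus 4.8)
The plan is to deduce the statement from Proposition \ref{pro:admissible}(2), applied to the semistable modification $\phi$ of the family $p_1:\mathcal{C}^2\rightarrow\mathcal{C}$ together with the invertible sheaf $\widetilde{\mathcal{M}}$. First I would record the two hypotheses needed to invoke that proposition. On one hand, $\phi:\widetilde{\mathcal{C}^2}\rightarrow\mathcal{C}^2$ is a (small) semistable modification of $p_1:\mathcal{C}^2\rightarrow\mathcal{C}$: by Proposition \ref{pro:fiber} each fiber $\widehat{C}=\rho^{-1}(R)$ is $C$-isomorphic to $C$, to $C_R$, or to $C(1)$ according as $R$ is smooth, an irreducible node, or a reducible node, and each of these is a quasistable modification of $C$ (that is, $\eta\equiv 1$). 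In particular every chain of rational curves appearing in the admissibility criterion is a single smooth rational curve, namely a $\phi$-exceptional component $E\subset\widehat{C}$. On the other hand, $\widetilde{\mathcal{M}}$ is invertible, being the tensor product of the invertible sheaves $\mathcal{I}_{\widetilde{\Delta}/\widetilde{\mathcal{C}^2}}$, $(p_2\phi)^*\mathcal{L}$ and $\widetilde{\mathcal{T}}$.

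The core of the argument is then to verify that $\widetilde{\mathcal{M}}$ is $\phi$-admissible, i.e.\ that $\deg_E\widetilde{\mathcal{M}}\in\{-1,0,1\}$ for every $\phi$-exceptional component $E$; this is exactly the criterion already flagged in the discussion preceding Lemma \ref{lem:diag}. This verification has essentially been carried out: writing $E=\phi^{-1}(R,S)$ with $R\in C_i\cap C_j$, the case $S=R$ is settled by Proposition \ref{lem:RS_igual} (yielding $\deg_E\widetilde{\mathcal{M}}\in\{0,1\}$), and the case $S\neq R$ by Proposition \ref{lem:RS_dif} (yielding $\deg_E\widetilde{\mathcal{M}}\in\{-1,0,1\}$). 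I would also note explicitly that the irreducible-node case is subsumed in this analysis: there $C_i=C_j$, so $Z_{i,i}=\emptyset$, the twister difference $\mathcal{O}_C(Z_{i,i})\simeq\mathcal{O}_C$ is trivial, and the computation collapses to $\deg_E\widetilde{\mathcal{M}}=\deg_E\mathcal{I}_{\widetilde{\Delta}/\widetilde{\mathcal{C}^2}}=1$ by Lemma \ref{lem:diag}. Thus in every case the degree lies in $\{-1,0,1\}$, so $\widetilde{\mathcal{M}}$ is $\phi$-admissible.

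With admissibility in hand, the conclusion is immediate from Proposition \ref{pro:admissible}(2): since $\widetilde{\mathcal{M}}$ is $\phi$-admissible, $\phi_*\widetilde{\mathcal{M}}$ is a relatively torsion-free, rank-$1$ sheaf on $\mathcal{C}^2/\mathcal{C}$ of relative degree $d=\deg\widetilde{\mathcal{M}}$ whose formation commutes with base change (and moreover $\mathrm{R}^1\phi_*\widetilde{\mathcal{M}}=0$), which is precisely the assertion.

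The main obstacle I anticipate is bookkeeping rather than conceptual: confirming that the case analysis over the $\phi$-exceptional components is genuinely exhaustive. Concretely, I would make sure that every chain entering the admissibility condition of Proposition \ref{pro:admissible} is one of the single exceptional $\mathbb{P}^1$'s classified by Proposition \ref{pro:fiber}, and that the degenerate situations (irreducible nodes, $S=R$, and trivial twister difference) are correctly folded into the hypotheses of Propositions \ref{lem:RS_igual} and \ref{lem:RS_dif}. Once that exhaustiveness is transparent, the corollary follows as a direct application of Proposition \ref{pro:admissible}.
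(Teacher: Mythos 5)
Your proposal is correct and follows essentially the same route as the paper: the paper's proof likewise reduces the corollary to checking that $\widetilde{\mathcal{M}}$ is $\phi$-admissible via Proposition \ref{pro:admissible} and then cites Propositions \ref{lem:RS_igual} and \ref{lem:RS_dif} for the degree computations on the exceptional components. Your additional bookkeeping (the irreducible-node case via Lemma \ref{lem:diag} and the verification that $\phi$ is a small semistable modification) only makes explicit what the paper leaves implicit.
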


\begin{proof}
By Proposition \ref{pro:admissible}, we need to show that $\widetilde{\mathcal{M}}$ is $\phi$-admissible and this follows from Propositions \ref{lem:RS_igual} and \ref{lem:RS_dif}.
\end{proof}


\section{Degree-1 Abel maps}


Recall some notation introduced in the previous sections. Let $\phi:\widetilde
{\mathcal{C}^{2}}\rightarrow\mathcal{C}^{2}$ be a good partial
desingularization of $\mathcal{C}^{2}$, $\mu:=\left.  \left(  p_{2}\circ
\phi\right)  \right\vert _{\widehat{C}}:\widehat{C}\rightarrow C,$ and $\rho=p_{1}\circ\phi$, where $p_{1}$ and $p_{2}$ are, respectively, the projections of $\mathcal{C}^{2}$ onto the first and second factors.

Let $R\in C_{i}\cap C_{j}$ be a fixed reduced node of $C$. We know that
$\widehat{C}=\rho^{-1}\left(  R\right)  =\mu^{-1}(C)$. Let $\Delta
\subset\mathcal{C}^{2}$ be the diagonal subscheme and let $\widetilde{\Delta}$
be the strict transform of $\Delta$ (via $\phi$).

We will prove that the sheaf $\phi_* \widetilde{\mathcal{M}}$ is $\sigma$-quasistable, where
\begin{equation}
\widetilde{\mathcal{M}}:=\mathcal{I}_{\widetilde{\Delta}/\widetilde{\mathcal{C}^{2}}%
}\otimes\left(  p_{2}\phi\right)  ^{\ast}\mathcal{L}\otimes\widetilde
{\mathcal{T}}, \label{eqn2}%
\end{equation}
is a sheaf over $\widetilde{\mathcal{C}^{2}}/\mathcal{C}$ with $\mathcal{L}$ is a line bundle over $\mathcal{C}/B$, $\widetilde{\mathcal{T}}=\bigotimes\limits_{i=1}^{p}\widetilde{T_{i}}$ and $T_{i}$ is the twister for each component $C_{i}$ of $C$.

Let $\sigma:\mathcal{C}\rightarrow\mathcal{C}^{2}$ be a section through the
smooth locus of $f:\mathcal{C} \rightarrow B$ such that $\sigma(0)=P$. Let $\varphi$ be the restriction of $\phi$ to the inverse image of the smooth locus of
$\mathcal{C}^{2}$. Since $\varphi$ is an isomorphism, there exists a lifting
of $\sigma$ to $\widetilde{\mathcal{C}^{2}}$, which we also denote by $\sigma
$. We will denote $\widehat{P}:=\phi^{-1}(P)$, which is a smooth point of $\widehat{C}$

A key tool to prove $\sigma$-quasistability of $\widetilde{\mathcal{M}}$ is the following result.

\begin{proposition} \label{pro:beta2}
Let $\psi:\mathcal{Y} \rightarrow \mathcal{C}^2$ be a good partial desingularization of $\mathcal{C}^2$. Let $\mathcal{L}$ and $\mathcal{M}$ be $\psi$-admissible invertible sheaves on $\mathcal{Y}$. Let $Y$ and $X$ be fibers, respectively of

\begin{displaymath}
\xymatrix{
\mathcal{Y} \ar[r]^{\psi} & \mathcal{C}^{2} \ar[r]^{p_i} & \mathcal{C} }
\end{displaymath} 
and of

\begin{displaymath}
\xymatrix{
\mathcal{C}^{2} \ar[r]^{p_i} & \mathcal{C} }
\end{displaymath} 
such that $Y=\psi^{-1}(X)$, where $p_i$ is the projection onto the $i$-th factor. Let $L$ and $M$ be, respectively, the restrictions of $\mathcal{L}$ and $\mathcal{M}$ to $Y$ and assume that
\[L \otimes M^{-1}=\Ocal_Y \left(\sum_i a_iE_i \right),\]
where the sum runs over all $E_i \subset Y$ contracted by $\psi$ and $a_i \in \mathbb{Z}$. 

Then, $\psi_*(\mathcal{L}) \cong \psi_*(\mathcal{M})$.
\end{proposition}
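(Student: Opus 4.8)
The plan is to analyze $\psi_* L$ and $\psi_* M$ node by node on $X$ and to exhibit them as the pushforwards of \emph{the same} line bundle on a common partial normalization of $X$. Since $L$ and $M$ are $\psi$-admissible, Proposition \ref{pro:admissible}(2) gives that $\psi_* L$ and $\psi_* M$ are relatively torsion-free rank-$1$ sheaves of relative degrees $\deg L$ and $\deg M$, with $\mathrm{R}^1\psi_*=0$; because $L\otimes M^{-1}=\Ocal_Y(\sum_i a_iE_i)$ is a twister (total degree $0$, cf.\ Proposition \ref{pro:fiber}), we have $\deg L=\deg M$, so the two pushforwards have equal degree. Moreover the divisor $\sum_i a_iE_i$ is supported on the $\psi$-exceptional curves, so it is \emph{canonically} trivial off those curves; hence $L$ and $M$ are canonically identified on the complement of the $E_i$, which maps isomorphically under $\psi$ onto $X$ minus its nodes. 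Thus $\psi_* L$ and $\psi_* M$ are canonically isomorphic away from the nodes of $X$, and the whole content is to extend this isomorphism across each node.

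The key local input is the behaviour on one exceptional curve. Fix a node $N$ of $X$ and put $E:=\psi^{-1}(N)$; by Proposition \ref{pro:fiber}, $E$ is a smooth rational curve meeting the rest of $Y$ transversally in exactly two points, one on each of the two branch components $\widehat{C}_i,\widehat{C}_j$ through $N$. Since $\Ocal_Y(E)$ is a twister its total degree on $Y$ vanishes, and together with $\deg_{\widehat{C}_i}\Ocal_Y(E)=\deg_{\widehat{C}_j}\Ocal_Y(E)=1$ this forces $\deg_E\Ocal_Y(E)=-2$. As the remaining exceptional curves are disjoint from $E$,
\[
\deg_E L-\deg_E M=a_N\deg_E\Ocal_Y(E)=-2a_N,
\]
an even number lying in $\{-2,0,2\}$ by admissibility, whence $a_N\in\{-1,0,1\}$. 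The parity is decisive: $\deg_E L$ and $\deg_E M$ are both $0$ or both in $\{-1,1\}$. By Proposition \ref{pro:admissible}(3), $\psi_* L$ is locally free at $N$ exactly when $\deg_E L=0$, and likewise for $M$; hence $\psi_* L$ and $\psi_* M$ share the same non-locally-free locus $\mathcal{N}_0=\{N:\deg_E L\neq0\}$, and whenever $a_N\neq0$ one has $\{\deg_E L,\deg_E M\}=\{-1,+1\}$.

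Let $\nu:X'\to X$ be the partial normalization of $X$ at $\mathcal{N}_0$. By the structure of torsion-free rank-$1$ sheaves on a nodal curve we may write $\psi_* L=\nu_*L'$ and $\psi_* M=\nu_*M'$ for line bundles $L',M'$ on $X'$, so it suffices to prove $L'\cong M'$. Off $\nu^{-1}(\mathcal{N}_0)$ the canonical identification of the first paragraph already gives $L'=M'$, so $L'\otimes(M')^{-1}$ is supported on $\nu^{-1}(\mathcal{N}_0)$, and we must check the canonical rational isomorphism extends with neither zero nor pole over each separated point. This is local on a branch $\widehat{C}_i$ at a node $N\in\mathcal{N}_0$: writing $q:=E\cap\widehat{C}_i$ and using $\Ocal_Y(E)|_{\widehat{C}_i}=\Ocal_{\widehat{C}_i}(q)$, one gets $L|_{\widehat{C}_i}=M|_{\widehat{C}_i}(a_Nq)$. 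On the other hand, inspecting the sections of an admissible line bundle across $E$ shows that forming $\psi_*$ twists the bundle on the branch by $\Ocal(-q)$ when the degree on $E$ is $-1$ and leaves it unchanged when the degree is $+1$. Running the two cases $a_N=\pm1$, for which $(\deg_E L,\deg_E M)$ equals $(-1,+1)$ respectively $(+1,-1)$ (and the trivial case $a_N=0$, where $L=M$ near $E$), one checks that the extra twist $a_Nq$ on $L$ is exactly cancelled by the difference of the two extension rules, so $L'$ and $M'$ have the same order along $q$. Thus the canonical isomorphism extends over all of $\nu^{-1}(\mathcal{N}_0)$, giving $L'\cong M'$ and hence $\psi_* L\cong\psi_* M$.

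The main obstacle is the third paragraph: the precise bookkeeping of how $\psi_*$ extends an admissible line bundle across an exceptional $\Pbb^1$, and the verification that the branch twist and the change of extension rule cancel. The other delicate point is the self-intersection $\deg_E\Ocal_Y(E)=-2$ underlying the parity in the second paragraph; it rests on $\Ocal_Y(E)$ being a genuine twister, which is guaranteed by the regularity of $\widetilde{\mathcal{C}^2}$ along $E$ from Proposition \ref{pro:fiber} and the degree formula (\ref{twister}). Once these local facts are secured, the global conclusion is formal, since off $\mathcal{N}_0$ the two line bundles literally coincide as subsheaves of the constant sheaf of rational functions.
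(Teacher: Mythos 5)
Your argument is correct, but it is worth saying up front that the paper does not actually prove this proposition: its ``proof'' is a bare citation of \cite[Proposition 3.2, p.13]{EP}. What you have written is a self-contained reconstruction, and it holds together. The two hinges you identify are exactly the right ones. First, $\deg_E\Ocal_Y(E)=-2$: this follows from $\Ocal_Y(E)$ having total degree $0$ (it is a twister, by the regularity of $\widetilde{\mathcal{C}^2}$ along $E$ from Proposition \ref{pro:fiber} and formula (\ref{twister})) together with degree $1$ on each of the two branches through the attachment points, and disjointness of distinct exceptional curves then gives $\deg_EL-\deg_EM=-2a_N$; combined with admissibility this correctly forces $a_N\in\{-1,0,1\}$ and, when $a_N\neq0$, $\{\deg_EL,\deg_EM\}=\{-1,+1\}$. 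Second, the extension rule for $\psi_*$ across a single exceptional $\Pbb^1$: since $H^0(\Pbb^1,\Ocal(-1))=0$ a local section must vanish at both attachment points (whence the twist by $\Ocal(-q)$ on each branch), while $H^0(\Pbb^1,\Ocal(1))$ surjects onto the fibers at the two attachment points (whence no condition, no twist); your case check that the branch twist $a_Nq$ coming from $\Ocal_Y(E)|_{\widehat C_i}=\Ocal_{\widehat C_i}(q)$ cancels against the difference of extension rules is right in both cases $a_N=\pm1$. Your insistence that all identifications are induced by the one canonical rational section of $\Ocal_Y(\sum a_iE_i)$, so that the local isomorphisms automatically glue, is the correct way to close the argument. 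Two cosmetic caveats: Proposition \ref{pro:admissible}(3) is stated globally, so strictly you are using its (true, and proved the same way) local version at a single node; and $L|_{\widehat C_i}=M|_{\widehat C_i}(a_Nq)$ should be read locally near $q$, since other exceptional curves may meet $\widehat C_i$ elsewhere --- neither affects the proof.
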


\begin{proof}
\cite[Proposition 3.2, p.67]{EP}
\end{proof}

\bigskip

Recall that, by Proposition \ref{pro:fiber}, $\widehat{C}=\rho^{-1}(R)$ consists of one of the following types: $\widehat{C} \cong C$ if $R$ is a smooth point; $\widehat{C} \cong C_R$ if $R$ is an irreducible node or $\widehat{C} \cong C(1)$ if $R$ is a reducible node of $C$. In this way, each connected subcurve $\widehat{Y}$ of
$\widehat{C}$ is of the form:%
\[
\widehat{Y}=A\cup B\cup D,
\]
where,

\begin{itemize}
\item $A=\bigcup_{k=1}^{r}\widehat{C}_{i_{k}},$ with $\{i_{1},\cdots,i_{r}\}\subset\{1,\cdots,p\}$ and $\mu\left(  \widehat{C}_{i_{k}}\right)  \cong C_{i_{k}}$ with $C_{i_{k}}$ an irreducible component of $C$;
\item $B=\bigcup E_{i_{l}},$ with $E_{i_{l}}$ a smooth rational component which is equal to $\mu^{-1}(R)$ for some node $R$ of $Y=\bigcup^r_{k=1}C_{i_k}$;
\item $D=\bigcup E_{i_{m}},$ with $E_{i_{m}}$ a smooth rational component which is equal to $\mu^{-1}(R)$ for some node $R \in Y\cap Y'$.
\end{itemize}

\bigskip

In this case, we say that $\widehat{Y}$ is a $Y$\textit{-lifting}. Note that each
subcurve $Y$ gives rise to more than one subcurve $\widehat{Y}$, however a given $\widehat{Y}$ is the $Y$-lifting of exactly one subcurve $Y$ of $C$.

Our goal is to prove that $\phi_* \widetilde{\mathcal{M}}$ is $\sigma$-quasistable. Since by Corollary \ref{cor:tor} the formation of $\phi_* \widetilde{\mathcal{M}}$ commutes with base change, it suffices to show that the sheaf $\phi_* \left(\left.\widetilde{\mathcal{M}} \right\vert _{\widehat{C}}\right)$ is $P$-quasistable for every fiber $\widehat{C}$ of $\rho: \widetilde{\mathcal{C}^2} \rightarrow \mathcal{C}$.

Fix a fiber $\widehat{C}$ of $\rho$. We define the sheaf
\[
\mathcal{G}:=\left.\widetilde{\mathcal{M}} \right\vert _{\widehat{C}} \left( \sum_i E_i \right),
\]
where the sum is taken over all $\phi$-exceptional component $E_i$ of $\widehat{C}$ such that $\beta_{E_i}(\widetilde{\mathcal{M}})=2$.

By definition of $\mathcal{G}$ and by Propositions \ref{lem:RS_igual} and \ref{lem:RS_dif}, we have $\beta_E(\widetilde{\mathcal{M}}) \in \{ 0,1,2 \}$ and hence $\beta_E(\mathcal{G}) \in \{ 0,1 \}$. Indeed, suppose that $\beta_E(\widetilde{\mathcal{M}})=2$. Since $\deg_E(\widetilde{\mathcal{E}})=0$ we have  $\beta_E(\mathcal{G})=\chi(\mathcal{G}|_E)=\deg_E(\mathcal{G}|_E) + 1=\deg_E(\widetilde{\mathcal{M}})-2+1=\beta_E(\widetilde{\mathcal{M}})-2=0$.

\begin{proposition} \label{pro:quasi}
The sheaf $\mathcal{G}$ is $\widehat{P}$-quasistable.
\end{proposition}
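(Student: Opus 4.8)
The plan is to verify directly the two families of inequalities defining $\widehat{P}$-quasistability of the line bundle $\mathcal{G}$ on the nodal curve $\widehat{C}$, with respect to the polarization $\widetilde{\mathcal{E}}=(p_1\phi)^*\mathcal{E}$: that is, $\beta_{\widehat{Y}}(\mathcal{G})\geq 0$ for every proper subcurve $\widehat{Y}\subset\widehat{C}$, with strict inequality whenever $\widehat{P}\in\widehat{Y}$. Since $\deg_E\widetilde{\mathcal{E}}=0$ on every $\phi$-exceptional component and $\beta$ is additive in the sense of Lemma~\ref{lem:beta}, one first reduces to connected $\widehat{Y}$: a genuinely disconnected subcurve has $\beta$ equal to the sum over its connected pieces, so nonnegativity and the strict condition for the piece containing $\widehat{P}$ pass to the union.

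The first step is to record the behaviour of $\mathcal{G}$ on the exceptional bridges. By definition $\mathcal{G}=\left.\widetilde{\mathcal{M}}\right\vert_{\widehat{C}}\otimes\mathcal{O}_{\widehat{C}}(\sum E_i)$, the sum ranging over the bridges with $\beta_{E_i}(\widetilde{\mathcal{M}})=2$. Because distinct bridges are disjoint and each meets the rest of $\widehat{C}$ in exactly two points, the twist drops the degree by $2$ on each such $E_i$ and raises it by $1$ on each adjacent $\widehat{C}_k$; combined with Propositions~\ref{lem:RS_igual} and \ref{lem:RS_dif} this yields $\deg_E\mathcal{G}\in\{-1,0\}$, equivalently $\beta_E(\mathcal{G})\in\{0,1\}$, on every exceptional component $E$. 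Thus $\mathcal{G}$ is negatively $\phi$-admissible; in particular no exceptional component contributes a negative $\beta$.

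Next I would feed the lifting decomposition $\widehat{Y}=A\cup B\cup D$ into Lemma~\ref{lem:beta}, splitting $\beta_{\widehat{Y}}(\mathcal{G})$ into the contribution of the non-exceptional part $A$ (mapping isomorphically onto $Y=\mu(A)\subset C$), minus the connecting-node costs, plus the contributions of the interior bridges $B$ and the boundary bridges $D$. The part carried by $A$ is controlled by the value $\beta_Y$ of the $P$-quasistable sheaf $M_i=m_{Q_i}\otimes L\otimes T_i$ attached to a branch at $R$, which is $\geq 0$ and is strictly positive when $P\in Y$. For the bridges I would invoke the sharpened estimates of Proposition~\ref{cor:betas}, which fix $\beta_E(\mathcal{G})$ according to whether the image node lies in $Z_{i,j}$ or in $Z_{i,j}'$ and whether $E$ sits in $\widetilde{C_i\times Y}$ or $\widetilde{C_j\times Y}$; this is exactly the data encoding the $-1$ incurred at each node where a bridge is glued.

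The verification then becomes a case analysis according to whether $C_i,C_j$ lie in $\widehat{Y}$, whether $\mathcal{O}_C(Z_{i,j})$ is trivial, and whether $\widehat{P}\in\widehat{Y}$; in each case the nonnegativity (resp.\ strict positivity) of the base term supplied by $P$-quasistability of $M_i$ is combined with the bridge bounds of Proposition~\ref{cor:betas} to conclude. The main obstacle I expect is the bridges with $\beta_E(\mathcal{G})=0$, which by Proposition~\ref{cor:betas} occur precisely over the nodes separating $Z_{i,j}$ from $Z_{i,j}'$: attaching such a bridge costs $-1$, or $-2$ when it is interior to $\widehat{Y}$, through Lemma~\ref{lem:beta}, so a zero-bridge threatens the inequality. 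One must check that these zero-bridges appear only on the side where $\beta_Y(M_i)$ carries enough slack to absorb the deficit, and that the strict inequality demanded by $\widehat{P}\in\widehat{Y}$ is never lost. This is exactly the point at which the particular twist defining $\mathcal{G}$, adding back precisely the $\beta=2$ bridges, is indispensable.
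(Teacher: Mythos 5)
Your setup is sound and you have correctly identified both the mechanism (twisting by the $\beta=2$ bridges forces $\beta_E(\mathcal{G})\in\{0,1\}$ on every $\phi$-exceptional component, so $\mathcal{G}$ is negatively $\phi$-admissible) and the danger point (the zero-bridges over the nodes separating $Z_{i,j}$ from $Z_{i,j}'$, each of which costs $1$ or $2$ through Lemma~\ref{lem:beta}). But the proof stops exactly where the real work begins: you write that ``one must check that these zero-bridges appear only on the side where $\beta_Y(M_i)$ carries enough slack to absorb the deficit,'' and this check is never performed. That absorption is the entire content of the proposition, and it is not a routine case split. The paper's argument splits $Y=Y_1\cup Y_2$ with $Y_1\subset Z_{i,j}$ and $Y_2\subset Z_{i,j}'$, chooses the specific liftings $\widetilde{Y_1}=\widetilde{C_i\times Y_1}\cap\widehat{C}$ and $\widetilde{Y_2}=\widetilde{C_j\times Y_2}\cap\widehat{C}$ (which by Proposition~\ref{cor:betas} contain only bridges with $\beta_E(\widetilde{\mathcal{M}})\in\{0,1\}$, so that $\beta_{\widetilde{Y_k}}(\mathcal{G})=\beta_{Y_k}(\widetilde{\mathcal{M}})$), proves two reduction claims showing that attaching or removing the remaining bridges can only help, and then closes with the chain $\beta_{\widehat{Y}}(\mathcal{G})\geq\beta_{Y_1}(\widetilde{\mathcal{M}})+\beta_{Y_2}(\widetilde{\mathcal{M}})-\#(Y_1\cap Y_2)\geq 0$, the last inequality coming from quasistability on nearby fibers and the count $\#(Y_1\cap Y_2)\geq\#D_0+\#D_1$. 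None of this combinatorial bookkeeping appears in your proposal.

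A second, related gap: you propose to bound the contribution of the non-exceptional part $A$ by $\beta_Y(M_i)$ for ``the'' $P$-quasistable sheaf $M_i$ attached to a branch at $R$. Over a reducible node $R\in C_i\cap C_j$ there are two relevant sheaves, $M_i$ and $M_j$, differing by the twister difference $\mathcal{O}_C(-Z_{i,j})$, and the restriction of $\mathcal{G}$ to the strict transforms $\widehat{C}_k$ agrees with one or the other depending on whether $C_k$ lies in $Z_{i,j}$ or in $Z_{i,j}'$. A single $\beta_Y(M_i)$ does not control $\beta_A(\mathcal{G})$ for a subcurve $Y$ meeting both sides; you need precisely the decomposition $Y=Y_1\cup Y_2$ and then an argument that the crossing nodes $Y_1\cap Y_2$ (where the zero-bridges live) are compensated. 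Until you supply the analogue of the paper's Claims 1 and 2 and the final inequality, including the verification that strictness survives when $\widehat{P}\in\widehat{Y}$, the proof is an outline rather than a proof.
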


\begin{proof}
If $\widehat{C}$ is a fiber over a smooth point $R \in C_i$ then, by Proposition \ref{pro:fiber}, $\widehat{C} \cong C$. In this case $\mathcal{G} \cong \widetilde{\mathcal{M}}| _{C}$ and $\widetilde{\mathcal{T}} \cong \Ocal_{\widetilde{\mathcal{C}^2}}(\widetilde{C_i \times Z_{i}})$. Therefore,
\[
\mathcal{G} \cong \left.\widetilde{\mathcal{M}} \right\vert _{C} \cong m_R \otimes L \otimes T_i
\]
which is $\widehat{P}$-quasistable.

We can assume that $\widehat{C}$ is the fiber over a node of $C$. Let $\widehat{Y}$ a proper subcurve of $\widehat{C}$. We have to prove that $\beta_{\widehat{Y}}(\mathcal{G}) \geq 0$ and $\beta_{\widehat{Y}}(\mathcal{G}) > 0$ if $\widehat{P} \in \widehat{Y}$. For this we will compare $\beta_{\bullet}(\mathcal{G})$ with $\beta_{\bullet}({\widetilde{\mathcal{M}}}|_C)$ where $C$ is a fiber over a smooth point. In the rest of this proof we will denote $\beta_{\bullet}({\widetilde{\mathcal{M}}}|_C)$ by $\beta_{\bullet}(\widetilde{\mathcal{ M}})$. We have to consider two cases.

\bigskip
\textbf{(I)} $\widehat{C}$ is the fiber over an irreducible node $R$.

In this case, by Proposition \ref{pro:fiber}, there is only one $\phi$-exceptional component $E$ contained in $\widehat{C}$. Suppose $R \in C_i$. We have three possibilities for a subcurve $\widehat{Y} \subset \widehat{C}$.

If $\widehat{Y}=E$ then, since $R$ is an irreducible node, we have
\[ \deg_E(\widetilde{\mathcal{T}})=0 \text{ and } \deg_E(\widetilde{\mathcal{M}})=\deg_E(\mathcal{I}_{{\widetilde{\Delta}}/\widetilde{\mathcal{C}^2}})=1.\]
It follows that $\beta_{\widehat{Y}}(\widetilde{\mathcal{M}})=2$. By definition of the sheaf $\mathcal{G}$, we have $\beta_{\widehat{Y}}(\mathcal{G})=0$.

If $\widehat{Y}$ is a $Y$-lifting and $Y \not\supset C_i$ then, $\widehat{Y} \not\supset E$. Hence $\widehat{Y} \cong Y$ and
\[
\beta_{\widehat{Y}}(\mathcal{G})=\beta_{Y}(\widetilde{\mathcal{M}}) \geq 0.
\]

If $\widehat{Y}$ is a $Y$-lifting and $Y \supset C_i$ then, we have two subcases. First, if $E \subset \widehat{Y}$ then, by the flatness of $\rho$ we have $\beta_{\widehat{Y}}(\mathcal{G})=\beta_{Y}(\widetilde{\mathcal{M}}) \geq 0$. Second, if $E \not\subset \widehat{Y}$ then, the first subcase implies $\beta_{\widehat{Y} \cup E}(\mathcal{G}) \geq 0$. Since $\beta_E (\mathcal{G}) = 0$, it follows from Lemma \ref{lem:beta} that $\beta_{\widehat{Y}}(\mathcal{G})+\beta_E(\mathcal{G})-2 \geq 0$ and, therefore, $\beta_{\widehat{Y}}(\mathcal{G}) \geq 2$. 

\bigskip
\textbf{(II)} $\widehat{C}$ is the fiber over a reducible node $R \in C_i \cap C_j$.

In this case, by Proposition \ref{pro:fiber}, $\widehat{C} \cong C(1)$ where $C(1)$ is obtained from $C$ by replacing each reducible node by an $\phi$-exceptional component.

If $\widehat{Y}=E$ with $E$ a $\phi$-exceptional component then $\beta_E(\mathcal{G}) \in \{ 0,1 \}$.

If $\widehat{Y}$ contains components which are not $\phi$-exceptional then, let $Y$ be the subcurve of $C$ such that $\widehat{Y}$ is a $Y$-lifting. Either $Y=C$ or $Y$ is a proper subcurve of $C$.

If $Y=C$ then, we can write 
\[ \widehat{C}=\widehat{Y} \cup B_0 \cup B_1,\]
where $B_0,B_1$ are sets of $\phi$-exceptional components of $\widehat{C}$ such that $\beta_E (\mathcal{G})=0$ for every $E \in B_0$ and $\beta_E (\mathcal{G})=1$ for every $E \in B_1$. Notice that, since $Y=C$, by definition of the curve $\widehat{C}$, each $\phi$-exceptional component contained either $B_0$ or $B_1$ intersects $\widehat{Y}$ in two points. So, by Lemma \ref{lem:beta}, we have
\[
0=\beta_{\widehat{C}}(\mathcal{G})=\beta_{\widehat{Y}}(\mathcal{G})-2\#B_0 - \#B_1.
\]
Since $\widehat{Y}$ is proper, at least one of $B_0$ or $B_1$ is nonempty and so, $\beta_{\widehat{Y}}(\mathcal{G})>0$.

If $Y$ is a proper subcurve of $C$ then, we can write $Y=Y_1 \cup Y_2$, with $Y_1 \subset Z_{i,j}$ and $Y_2 \subset Z_{i,j}'$, where $Z_{i,j}$ is the twister difference subcurve with respect to $i$ and $j$ (see Definition \ref{def:subcurve}). Notice that $Y_1$ or $Y_2$ may be empty, that is, $Y$ may be contained in $Z_{i,j}$ or in $Z_{i,j}'$. We can write $\widehat{Y}=\widehat{Y_1} \cup \widehat{Y_2}$, where $\widehat{Y_k}$ is a $Y_k$-lifting for $k=1,2$. Let
\[
\widetilde{Y_1}:=\widetilde{C_i \times Y_1} \cap \widehat{C}
\]
and
\[
\widetilde{Y_2}:=\widetilde{C_j \times Y_2} \cap \widehat{C}.
\]
Notice that, by construction, $\widetilde{Y_1}$ and $\widetilde{Y_2}$ contains all the $\phi$-exceptional components of $\widehat{C}$ contained, respectively, in $\widetilde{C_i \times Y_1}$ and $\widetilde{C_j \times Y_2}$. By Proposition \ref{cor:betas} we see that each $\phi$-exceptional component $E$ contained in $\widetilde{Y_1}$ satisfies $\beta_E(\widetilde{\mathcal{M}}) \in \{0,1\}$. So, by the flatness of $\rho$ and by the definition of $\mathcal{G}$ we have
\begin{equation}
\beta_{Y_1}(\widetilde{\mathcal{M}})=\beta_{\widetilde{Y_1}}(\mathcal{G}). \label{eq1}
\end{equation}
Similarly, each $\phi$-exceptional component $E$ contained in $\widetilde{Y_2}$ satisfies $\beta_E(\widetilde{\mathcal{M}}) \in \{0,1\}$, hence
\begin{equation}
\beta_{Y_2}(\widetilde{\mathcal{M}})=\beta_{\widetilde{Y_2}}(\mathcal{G}). \label{eq2}
\end{equation}

We need to prove the following two claims.

\bigskip
\begin{claim}
Every $\phi$-exceptional component $E$ not contained in the region $\widetilde{C_i \times Y_1}$ and such that $E$ intersects $\widetilde{Y_1}$ satisfies 
\[ \beta_{\widetilde{Y_1} \cup E}(\mathcal{G})=\beta_{\widetilde{Y_1}}(\widetilde{\mathcal{M}}).\]
\end{claim}

Indeed, by the definition of the sheaf $\mathcal{G}$, we have $\beta_E(\mathcal{G}) \in \{ 0,1 \}$, for every $\phi$-exceptional component $E$. If $E$ is such that $\beta_E(\mathcal{G})=1$ then, by Lemma \ref{lem:beta} and by Equation (\ref{eq1}), we have
\[\beta_{\widetilde{Y_1} \cup E}(\mathcal{G})=\beta_{\widetilde{Y_1}}(\mathcal{G})+\beta_E(\mathcal{G}) - 1= \beta_{\widetilde{Y_1}}(\mathcal{G})=\beta_{\widetilde{Y_1}}(\widetilde{\mathcal{M}}).\]

If $E$ is such that $\beta_E(\mathcal{G})=0$ then, since by hypothesis $E \not\subset \widetilde{C_i \times Y_1}$ and $E$ intersects $\widetilde{Y_1}$, it follows that $E \subset \widetilde{C_j \times Y_1}$. By definition of the sheaf $\mathcal{G}$, we have $\beta_E(\widetilde{\mathcal{M}})=2$. So, again by Lemma \ref{lem:beta} and by Equation (\ref{eq1}), we have
\[\beta_{\widetilde{Y_1} \cup E}(\mathcal{G})=\beta_{\widetilde{Y_1}}(\mathcal{G}) - 1= \beta_{\widetilde{Y_1}}(\widetilde{\mathcal{M}})+1-1=\beta_{\widetilde{Y_1}}(\widetilde{\mathcal{M}}).\]

In a similar way we can prove that
\[ \beta_{\widetilde{Y_2} \cup E}(\mathcal{G})=\beta_{\widetilde{Y_2}}(\widetilde{\mathcal{M}}),\]
for every $\phi$-exceptional component $E$ not contained in $\widetilde{C_2 \times Y_2}$ and such that $E$ intersects $\widetilde{Y_2}$. $\hfill \square$

\bigskip
\begin{claim}
For every $Y_k$-lifting $\overline{Y}_k$ contained in $\widetilde{Y_k}$ and $k=1,2$, we have
\[ \beta_{\overline{Y}_k}(\mathcal{G}) \geq \beta_{\widetilde{Y_k}}(\mathcal{G}).\]
\end{claim}

Indeed, by the definition  of the sheaf $\mathcal{G}$, we have $\beta_E(\mathcal{G}) \in \{ 0,1 \}$, for every $\phi$-exceptional component $E$. If $\overline{Y}_i \subset \widetilde{Y_i}$ is a $Y_i$-lifting then,
\[\widetilde{Y_i}=\overline{Y}_i \cup A_1 \cup A_0,\]
where $A_1 \subset \widetilde{Y_k}$ is the set of $\phi$-exceptional components contained in $\widetilde{Y_k}$, not contained in $\overline{Y}_k$ and such that $\beta_E(\mathcal{G})=1$ for every $E \in A_1$ and $A_0 \subset \widetilde{Y_k}$ is the set of $\phi$-exceptional components contained in $\widetilde{Y_k}$, not contained in $\overline{Y}_k$ and such that $\beta_E(\mathcal{G})=0$ for every $E \in A_0$. By Lemma \ref{lem:beta}, we have
\begin{eqnarray*}
\beta_{\widetilde{Y_k}}(\mathcal{G}) & \leq & \beta_{\overline{Y}_k}(\mathcal{G}) + \beta_{A_1}(\mathcal{G}) - \#(A_1 \cap \overline{Y}_k) + \beta_{A_0}(\mathcal{G}) - \#(A_0 \cap \overline{Y}_k) \\
 & = & \beta_{\overline{Y}_k}(\mathcal{G}) + \#A_1 - \#(A_1 \cap \overline{Y}_k) - \#(A_0 \cap \overline{Y}_k),
\end{eqnarray*}
where the above inequality follows from the fact aht each $\phi$-exceptional component contained in either $A_1$ or $A_0$ intersects $\widehat{Y_k}$ in either $1$ or $2$ points. Hence, the integer
\[\#A_1 - \#(A_1 \cap \overline{Y}_k) - \#(A_0 \cap \overline{Y}_k)\]
in the above equation is nonpositive and we conclude that $\beta_{\overline{Y}_k}(\mathcal{G}) \geq \beta_{\widetilde{Y_k}}(\mathcal{G}). \hfill \square$

\bigskip
We can conclude the proof as follows. By Lemma \ref{lem:beta} and by the fact that the sheaf $\widetilde{\mathcal{M}}$ is generically $\sigma$-quasistable, we have
\[\beta_{Y_1 \cup Y_2}(\widetilde{\mathcal{M}})=\beta_{Y_1}(\widetilde{\mathcal{M}})+\beta_{Y_2}(\widetilde{\mathcal{M}})- \#(Y_1 \cap Y_2) \geq 0,\]
with strict inequality if $P \in Y_1 \cup Y_2$. By claims $1$ and $2$, to check that $\mathcal{G}$ is $\widetilde{\sigma}$-quasistable, we can reduce to check the condition for the case $\widehat{Y_k}=\widetilde{Y_k}$, $k=1,2$.

Let $D_0$ and $D_1$ be respectively the sets of $\phi$-exceptional components of $\widehat{C}$ contained in $\widetilde{Y_1} \cap \widetilde{Y_2}$ and such that $\beta_E(\mathcal{G})=0$ for every $E \in D_0$ and $\beta_E(\mathcal{G})=1$ for every $E \in D_1$. We can write
\[\widetilde{Y_1} = \overline{Y}_1 \cup D_0 \cup D_1 \;\text{ and }\; \widetilde{Y_2} = \overline{Y}_2 \cup D_0 \cup D_1,\]
where 
\begin{equation}
\overline{Y}_k=\widetilde{Y_k} \setminus (D_0 \cup D_1), \label{eq3}
\end{equation}
for $k=1,2$. By Lemma \ref{lem:beta} and by Equation (\ref{eq3}), we have
\begin{eqnarray}
\beta_{\widehat{Y}}(\mathcal{G}) & = & \beta_{\widetilde{Y_1} \cup \widetilde{Y_2}}(\mathcal{G}) \nonumber \\
& = & \beta_{\widetilde{Y_1} \cup \overline{Y}_2}(\mathcal{G}) \nonumber \\
& = & \beta_{\widetilde{Y_1}}(\mathcal{G}) + \beta_{\overline{Y}_2}(\mathcal{G}) - \#(\widetilde{Y_1} \cap \overline{Y}_2) \nonumber \\
& \geq & \beta_{\widetilde{Y_1}}(\mathcal{G}) + \beta_{\widetilde{Y_2}}(\mathcal{G}) - \#(\widetilde{Y_1} \cap \overline{Y}_2) \label{eq4}\\
& = & \beta_{\widetilde{Y_1}}(\widetilde{\mathcal{M}}) + \beta_{\widetilde{Y_2}}(\widetilde{\mathcal{M}}) - \#(\widetilde{Y_1} \cap \overline{Y}_2) \nonumber \\
& = & \beta_{Y_1}(\widetilde{\mathcal{M}}) + \beta_{Y_2}(\widetilde{\mathcal{M}}) - \#(\widetilde{Y_1} \cap \overline{Y}_2) \label{eq5}\\
& \geq & \beta_{Y_1}(\widetilde{\mathcal{M}}) + \beta_{Y_2}(\widetilde{\mathcal{M}}) - \#(Y_1 \cap Y_2) \label{eq6}\\
& \geq & 0 \label{eq7}
\end{eqnarray}
where Inequality (\ref{eq4}) follows from Claim 2, Equation (\ref{eq5}) follows from the flatness of the sheaf $\widetilde{\mathcal{M}}$ and Inequality (\ref{eq6}) follows from the fact that  $\#(Y_1 \cap Y_2) \geq \#D_0 + \#D_1$. Note that if $\widehat{P} \in \widehat{Y}$ then Inequality (\ref{eq7}) is strict.

The proof is complete.
\end{proof}

\bigskip

\begin{proposition} \label{pro:sigma}
The sheaf $\phi_{\ast}\widetilde{\mathcal{M}}$ is $\sigma$-quasistable.
\end{proposition}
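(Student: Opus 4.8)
The plan is to derive the $\sigma$-quasistability of $\phi_*\widetilde{\mathcal{M}}$ from the fiberwise $\widehat{P}$-quasistability of $\mathcal{G}$ established in Proposition \ref{pro:quasi}, transporting that statement down through the semistable modification $\mu$ by means of Propositions \ref{pro:stability} and \ref{pro:beta2}. Because $\sigma$-quasistability is a condition on the geometric fibers of $p_1:\mathcal{C}^2\to\mathcal{C}$, and because the formation of $\phi_*\widetilde{\mathcal{M}}$ commutes with base change by Corollary \ref{cor:tor}, the first step is to identify, for each $R\in\mathcal{C}$, the fiber of $\phi_*\widetilde{\mathcal{M}}$ over $R$ with $\mu_*\bigl(\widetilde{\mathcal{M}}|_{\widehat{C}}\bigr)$, where $\widehat{C}=\rho^{-1}(R)$ and $\mu:\widehat{C}\to C$ is the induced semistable modification. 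It then suffices to prove that $\mu_*\bigl(\widetilde{\mathcal{M}}|_{\widehat{C}}\bigr)$ is $P$-quasistable with respect to $\mathcal{E}$ for every fiber, where $P=\mu(\widehat{P})$.

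Next I would pass from $\widetilde{\mathcal{M}}|_{\widehat{C}}$ to the auxiliary bundle $\mathcal{G}=\widetilde{\mathcal{M}}|_{\widehat{C}}\bigl(\sum_i E_i\bigr)$, the sum being over the $\phi$-exceptional components of $\widehat{C}$ with $\beta_{E_i}(\widetilde{\mathcal{M}})=2$. These two line bundles on $\widehat{C}$ differ only by a divisor supported on components contracted by $\mu$, so Proposition \ref{pro:beta2}, applied with $L=\mathcal{G}$ and $M=\widetilde{\mathcal{M}}|_{\widehat{C}}$, yields $\mu_*\mathcal{G}\cong\mu_*\bigl(\widetilde{\mathcal{M}}|_{\widehat{C}}\bigr)$. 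The problem thus reduces to proving that $\mu_*\mathcal{G}$ is $P$-quasistable.

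For the final step I would invoke Proposition \ref{pro:stability} for the semistable modification $\mu:\widehat{C}\to C$ and the marked point $\widehat{P}$, which is smooth and, mapping to the smooth point $P$, lies on no $\mu$-contracted component. By Proposition \ref{pro:quasi}, $\mathcal{G}$ is $\widehat{P}$-quasistable with respect to $\mu^*\mathcal{E}$; the forward implication of Proposition \ref{pro:stability} then gives at once that $\mu_*\mathcal{G}$ is $\mu(\widehat{P})=P$-quasistable with respect to $\mathcal{E}$ (and, as a byproduct, that $\mathcal{G}$ is negatively $\mu$-admissible, consistently with Propositions \ref{lem:RS_igual} and \ref{lem:RS_dif}: adjoining $E_i$ to each exceptional component of degree $1$ drops its degree to $-1$, since such an $E_i$ meets the rest of $\widehat{C}$ in two points). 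Chaining the three steps shows that $\mu_*\bigl(\widetilde{\mathcal{M}}|_{\widehat{C}}\bigr)$ is $P$-quasistable on every fiber, whence $\phi_*\widetilde{\mathcal{M}}$ is $\sigma$-quasistable.

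I expect the genuine difficulty to reside entirely in Proposition \ref{pro:quasi}, which already performs the delicate $\beta$-bookkeeping according to the position of each exceptional component relative to the twister-difference subcurve $Z_{i,j}$; this final assembly is then essentially formal. The only real care needed here is in the matching of data across $\mu$: checking that the polarization $\mu^*\mathcal{E}$ on $\widehat{C}$ is exactly the one with respect to which the $\beta$'s of Proposition \ref{pro:quasi} were computed, that $\widehat{P}$ avoids the contracted locus so that Proposition \ref{pro:stability} applies, and that the twisting divisor $\sum_i E_i$ is supported precisely on $\mu$-contracted components so that Proposition \ref{pro:beta2} applies verbatim.
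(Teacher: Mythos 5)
Your proof is correct and follows essentially the same route as the paper: reduce to fibers via Corollary \ref{cor:tor}, transfer the $\widehat{P}$-quasistability of $\mathcal{G}$ from Proposition \ref{pro:quasi} down through the modification, and identify $\phi_*\mathcal{G}$ with $\phi_*\widetilde{\mathcal{M}}$ via Proposition \ref{pro:beta2}. In fact your citation of Proposition \ref{pro:stability} for the descent of quasistability is the right one (the paper's proof cites Proposition \ref{pro:admissible} at that step, which concerns only torsion-freeness of the pushforward and appears to be a slip).
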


\begin{proof}
By Propositions \ref{lem:RS_igual} and \ref{lem:RS_dif}, the sheaves $\mathcal{G}$ and $\left. \widetilde{\mathcal{M}} \right\vert _{\widehat{C}}$ are $\phi$-admissible. We know that $\mathcal{G}$ is $\widehat{P}$-quasistable, by Proposition \ref{pro:quasi}. So $\phi_{\ast}(\mathcal{G})$ is $\sigma$-quasistable, by Propositions \ref{pro:admissible} and \ref{pro:stability}. By Proposition \ref{pro:beta2}, $\phi_{\ast} \left(\left. \widetilde{\mathcal{M}} \right\vert _{\widehat{C}}\right) \cong \phi_{\ast}(\mathcal{G})$. Then, $\phi_{\ast} \left(\left. \widetilde{\mathcal{M}} \right\vert _{\widehat{C}}\right)$ is $\sigma$-quasistable. Finally, by Corollary \ref{cor:tor}, $\phi_{\ast} \widetilde{\mathcal{M}}$ is $\sigma$-quasistable.
\end{proof}

\bigskip

\begin{theorem}[Main result]\label{teo:main}
The sheaf $\phi_*\widetilde{\mathcal{M}}$ on $p_1: \mathcal{C}^2 \rightarrow \mathcal{C}$ induces a morphism
\[
\overline{\alpha}_{\mathcal{L},\mathcal{E}}:%
\begin{array}
[t]{ccl}%
\mathcal{C} & \rightarrow & \overline{J}_{\mathcal{E}}^{\sigma}\\
Q & \mapsto & \left[  \left.  \phi_{\ast}\widetilde{\mathcal{M}}\right\vert _{p_{1}%
^{-1}(Q)}\right]
\end{array}
\]
restricting to $\alpha_{\mathcal{L},\mathcal{E}}$ over the smooth locus of $f: \mathcal{C} \rightarrow B$.
\end{theorem}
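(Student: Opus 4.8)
The plan is to derive the theorem as a synthesis of the results established in the previous sections, invoking the universal property of the fine moduli space $\overline{J}_{\mathcal{E}}^{\sigma}$. As recalled in Section 2, since $\overline{J}_{\mathcal{E}}^{\sigma}$ is a fine moduli space, giving a morphism $\mathcal{C} \to \overline{J}_{\mathcal{E}}^{\sigma}$ is equivalent to producing a relatively simple, torsion-free, rank-$1$ sheaf on the family $p_1:\mathcal{C}^2 \to \mathcal{C}$ that is $\sigma$-quasistable on the fibers and whose formation commutes with base change. Thus the first step is simply to check that $\phi_*\widetilde{\mathcal{M}}$ meets all of these requirements.

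First I would assemble the properties already proved. By Corollary \ref{cor:tor}, the sheaf $\phi_*\widetilde{\mathcal{M}}$ is relatively torsion-free of rank $1$ and its formation commutes with base change. By Proposition \ref{pro:sigma}, it is $\sigma$-quasistable on the fibers of $p_1$. Finally, as recalled in Section 2, $\sigma$-quasistability forces simplicity (Lemma 1.3.5 of \cite{Co}), so $\phi_*\widetilde{\mathcal{M}}$ is simple on $\mathcal{C}^2/\mathcal{C}$. These are precisely the hypotheses needed, and the universal property of $\overline{J}_{\mathcal{E}}^{\sigma}$ then yields a morphism $\overline{\alpha}_{\mathcal{L},\mathcal{E}}:\mathcal{C} \to \overline{J}_{\mathcal{E}}^{\sigma}$ classifying $\phi_*\widetilde{\mathcal{M}}$. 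The commutation of its formation with base change guarantees that $\overline{\alpha}_{\mathcal{L},\mathcal{E}}$ is given fiberwise by $Q \mapsto [\phi_*\widetilde{\mathcal{M}}|_{p_1^{-1}(Q)}]$.

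It then remains to verify that $\overline{\alpha}_{\mathcal{L},\mathcal{E}}$ agrees with the rational map $\alpha_{\mathcal{L},\mathcal{E}}$ over the smooth locus of $f$, in particular over the generic fiber $\mathcal{C}_K$ where the latter is defined. Here I would use that $\mathrm{Sing}(\mathcal{C}^2)$ consists only of pairs of nodes of $C$, so $\phi$ restricts to an isomorphism over $p_1^{-1}(Q)$ whenever $Q$ is a smooth point; consequently $\phi_*\widetilde{\mathcal{M}}|_{p_1^{-1}(Q)}$ is identified with $\widetilde{\mathcal{M}}$ restricted to the same fiber, namely $\mathcal{I}_{\widetilde{\Delta}/\widetilde{\mathcal{C}^2}} \otimes (p_2\phi)^*\mathcal{L} \otimes \widetilde{\mathcal{T}}$, with $\widetilde{\Delta}=\Delta$ there. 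For $Q \in \mathcal{C}_K$ the fiber $p_1^{-1}(Q)$ is the smooth curve $\mathcal{C}_K$, the restriction of $\mathcal{I}_{\Delta}$ is $m_Q$, and the twister factor $\widetilde{\mathcal{T}}$ is built from the divisors $\widetilde{C_i \times Z_i}$, which are supported over the closed point of $B$ and hence restrict trivially to $p_1^{-1}(Q)$. Therefore $\phi_*\widetilde{\mathcal{M}}|_{p_1^{-1}(Q)} \cong m_Q \otimes \mathcal{L}_K = \alpha_{\mathcal{L},\mathcal{E}}(Q)$, as required.

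Because the genuine difficulty of the construction---namely the torsion-freeness, rank-$1$ property, and $\sigma$-quasistability of $\phi_*\widetilde{\mathcal{M}}$---has already been resolved in Corollary \ref{cor:tor} and Propositions \ref{pro:quasi} and \ref{pro:sigma}, the present theorem is essentially a bookkeeping synthesis. The only genuinely new point requiring care is the comparison over the smooth locus, and within it the observation that $\widetilde{\mathcal{T}}$ becomes trivial on the generic fiber; this is the step I would treat most carefully, although it reduces to noting that every twister divisor is supported over the closed point of $B$.
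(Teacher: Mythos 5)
Your proposal is correct and follows essentially the same route as the paper: it combines Corollary \ref{cor:tor} and Proposition \ref{pro:sigma} with the fine-moduli-space property of $\overline{J}_{\mathcal{E}}^{\sigma}$ to get the morphism, and uses that $\phi$ is an isomorphism away from the exceptional locus to identify the restriction over the generic fiber with $m_Q\otimes\mathcal{L}_K$. You merely spell out two details the paper leaves implicit (simplicity via $\sigma$-quasistability, and the triviality of $\widetilde{\mathcal{T}}$ on $p_1^{-1}(Q)$ for $Q\in\mathcal{C}_K$ because the twister divisors lie over the closed point of $B$), both of which are accurate.
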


\begin{proof}
By Corollary \ref{cor:tor} the s heaf $\phi_{\ast}\widetilde{\mathcal{M}}$ is torsion-free of rank-$1$. By Proposition \ref{pro:sigma} $\phi_{\ast}\widetilde{\mathcal{M}}$ is $\sigma$-quasistable and hence $\overline{\alpha}_{\mathcal{L},\mathcal{E}}$ is a morphism. As $\phi$ is an isomorphism away from the exceptional components, it follows that $\overline{\alpha}_{\mathcal{L},\mathcal{E}}$ extend $\alpha_{\mathcal{L},\mathcal{E}}$. Thus, the proof is complete.
\end{proof}

\bigskip

\noindent \textbf{Acknowledgements.} We would like to thank Marco Pacini, Juliana Coelho and Alex Abreu for introducing us to the subject and for helping us to prepare this article.

\pagebreak

\bibliographystyle{aalpha}

\bigskip

\end{document}